\tikzstyle{vertex}=[ultra thin,circle,fill, minimum size=4pt, inner sep=1pt] 
\tikzset{ultra thick/.style={line width=2pt}} 
\newtheorem{theorem}{Theorem}
\newtheorem{proposition}[theorem]{Proposition}
\newtheorem{lemma}[theorem]{Lemma}
\newtheorem{claim}[theorem]{Claim}
\newtheorem{remark}{Remark}
\newtheorem{problem}{Problem}
\newcommand\eps{\varepsilon}
\renewcommand\le{\leqslant}
\renewcommand\ge{\geqslant}
\newcommand\E{{\mathbb E}}
\newcommand\Var{\operatorname{Var}} 
\renewcommand\Pr{{\mathbb P}}
\newcommand\prob[1]{\Pr\left(#1\right)}
\newcommand{\indic}[1]{\mathbbm{1}_{\{{#1}\}}}
\newcommand\ceil[1]{\lceil #1 \rceil}
\newcommand\Bigceil[1]{\Bigl\lceil#1\Bigr\rceil}
\newcommand\xpar[1]{(#1)}
\newcommand\bigcpar[1]{\bigl\{#1\bigr\}}
\newcommand\bigpar[1]{\bigl(#1\bigr)}
\newcommand\Bigpar[1]{\Bigl(#1\Bigr)}
\newcommand\biggpar[1]{\biggl(#1\biggr)}
\newcommand\Bin{\operatorname{Bin}}
\newcommand{\G}{{\mathbb G}}
\newcommand{\Gnp}{{\mathbb G}_{n,p}} 
\newcommand\cC{\mathcal{C}}
\newcommand\cD{\mathcal{D}}
\newcommand\cE{\mathcal{E}}
\newcommand\cH{{\mathcal H}}
\newcommand\cI{{\mathcal I}}
\newcommand\cP{{\mathcal P}}
\newcommand\cS{{\mathcal S}}
\newcommand\cT{{\mathcal T}}
\newcommand{\fH}{{\mathfrak H}}
\newcommand\whp{whp} 
\newcommand{\e}{{e}} 
\newcommand{\xx}{{\mathbf x}} 
\newcommand{\y}{{\mathbf y}} 
\newcommand{\yy}{{\mathbf y}} 
\newcommand{\ii}{{\mathbf x}}
\newcommand{\aut}{\operatorname{aut}}
\newcommand{\zerost}{$0$-statement}
\newcommand{\onest}{$1$-statement}
\newcommand{\oset}[1]{[n]_{#1}} 
\newcommand{\osets}{\oset{v_G}}
\newcommand{\dex}{z} 
\newcommand{\extcount}[1]{\hat N_{#1}} 
\newcommand{\vr}[2]{{v_{#2}-v_{#1}}}
\newcommand{\er}[2]{{e_{#2}-e_{#1}}}
\newcommand{\vGH}{\vr{G}{H}} 
\newcommand{\eGH}{\er{G}{H}} 
\newcommand{\refT}[1]{Theorem~\ref{#1}}
\newcommand{\refL}[1]{Lemma~\ref{#1}}
\newcommand{\refR}[1]{Remark~\ref{#1}}
\newcommand{\refS}[1]{Section~\ref{#1}}
\newcommand{\refP}[1]{Proposition~\ref{#1}}
\newcommand{\refPr}[1]{Problem~\ref{#1}}
\newcommand{\refF}[1]{Figure~\ref{#1}}
\newcommand{\refApp}[1]{Appendix~\ref{#1}}
\newcommand{\refCl}[1]{Claim~\ref{#1}}
\newcommand\noproof{\qed}
\newenvironment{romenumerate}[1][-0.25em]{
\vspace{-0.25em}
\vspace{#1}\begin{enumerate}
\itemsep0pt \parskip0pt \parsep0pt%
 }{\end{enumerate}\vspace{-0.75em}}
\newenvironment{romenumerate2}[1][-0.25em]{
\vspace{#1}\begin{enumerate}
\itemsep0pt \parskip0pt \parsep0pt%
 }{\end{enumerate}\vspace{-0.05em}}
\newcounter{case}
\newcounter{thmenumerate}
\let\OLDthebibliography\thebibliography
\renewcommand\thebibliography[1]{
  \OLDthebibliography{#1}
  \setlength{\parskip}{0pt}
  \setlength{\itemsep}{0pt plus 0.3ex}
}
\begin{document}
\title{Counting extensions revisited} 
\author{Matas {\v{S}}ileikis\thanks{Institute of Computer Science of the Czech Academy of Sciences, Pod Vod\'arenskou v\v{e}\v{z}\'{i} 2, 182~07~Prague, Czech Republic. 
E-mail: {\tt matas.sileikis@gmail.com}. With institutional support RVO:67985807. Research supported by the Czech Science Foundation, grant number GA19-08740S.} \ 
 and 
Lutz Warnke\thanks{School of Mathematics, Georgia Institute of Technology, Atlanta GA~30332, USA.
E-mail: {\tt warnke@math.gatech.edu}. Research partially supported by NSF Grant DMS-1703516 and a Sloan Research Fellowship.}}
\date{7 November 2019; revised 28 July 2021} 
\maketitle

\begin{abstract}
We consider rooted subgraphs in random graphs, i.e., extension counts such as 
(i)~the number of triangles containing a given~vertex or (ii)~the number of paths of length three connecting two given~vertices. \linebreak[3] 
In~1989, Spencer gave sufficient conditions for the event that, with high probability, these extension counts are asymptotically equal for all choices of the root~vertices.  
For the important strictly balanced case, Spencer also raised the fundamental question as to whether these conditions are necessary. 
We answer this question by a careful second moment argument, and discuss some intriguing problems that remain~open. 
\end{abstract}


\section{Introduction}
Subgraph counts and their many natural generalizations are central topics in random graph theory: 
since the~1960's they are a constant source of beautiful problems and~conjectures, 
which have repeatedly inspired the development of important new probabilistic~techniques and~insights (see~\cite{BB,AS,JLR,FK}).

In this paper we consider \mbox{rooted subgraph counts} in the binomial random graph~$\Gnp$, 
i.e., so-called \linebreak[4] \mbox{extension counts}~\cite{SS1988,S90b,LS1991,Vu2001}  
such as 
(i)~the number of triangles containing a given~vertex  
or (ii)~the number of paths of length three connecting two given~vertices. 
In combinatorics and related areas, the need for studying such extension~counts 
arises frequently in probabilistic proofs and applications, 
including zero-one~laws in random graphs~\cite{SS1988,LS1991,S2001}, 
games on random graphs~\cite{LP2010,N2017}, 
random graph processes~\cite{bohman2010early,BFL2015, bohman2013dynamic, pontiveros2013triangle, BW2018}, 
sparse random analogues of classical extremal and Ramsey results~\cite{NS2017,SS2018,BK2019}, 
and many more, such as~\cite{S90a,R92,Vu2001,ST2002,YR2007,JR11,spohel2013general,M2015,TBD}. 
Consequently the investigation of extension~counts is not only a natural problem in probabilistic combinatorics, 
but also an important issue from the applications point of~view.


After initial groundwork of Shelah and Spencer~\cite{SS1988} as well as Spencer~\cite{S90a} on (rooted~subgraph)  extension counts, 
in~1989 Spencer~\cite{S90b} proved sufficient conditions for the event that, with high probability\footnote{As usual, we say that an event holds~\emph{whp} (with~high~probability) if it holds with probability tending to~$1$ as~$n\to \infty$.}, 
these extension counts are asymptotically equal in~$\Gnp$ for all choices of the root~vertices.  
For the important strictly balanced case, he also raised the fundamental question whether these sufficient conditions (see~\eqref{eq_Spencer_SB}~below) are qualitatively necessary. 
In this paper we answer Spencer's 30-year old question by a careful second moment argument (see~Theorem~\ref{thm_strictly_balanced}~below),  
rectifying a surprising gap in the random graph literature.  
We also discuss some further partial results and intriguing open~problems (see~Sections~\ref{ss_partial}--\ref{sec:intro:general}~below).

\subsection{Main result}\label{sec:intro:main} 
To fix notation, by a \emph{rooted graph}~${(G,H)}$ 
we mean a graph~${H=(V(H),E(H))}$ and an induced subgraph~${G \subseteq H}$ 
with labeled `root' vertices~${V(G)=\{1, \ldots, v_G\}}$. 
Given a tuple~${\xx =(x_1, \ldots, x_{v_G})}$ of distinct vertices from some `host' graph, 
a~\emph{$(G,H)$-extension} of $\xx$ is a copy of the graph~${H_G:= (V(H), E(H)\setminus E(G))}$ in which each vertex~${j \in V(G)}$ is mapped onto~$x_j$. 
Note that if~$\xx$ spans a copy of~$G$ in the host graph (i.e.,~if the function~${j \mapsto x_j}$ maps edges of $G$ to edges in the host graph),  
then every~$(G,H)$-extension of~$\xx$ corresponds to a copy of~$H$.
Since the edges between root vertices do not affect the definition of a~$(G,H)$-extension, the reader may without loss of generality assume that~$V(G)$ is an independent set~of~$H$ in the results below, cf.~\cite{JLR,JR11} (allowing for~$G$ that are not independent will be convenient in some proofs, though). 
For brevity, we write~$\osets$ for the set of all \emph{roots}, i.e., tuples~$\xx = (x_1, \dots, x_{v_G})$ of distinct vertices from~$[n] := \left\{ 1, \dots, n \right\}$.  
Let~$X_{\xx} = X_{G,H}(\xx)$ denote the number of~$(G,H)$-extensions of~$\xx$ in the binomial random graph~$\Gnp$. 
Note that the expected~value 
\begin{equation}\label{def:muGH}
  \mu=\mu_{G,H} := \E X_{\xx} \asymp n^{v_H-v_G}p^{e_H-e_G}
\end{equation}
does not depend\footnote{Here~$a_n \asymp b_n$ is a convenient shorthand for~$a_n = \Theta(b_n)$, following standard asymptotic notation as in~\cite[p.~9]{JLR}.} on the particular choice of~$\xx$.   
To avoid trivialities, we henceforth assume that~$H$ has more edges than~$G$, i.e., that~$e_H>e_G$. 
Extending the standard density notation for unrooted subgraphs, we define
\begin{equation}\label{def:mGH}
m(G,H) := \max_{G \subsetneq J \subseteq H} d(G,J) \quad \text{ with } \quad d(G,J):=\frac{e_J-e_G}{v_J-v_G},
\end{equation}
and say that~$(G,H)$ is \emph{strictly~balanced} if~$d(G,J) < d(G,H)$ for all~$G \subsetneq J \subsetneq H$. 
We also call~$(G,H)$~\emph{grounded} if at least one root vertex~$j \in V(G)$ is connected to a non-root vertex~$w \in V(H) \setminus V(G)$.

Spencer derived in~1989 sufficient conditions for the event that, with high probability, 
all extension counts satisfy~$X_{\xx} \sim \mu$, i.e., are asymptotically equal. 
In the important case when~$(G,H)$ is strictly balanced, 
\mbox{\cite[Theorem~2]{S90b}} states that for every fixed~$\eps \in (0,1]$ there is a constant~$K(\eps) > 0$ such that 
\begin{equation}\label{eq_Spencer_SB}
	\lim_{n \to \infty} \Pr\Bigpar{\max_{\xx \in \osets}|X_\xx - \mu| < \eps\mu} = 1 \quad \text{ if } \mu \ge K(\eps) \log n. 
\end{equation}
%
Spencer remarked that 
 his constant satisfies~$K(\eps) \to \infty$ as~$\eps \to 0$, 
and speculated that this is probably also necessary, see~\cite[Remark~on~p.249]{S90b}. 
In other words, he raised the question whether his sufficient condition is qualitatively 
best possible. 

Our main result answers this fundamental question: 
\eqref{eq:main:strbal}~shows that the `correct' dependence is~$K(\eps)=\Theta(\eps^{-2})$ in the grounded case, even when~$\eps=\eps(n) \to 0$ at some polynomial~rate.
For completeness, \eqref{eq:main:strbal:non}~also shows that the logarithm in the sufficient condition~\eqref{eq_Spencer_SB} is~unnecessary in the less interesting ungrounded~case (where extension counts are essentially unrooted subgraph counts, cf.~example~(b) in~\refF{fig_primal}). 
\begin{theorem}[Main result: strictly balanced case]\label{thm_strictly_balanced}
Let~$(G,H)$ be a rooted graph that is strictly balanced. 
There are constants~$c, C, \alpha > 0$ such that, for all~$p=p(n) \in [0,1]$ and~$\eps=\eps(n) \in [n^{-\alpha},1]$, the following~holds: 
\begin{romenumerate2}
\item
If the rooted graph~$(G,H)$ is grounded, then 
\begin{align}
\label{eq:main:strbal}
\vspace{-0.125em}
	\lim_{n \to \infty} \Pr\Bigpar{\max_{\xx \in \osets}|X_\xx - \mu| < \eps\mu}  &= 
  \begin{cases}
	  0 & \text{if $\eps^2\mu \le c \log n$,} \\
    1 & \text{if $\eps^2\mu \ge C \log n$.}
  \end{cases}\hspace{2.5em}\vspace{-0.125em}
\intertext{\item
If the rooted graph~$(G,H)$ is not grounded, then}
\label{eq:main:strbal:non}
	\vspace{-0.25em}
	\lim_{n \to \infty} \Pr\Bigpar{\max_{\xx \in \osets}|X_\xx - \mu| < \eps\mu}  &= 
  \begin{cases}
	  0 & \text{if $\eps^2\mu \to 0$,} \\
    1 & \text{if $\eps^2\mu \to \infty$.}
  \end{cases}\hspace{2.5em}\vspace{-0.125em}%
\end{align}%
\end{romenumerate2}\vspace{-0.125em}%
\end{theorem}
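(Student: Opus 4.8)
The plan is to prove the cases where the probability tends to $1$ by a union bound over the roots, and the cases where it tends to $0$ by a second moment argument over the roots; the latter is the substantive part. Throughout, the constant $\alpha>0$ is chosen small relative to the combinatorial data of $(G,H)$, so that in the $0$-statement range the hypotheses $\eps\ge n^{-\alpha}$ and $\eps^2\mu\le c\log n$ together force $\mu\le cn^{2\alpha}\log n$; by strict balance this puts us in the regime where $\Var(X_\xx)\asymp\mu$ and $X_\xx$ obeys a central limit theorem, which is what makes the clean thresholds correct.

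For the $1$-statement I would bound $\Pr(\max_\xx|X_\xx-\mu|\ge\eps\mu)\le n^{v_G}\max_\xx\Pr(|X_\xx-\mu|\ge\eps\mu)$ and show each individual tail is at most $n^{-v_G-1}$ once $C$ is large. For the lower tail Janson's inequality gives $\Pr(X_\xx\le(1-\eps)\mu)\le\exp(-\eps^2\mu^2/(2(\mu+\Delta)))$, and strict balance bounds $\Delta\le\mu\sum_{G\subsetneq J\subseteq H}n^{v_H-v_J}p^{e_H-e_J}$; since $d(J,H)\ge d(G,H)$ for such $J$ one has $n^{v_H-v_J}p^{e_H-e_J}\le\mu^{1-1/(v_H-v_G)}$, so $\mu+\Delta=O(\mu)$ as long as $\mu$ stays below a fixed power $n^{\beta_0}$ (when all these terms are $O(1)$), while for $\mu\ge n^{\beta_0}$ the bound $\eps\ge n^{-\alpha}$ makes $\eps^2\mu^2/(\mu+\Delta)\ge\eps^2\mu^{1/(v_H-v_G)}$ a positive power of $n$; either way the lower-tail probability is $n^{-\omega(1)}$. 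The upper tail is handled in the same spirit with an upper-tail concentration inequality (the deletion method, or Janson applied to a suitable truncation), the relevant large-deviation rate again being $\gg\log n$ under $\eps^2\mu\ge C\log n$ and $\eps\ge n^{-\alpha}$. In the ungrounded case the extensions avoid $V(G)$, so $X_\xx$ equals the count of a fixed strictly balanced subgraph up to a lower-order correction, and the same estimates with the classical threshold $\eps^2\mu\to\infty$ give~\eqref{eq:main:strbal:non}.

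For the $0$-statement I work with the lower tail: set $\cA_\xx:=\{X_\xx\le(1-\eps)\mu\}$ and $N:=\sum_\xx\indic{\cA_\xx}$, and aim for $\Pr(N=0)\le\Var(N)/(\EE N)^2\to0$. By the central limit theorem (and, when $\eps^2\mu\to\infty$, a matching large-deviation lower bound) $\Pr(\cA_{\xx_0})=\exp(-\Theta(\eps^2\mu))$ or $\Theta(1)$ according as $\eps^2\mu\to\infty$ or $\eps^2\mu=O(1)$; in particular $\Pr(\cA_{\xx_0})\ge n^{-O(c)}$, so $\EE N\asymp n^{v_G}\Pr(\cA_{\xx_0})\to\infty$ once $c$ is small. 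Splitting $\EE N^2=\sum_{\xx,\y}\Pr(\cA_\xx\cap\cA_\y)$ by the number $k\ge0$ of shared root vertices, the $O(n^{2v_G-1})$ pairs with $k\ge1$ contribute $o((\EE N)^2)$, by the trivial bound $\Pr(\cA_\xx\cap\cA_\y)\le\Pr(\cA_{\xx_0})$ together with $n\Pr(\cA_{\xx_0})\ge n^{1-O(c)}\to\infty$. So everything reduces to the near-independence $\Pr(\cA_\xx\cap\cA_\y)\le(1+o(1))\Pr(\cA_\xx)\Pr(\cA_\y)$ for disjoint roots $\xx,\y$.

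This near-independence is the crux, and I would prove it by a conditioning (decoupling) argument. Fix $\eps':=n^{-2\alpha}$ (so $\eps'\eps\mu\to0$, using $\eps\mu\le n^{\alpha}\log n$), and split $X_\xx=X_\xx^{\mathrm g}+X_\xx^{\mathrm b}$ where $X_\xx^{\mathrm b}$ counts the extensions of $\xx$ meeting $V(\y)$. Since $\EE X_\xx^{\mathrm b}=O(\mu/n)$, Markov's inequality gives $X_\xx^{\mathrm b}\le\eps'\mu$ off an event of probability $o(\Pr(\cA_{\xx_0}))$, and, combined with the stability of $\Pr(\cA_\xx)$ under an $\eps'\mu$-shift of its threshold, this lets us replace $X_\xx,X_\y$ by $X_\xx^{\mathrm g},X_\y^{\mathrm g}$ at the cost of a $1+o(1)$ factor. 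The point is that, conditional on the $\sigma$-field $\cF$ generated by the edges inside $[n]\setminus(V(\xx)\cup V(\y))$, the variable $X_\xx^{\mathrm g}$ is a function only of the edges from $V(\xx)$ into $[n]\setminus(V(\xx)\cup V(\y))$, $X_\y^{\mathrm g}$ of the disjoint analogous edge-set at $V(\y)$, and the two are conditionally independent; hence $\Pr(X_\xx^{\mathrm g}\le(1-\eps)\mu,\,X_\y^{\mathrm g}\le(1-\eps)\mu)=\EE[g_\xx(\cF)\,g_\y(\cF)]$ with $g_\xx(\cF):=\Pr(X_\xx^{\mathrm g}\le(1-\eps)\mu\mid\cF)$, the $g_\xx,g_\y$ being identically distributed by the $\xx\leftrightarrow\y$ symmetry. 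The main obstacle is to show $g_\xx$ is essentially constant, $g_\xx(\cF)=(1+o(1))\EE g_\xx$ for $\cF$ outside an event of probability $o(\EE g_\xx)$, which then gives $\EE[g_\xx g_\y]=(1+o(1))(\EE g_\xx)^2=(1+o(1))\Pr(\cA_\xx)\Pr(\cA_\y)$. For this one needs: (i)~that the conditional mean and variance of $X_\xx^{\mathrm g}$ are $\mu(1+o(\eps'))$ and $(1+o(1))\Var(X_\xx)$ off an $n^{-\omega(1)}$-probability set of $\cF$ --- their fluctuations are lower-order subgraph counts, of size $o(\eps'\mu)$ and $o(\Var(X_\xx))$ respectively, precisely because $\alpha$ is small; and (ii)~a moderate/large-deviation asymptotic for the conditional lower tail of $X_\xx^{\mathrm g}$ (a sum of mildly dependent indicators) that is accurate to relative error $o(1/(\eps^2\mu))$ in the exponent, uniformly over the typical $\cF$. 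Finally the boundary cases are disposed of directly: if $\mu$ is not within $\eps\mu$ of an integer the event in~\eqref{eq:main:strbal}--\eqref{eq:main:strbal:non} is empty, and if $\mu=O(1)$ --- so $p\to0$ polynomially --- the extensions are sparse enough that a Poisson-type coupling yields $\Pr(\forall\xx:|X_\xx-\mu|<\eps\mu)\to0$.
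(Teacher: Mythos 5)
Your overall architecture (union bound for the $1$-statements, second moment over roots for the $0$-statements, trivial disposal of the boundary cases) matches the paper, and your $1$-statement sketch is essentially the paper's argument (Janson for the lower tail; for the upper tail the paper needs a ``missing log''-free inequality of Warnke rather than the generic deletion method, but that is a citation-level issue). The substantive divergence, and the problem, is in the $0$-statement. You work with the lower-tail events $\cA_\xx=\{X_\xx\le(1-\eps)\mu\}$ and reduce everything to the near-independence $\Pr(\cA_\xx\cap\cA_\y)\le(1+o(1))\Pr(\cA_\xx)\Pr(\cA_\y)$, which you propose to obtain by conditioning on the edges away from $\xx\cup\y$ and showing that $g_\xx(\cF)=\Pr(X_\xx^{\mathrm g}\le(1-\eps)\mu\mid\cF)$ is constant up to a $1+o(1)$ factor. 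Your step~(ii) --- a conditional moderate-deviation asymptotic for $X_\xx^{\mathrm g}$ accurate to relative error $o(1/(\eps^2\mu))$ in the exponent, uniformly over typical $\cF$ --- is exactly the hard part of the whole problem, and you assert it without any argument. In the critical regime $\eps^2\mu\asymp\log n$ this amounts to computing a polynomially small tail probability of a sum of dependent indicators to a multiplicative $1+o(1)$, conditionally and uniformly; no such estimate is available off the shelf for general strictly balanced $(G,H)$ (even the unconditional matching lower bound on $\Pr(\cA_\xx)$, which you also invoke, is nontrivial). This is precisely the ``tail probabilities are hard to estimate up to $1+o(1)$ factors'' obstacle that the paper identifies and deliberately sidesteps.

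The paper's route avoids your step~(ii) entirely: instead of the tail event $\cA_\xx$ it uses the auxiliary event $\cE_\xx$ that $\xx$ has \emph{exactly} $\lceil(1+\eps)\mu\rceil$ pairwise vertex-disjoint $(G,H)$-extensions and no others. Because this event is described by an explicit configuration, its probability can be computed to a $1+o(1)$ factor by elementary counting plus Harris' inequality (matching a binomial point probability, evaluated by a local limit theorem), and the joint probability $\Pr(\cE_{\xx_1},\cE_{\xx_2})$ can be bounded by Harris/Janson together with the overlap estimates coming from strict balance and groundedness. If you want to salvage your decoupling approach you would need to supply a proof of the uniform conditional tail asymptotic in (ii) --- which for $\eps^2\mu\to\infty$ is a genuinely open-ended technical project --- or replace $\cA_\xx$ by a more rigid event of the paper's type whose probability is computable to the required precision. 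As it stands, the $0$-statement is not proved.
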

\noindent
In concrete words, \eqref{eq:main:strbal}--\eqref{eq:main:strbal:non} of~\refT{thm_strictly_balanced} give 
thresholds for the concentration of extension counts in terms of~$\eps^2\mu$, 
similar to the thresholds in terms of the edge probability~$p$ that are well-known 
for many properties of~$\Gnp$. 
The role of the expression~$\eps^2 \mu$ in~\eqref{eq:main:strbal}--\eqref{eq:main:strbal:non} 
can be made plausible by pretending that $X_\xx$ behaves like a binomial random variable with expectation~$\mu$ (the actual behaviour is of course more involved), 
in which case Chernoff-type tail bounds of the form~$\Pr(|X_\xx - \mu| \ge \eps \mu) \le \e^{-\Omega(\eps^2\mu)}$ hold.
Indeed, considering the union bound over the~$\Theta(n^{v_G})$ roots~$\xx$, it then seems plausible that the $1$-statement follows when~$\eps^2\mu$ is at least a large enough multiple of~$\log n$. 
An intuitive reason why the~$\log n$ factor is absent in the ungrounded threshold~\eqref{eq:main:strbal:non} is that here the~$X_{\xx}$ are strongly correlated and in fact almost equal (e.g., in example~(b) from~\refF{fig_primal} each~$X_{\xx}$ is well-approximated by the total number of triangles), so there should be no need to use a union~bound.

The main contribution of \refT{thm_strictly_balanced} 
is the $0$-statement in the grounded threshold~\eqref{eq:main:strbal}, 
which was missing in previous work: 
our proof uses a careful second moment argument 
(combining correlation inequalities and counting arguments with Janson's inequality) 
in order to establish that, with high probability, 
there exists a root~$\xx$ with~${X_\xx \ge (1+\eps)\mu}$, i.e., with too~many $(G,H)$-extensions. 
This is closely related to the task of obtaining good lower bounds on ${\Pr(X_\xx \ge (1+\eps)\mu)}$, which are not so well understood as upper bounds; 
see~\cite{JR2002,JW,Ch19,SW18}. 
To sidestep this conceptual obstacle, in \refS{s_strictly_balanced} we therefore work with (easier to estimate)   
auxiliary events that enforce~${X_\xx \ge (1+\eps)\mu}$ via `disjoint' extensions, 
and we believe that our approach might also be useful for establishing `lower bounds' in other~problems.

\pagebreak[3]

\subsection{Partial results: beyond the strictly balanced case}\label{ss_partial}
We also establish some threshold results for extension counts of rooted graphs~$(G,H)$ that are not necessarily strictly balanced. 
Here things are more complicated, since we now need to take into account all subgraphs~${J \subseteq H}$ containing the root~$G$, 
in particular those that satisfy~$d(G,J) = m(G,H)$; cf.~\cite{S90a,S90b,R92,JLR}. 
We call such subgraphs~$J$~\emph{primal}, 
and for brevity also say that~$J$~is~\emph{grounded} if~$(G,J)$~is~grounded. 
The partial results Theorems~\ref{thm_unique}--\ref{thm_nogrounded} below cover all strictly balanced~$(G,H)$, 
and they in particular imply that \refT{thm_strictly_balanced} also holds with~$\eps^2\Phi$ instead of~$\eps^2\mu$ (possibly after modifying the constants~$c,C,\alpha$), where 
\begin{equation}\label{eq_PhiGH}
	\Phi = \Phi_{G,H} := \min_{G \subseteq J \subseteq H :  e_J > e_G} \mu_{G,J}.
\end{equation}
There is no contradiction here: the extra assumption~$\eps \ge n^{-\alpha}$ ensures that the conclusions of the {$0$-}~and {$1$-statements} of \refT{thm_strictly_balanced} coincide regardless of whether we use~$\eps^2\Phi$ or~$\eps^2\mu$ (cf.~\refS{sec:ext:non}).   
It thus comes as no surprise that in our main result \refT{thm_strictly_balanced} the technical assumption~$\eps \ge n^{-\alpha}$ is indeed\footnote{For examples~(a) and~(b) from \refF{fig_primal} with~$\eps \asymp n^{-1/2}$ and~$\eps \asymp n^{-1}$, when~$p \asymp n^{-1/4}$ it is routine to check that~$\Phi \to \infty$, $\eps^2\Phi \to 0$ and~$\eps^2 \mu \gg \log n$ in both cases. Hence the~$0$-statement holds by~\eqref{eq:general} of \refT{thm_general}, showing that \eqref{eq:main:strbal}--\eqref{eq:main:strbal:non} of~\refT{thm_strictly_balanced}~fail.} 
necessary. 
%

The following result covers the case where $(G,H)$ has only one primal subgraph which also happens to be grounded, such as in examples~(a) and~(c) from~\refF{fig_primal};  
this case includes the rooted graphs in~\refT{thm_strictly_balanced}~(i) since in that case~$H$ is a unique primal subgraph. 
%
\begin{theorem}[Unique and grounded primal case]\label{thm_unique}%
Let~$(G,H)$ be a rooted graph with a unique primal subgraph~$J$. 
If~$(G,J)$ is grounded, then there are constants $c, C, \alpha > 0$ such that,  
for all~$p=p(n) \in [0,1]$ and $\eps=\eps(n) \in [n^{-\alpha},1]$, 
\begin{equation}\label{eq:thm:unique}
	\lim_{n \to \infty} \Pr\Bigpar{\max_{\ii \in \osets} |X_{\ii} - \mu| < \eps\mu} = 
  \begin{cases}
	  0 &\text{if } \eps^2 \Phi \le c \log n, \\
		1 &\text{if } \eps^2 \Phi \ge C \log n.
	\end{cases}\vspace{-0.125em}%
\end{equation}%
\end{theorem}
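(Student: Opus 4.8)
The plan is to treat the $1$-statement (concentration for $\eps^2\Phi\ge C\log n$) and the $0$-statement (non-concentration for $\eps^2\Phi\le c\log n$) separately; the former is fairly standard, while the latter is the substantial half and is where the ``disjoint extensions'' device is needed. Write $\phi=\Phi_{G,H}$ and fix a subgraph $J'$ with $G\subseteq J'\subseteq H$, $e_{J'}>e_G$, and $\mu_{G,J'}=\phi$ (so $\phi\le\mu$); this $J'$ is the ``bottleneck''.

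\textit{The $1$-statement.} I would first establish, for each individual root $\xx$, a two-sided tail bound $\Pr\bigpar{|X_\xx-\mu|\ge\eps\mu}\le\exp(-\Omega(\eps^2\phi))$, after which a union bound over the $\Theta(n^{v_G})$ roots gives the claim once $\eps^2\phi\ge C\log n$ with $C$ large (no assumption on $\eps$ beyond $\eps\le 1$ is needed for this direction). For the lower tail, apply Janson's inequality to $X_\xx$ written as a sum of edge-indicators over potential $(G,H)$-extensions of $\xx$: the relevant parameter $\Delta=\sum^{*}\Pr(\text{both present})$ (sum over overlapping pairs of extensions) satisfies $\Delta\asymp\mu^2/\phi$, because pairs sharing a copy of a rooted subgraph $(G,J'')$ contribute $\asymp\mu^2/\mu_{G,J''}$ and $\phi=\min_{J''}\mu_{G,J''}$ makes $\mu^2/\phi$ the dominant term (it also dominates the diagonal contribution $\mu$, as $\phi\le\mu$); hence $\Pr\bigpar{X_\xx\le(1-\eps)\mu}\le\exp\bigpar{-\Omega(\eps^2\mu^2/\Delta)}=\exp(-\Omega(\eps^2\phi))$. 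For the upper tail I would induct over the poset of subgraphs $G\subseteq J''\subseteq H$: conditionally on the event (which holds whp by the inductive hypothesis and a union bound) that all ``smaller'' extension counts are within a $(1\pm\eps')$-factor of their means, $X_\xx$ becomes a sum of indicators with suitably bounded conditional dependence, to which a Chernoff/Janson-type upper-tail estimate of the form $\exp(-\Omega(\eps^2\phi))$ applies; this is the mechanism underlying Spencer's~\eqref{eq_Spencer_SB}, and alternatively the $1$-statement can be quoted from a refinement of Spencer's general sufficient condition.

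\textit{The $0$-statement.} Assume $\eps^2\phi\le c\log n$ with $c$ small; the goal is to show that whp some root $\xx$ satisfies $X_\xx\ge(1+\eps)\mu$. The heuristic is that the cheapest way to create a surplus of $\eps\mu$ full extensions of $\xx$ is to over-populate the bottleneck: if the number of $(G,J')$-extensions of $\xx$ exceeds its mean $\phi$ by a factor $1+2\eps$ while the completions of these copies to $(G,H)$-extensions are at least as numerous as typical, then $X_\xx\ge(1+\eps)\mu$; and producing such a surplus of $(G,J')$-extensions has probability on the order of $\exp(-\Theta(\eps^2\phi))$. To avoid the well-known difficulty of directly lower-bounding upper tails such as $\Pr\bigpar{X^{(G,J')}_\xx\ge(1+2\eps)\phi}$, I would follow the route indicated in \refS{s_strictly_balanced} and attach to each root $\xx$ an auxiliary event $A_\xx$ which: (i) asserts that a prescribed collection of ``disjoint'' potential $(G,J')$-extensions of $\xx$ are all present --- disjoint enough that this is a conjunction of independent edge-events --- together with the requirement that the relevant completions are present in at least their typical numbers, so that $A_\xx$ forces $X_\xx\ge(1+\eps)\mu$; (ii) has $\Pr(A_\xx)\ge\exp(-O(\eps^2\phi))$; and (iii) depends only on edges ``near'' $\xx$, using that $(G,J')$ is grounded. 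One then applies Janson's inequality to the family $\{A_\xx\}_{\xx\in\osets}$: since $\sum_\xx\Pr(A_\xx)\ge n^{v_G}\exp(-O(\eps^2\phi))\to\infty$ (this is where $c$ is taken small relative to $v_G$ and the hidden constant, and where $\eps\ge n^{-\alpha}$ with $\alpha$ small keeps $\eps^2\phi$ and $\eps^2\mu$ on the same side of the threshold), it suffices to show the clustering term $\Delta_A=\sum^{*}\Pr(A_\xx\wedge A_{\xx'})$, the sum over ordered pairs of distinct roots with dependent auxiliary events, is $O\bigpar{\sum_\xx\Pr(A_\xx)}$; this yields $\Pr\bigpar{\bigcup_\xx A_\xx}\to1$ and hence the $0$-statement.

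\textit{Main obstacle.} The crux is the clustering bound $\Delta_A=O(\sum_\xx\Pr(A_\xx))$ --- equivalently the near-independence of the auxiliary events of distinct roots --- together with designing the $A_\xx$ so that (i)--(iii) hold simultaneously. This is exactly where the two structural hypotheses are used: \emph{groundedness} of the primal forces every relevant extension to attach an edge at a root, so the disjoint copies witnessing $A_\xx$ genuinely radiate away from $\xx$ and two roots sharing few vertices have almost independent auxiliary events; \emph{uniqueness} of the primal means there is a single densest subgraph to track, so the count of ``dangerous'' joint configurations of a pair $(\xx,\xx')$ is governed by one clean extremal quantity rather than by a competition between several subgraphs. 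Executing this requires correlation inequalities (FKG/Harris to decouple the ``copies present'' and ``completions typical'' components of $A_\xx$, and to treat monotone sub-events inside Janson's bound) together with careful counting of the joint $(G,J')$-extension structures of pairs of roots; the constant $\alpha$ in the hypothesis $\eps\ge n^{-\alpha}$ must be chosen small in terms of $(G,H)$ so that all polynomially small error terms --- and the gap between $\phi$ and $\mu$ --- stay negligible.
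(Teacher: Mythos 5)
Your $0$-statement plan is essentially the paper's: the paper also reduces to the bottleneck, which under the hypotheses is the unique primal $J$ (so that $(G,J)$ is strictly balanced and grounded, and $\Phi_{G,H}=\mu_{G,J}$ in the relevant range of $p$, cf.~\eqref{eq:mu:minimal}), runs the disjoint-extensions second-moment argument there (Lemma~\ref{lem:main}), and separately shows that every copy of $J$ completes to the typical number of copies of $H$. The only organizational difference is that the paper proves \refT{thm_strictly_balanced}~(i) for $(G,J)$ as a standalone statement and composes it with the $(J,H)$-concentration~\eqref{eq:JHconc}, rather than bundling both requirements into one auxiliary event $A_\xx$; your bundled version should work but note that $A_\xx$ is not monotone, so the final step is Chebyshev (as in the paper) rather than Janson.

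The $1$-statement, however, has a genuine gap. Your argument rests on a per-root upper-tail bound $\Pr\bigpar{X_\xx\ge(1+\eps)\mu}\le\exp(-\Omega(\eps^2\Phi))$, justified by ``condition on smaller extension counts being typical and apply a Chernoff/Janson-type estimate.'' This bound is exactly the speculation that \refP{prop:counterexample:1} refutes: for the rooted graph~(f) one has $\eps^2\Phi\gg\log n$ yet whp some root exceeds $(1+\eps)\mu$, so no such uniform tail bound can follow from generic poset-induction. The obstruction is the top layer of the decomposition: under the hypothesis $\eps^2\Phi\ge C\log n$ the link expectation $\mu_{J,H}\asymp\mu_{G,H}/\mu_{G,J}$ is \emph{not} bounded below by $\Phi$ and can be far below $\log n$ (this is the situation of \refP{prop:counterexample:2}), so after conditioning on the $(G,J)$-extensions the completion counts are sums of few, heavily reused indicators for which neither Chernoff nor Spencer's condition $\mu_{J,H}\ge K(\eps)\log n$ applies. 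The paper circumvents this by never taking an upper tail of the $(J,H)$-layer at all: it proves $\max_{\y}|Y_{J,H}(\y)-\nu_{J,H}|<\tfrac12\eps\nu_{J,H}$ via the moment-method bound \refT{thm_generaltail}~\ref{thm_tail1}, which needs $\eps^2\Phi_{J,H}=n^{\Omega(1)}$; that in turn requires \refL{lem:nogroundeddensity} (that $m(J,H)<m(G,H)$, so $p$ sits polynomially above the $(J,H)$-threshold) \emph{and} the hypothesis $\eps\ge n^{-\alpha}$. Your claim that ``no assumption on $\eps$ beyond $\eps\le1$ is needed'' for the $1$-statement is therefore unsupported (and whether it is even true is raised as an open problem in \refS{sec:intro:general}). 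To repair your argument you would need to restrict the union-bound-plus-tail-estimate step to the strictly balanced pair $(G,J)$ near its threshold (where an inequality such as \cite[Theorem~32]{WUT} applies) and treat the $(J,H)$-layer by the moment method as above.
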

\noindent
The heuristic idea is that the main contribution to deviations of~$X_\xx=X_{G,H}(\xx)$ comes from those of~$X_{G,J}(\xx)$, 
and, since~$(G,J)$ is strictly balanced and grounded, the problem thus intuitively reduces to \refT{thm_strictly_balanced}~(i).

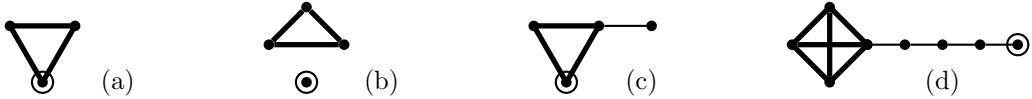
\begin{figure}
  \begin{center}
	  \hspace*{\fill}
    \begin{tikzpicture}[thick,scale=1]
	\foreach \x in {1, 2, ..., 3}
	{
		\draw[ultra thick] (-120*\x+30:0.5) node[vertex] (n\x) {} -- (-120*\x + 150:0.5);
	}
	\draw  (n1) circle (4pt);
	\node  at (1,-0.5) () {(a)};
    \end{tikzpicture}
	  \hspace*{\fill}
    \begin{tikzpicture}[thick,scale=1]
	\foreach \x in {1, 2, ..., 4}
	{
		\node[vertex] at (-90*\x:0.5) (n\x) {};
	}
	\foreach \x/\y in {2/3,3/4,4/2}
	{
		\draw[ultra thick] (n\x) -- (n\y);
	}
	\draw  (n1) circle (4pt);
	\node  at (1,-0.5) () {(b)};
    \end{tikzpicture}
	  \hspace*{\fill}
    \begin{tikzpicture}[thick]
	\foreach \x in {1, 2, ..., 3}
	{
		\draw[ultra thick] (-120*\x+30:0.5) node[vertex] (n\x) {} -- (-120*\x + 150:0.5);
	}
	\draw  (n1) circle (4pt);
	\draw (n3) -- +(0.7,0) node[vertex] (){};
	\node  at (1,-0.5) () {(c)};
    \end{tikzpicture}
	  \hspace*{\fill}
    \begin{tikzpicture}[thick]
	\begin{scope}
		\foreach \x in {1, 2, ..., 4}
		{
			\draw[ultra thick] (-90*\x:0.5) node[vertex] (n\x) {} -- (-90*\x + 90:0.5);
		}
	\end{scope}
	\foreach \x/\y in {1/3,2/4}
	{
		\draw[ultra thick] (n\x) -- (n\y);
	}
	\foreach \x/\y in {4/5, 5/6, 6/7, 7/8, 8/9}
	{
		\draw (n\x) -- (0.5*\x - 1.5, 0) node[vertex] (n\y) {};
	}
	\draw  (n9) circle (4pt);
	\node  at (1.5,-0.5) () {(d)};
    \end{tikzpicture}\vspace{-1.125em}
	  \hspace*{\fill}
  \end{center}
  \caption{Examples of rooted graphs, with the root vertex circled and primal subgraphs marked in bold: 
(a)~strictly balanced and grounded, 
(b)~strictly balanced and not~grounded, 
(c)~with a unique primal that is~grounded, 
and 
(d)~with a unique primal that is not~grounded.
Our main result \refT{thm_strictly_balanced} applies to~(a),(b), 
\refT{thm_unique} applies to~(a),(c), 
\refT{thm_nogrounded} applies to~(b),(d), 
and \refT{thm_general} applies to all of~them.}
		\label{fig_primal}
\end{figure}

The following result covers the case where no primal subgraph of~$(G,H)$ is grounded, such as in examples~(b) and~(d) from~\refF{fig_primal};  
this case includes the rooted graphs in~\refT{thm_strictly_balanced}~(ii). 
%
\begin{theorem}[No grounded primals case]\label{thm_nogrounded}%
Let~$(G,H)$ be a rooted graph with no grounded primal subgraphs. 
There is a constant~$\alpha > 0$ such that, 
for all~$p=p(n) \in [0,1]$ and~$\eps=\eps(n) \in [n^{-\alpha},1]$, 
\begin{equation}\label{eq:main:nogrounded}
	\lim_{n \to \infty} \Pr\Bigpar{\max_{\ii \in \osets} |X_{\ii} - \mu| < \eps\mu} = 
  \begin{cases}
	  0 &\text{if } \eps^2 \Phi \to 0, \\
		1 &\text{if } \eps^2 \Phi \to \infty.
	\end{cases}\vspace{-0.125em}%
\end{equation}%
\end{theorem}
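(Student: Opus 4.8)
The plan is to pin $X_{G,H}(\xx)$ down through the \emph{primal} subgraphs $J$ of $(G,H)$, exploiting that all primals are ungrounded: then each primal count $X_{G,J}(\xx)$ is, up to an error negligible \emph{uniformly} in $\xx$, an \emph{unrooted} subgraph count, whose concentration needs no union bound over the $\Theta(n^{v_G})$ roots---and this is exactly what removes the $\log n$ of~\eqref{eq_Spencer_SB}. Throughout I take $V(G)$ independent in $H$ and fix a small constant $\alpha>0$ (depending only on $v_H$) at the end. Since $\eps\le1$ and $\Phi\le\mu_{G,J}$ for all $G\subseteq J\subseteq H$ with $e_J>e_G$, the hypothesis $\eps^2\Phi\to\infty$ gives $\Phi\to\infty$, $\eps\mu_{G,J}\ge\eps^2\Phi\to\infty$ and $\eps^2\mu_{G,J}\to\infty$ for all such $J$ (in particular $p$ is forced up, as $\Phi$ involves the dense primal counts), while $\eps^2\Phi\to0$ with $\eps\ge n^{-\alpha}$ forces $\eps\to0$ unless $\Phi\to0$.

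\emph{The $1$-statement.} For a primal $J$ set $J^*:=J[V(J)\setminus V(G)]$. Since $(G,J)$ is ungrounded, $X_{G,J}(\xx)=Y_J-Z_J(\xx)$, where $Y_J$ is the unrooted count of copies of $J^*$ in $\Gnp$ (independent of $\xx$) and $0\le Z_J(\xx)\le v_G\max_w M_{J,w}$ with $M_{J,w}$ the number of such copies through $w$. A crude first-moment bound for the one-vertex-rooted count $M_{J,w}$ (mean $\Theta(\mu_{G,J}/n)$) gives $\max_\xx Z_J(\xx)=o(\eps\mu_{G,J})$ whp, using $\eps\mu_{G,J}\to\infty$ and $\eps\ge n^{-\alpha}$; and since every subgraph $K\subseteq J^*$ with $e_K\ge1$ has $\E Y_K\ge\Phi$, the standard variance bound $\Var Y_J\lesssim(\E Y_J)^2/\Phi$, Chebyshev and $\E Y_J=(1+o(\eps))\mu_{G,J}$ give $Y_J=(1\pm\tfrac{\eps}{20})\mu_{G,J}$ whp. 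Thus \emph{whp, simultaneously for all primal $J$ and all $\xx\in\osets$}, $X_{G,J}(\xx)=(1\pm\tfrac{\eps}{10})\mu_{G,J}$, with no union over $\osets$. I then feed this into the standard ``counting extensions'' reduction (Spencer~\cite{S90b}; see also~\cite{JLR,JR11}, or the general framework of this paper): $X_{G,H}(\xx)=(1\pm\eps)\mu$ holds for all $\xx$ simultaneously provided the primal counts are $(1\pm\tfrac{\eps}{10})$-concentrated for all $\xx$ and finitely many ``generic'' extension estimates hold, each a consequence of Janson's inequality valid whp once $\mu_{G,J'}\to\infty$ for all $G\subseteq J'\subseteq H$---which $\Phi\to\infty$ ensures. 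Both inputs are now in hand, giving~\eqref{eq:main:nogrounded} when $\eps^2\Phi\to\infty$.

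\emph{The $0$-statement.} Now $\eps^2\Phi\to0$; we may assume $\mu>1/2$ (else $X_{G,H}(\xx)$ is integer-valued with small mean and $\max_\xx|X_{G,H}(\xx)-\mu|\ge\eps\mu$ holds deterministically). When $\Phi\to\infty$ a single fixed root already deviates: as $V(G)$ is independent, extensions of $\xx_0$ sharing no non-root vertex are uncorrelated, so $\Var X_{G,H}(\xx_0)\asymp\mu^2/\Phi$, i.e.\ $X_{G,H}(\xx_0)$ has relative standard deviation $\asymp\Phi^{-1/2}\gg\eps$; standard anti-concentration (normal approximation) for extension counts, valid since $\mu,\Phi\to\infty$, then gives $|X_{G,H}(\xx_0)-\mu|\ge2\eps\mu>\eps\mu$ whp. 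The complementary case $\Phi=O(1)$ (which forces $\eps\to0$) is elementary: if $\Phi\to0$ then by Markov $X_{G,J_0}(\xx_0)=0$ whp for a witness $J_0$ (with $\mu_{G,J_0}=\Phi$) and any fixed $\xx_0$, hence $X_{G,H}(\xx_0)=0$ and $|X_{G,H}(\xx_0)-\mu|=\mu\ge\eps\mu$; and if $\Phi=\Theta(1)$ one extracts a deviation from a sparsest heavy subgraph $K_0\subseteq H$ (felt differently by roots that do and do not meet a copy of $K_0$, and whose absence kills all extensions), using integrality of $X_{G,H}(\xx)$. Either way $\lim_{n\to\infty}\prob{\max_\xx|X_{G,H}(\xx)-\mu|<\eps\mu}=0$.

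\emph{Main obstacle.} The crux is the ``counting extensions'' reduction behind the $1$-statement: $H$ may contain \emph{grounded non-primal} subgraphs whose own extension counts are genuinely \emph{not} $(1\pm\eps)$-concentrated over all $\xx$ (they carry a $\log n$), and one must show these cannot accumulate into a deviation of $X_{G,H}(\xx)$. This is where $\eps^2\Phi\to\infty$ does its work: it forces $p$ up so that every \emph{generic} extension count concentrates via Janson's inequality, and then the wild grounded subgraphs contribute only their expectation. Carrying this out---enumerating which subconfigurations can contribute and matching each to a Janson estimate---is the bulk of the argument, and is exactly where the paper's general extension-counting machinery does the heavy lifting.
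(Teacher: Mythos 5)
Your overall strategy coincides with the paper's (reduce each ungrounded primal count to an unrooted count of $J^{*}=J-V(G)$, control the copies meeting the root uniformly over $\xx$, then extend to $H$), but two of your steps do not go through as written. First, the claim that a ``crude first-moment bound'' gives $\max_{\xx\in\osets} Z_J(\xx)=o(\eps\mu_{G,J})$ \whp{} fails. Markov applied to $M_{J,w}$ gives $\Pr(M_{J,w}\ge t)\le \E M_{J,w}/t \asymp \mu_{G,J}/(nt)$, so the union bound over the $n$ choices of $w$ only controls $\max_w M_{J,w}$ at thresholds $t\gg\mu_{G,J}$, whereas you need $t=o(\eps\mu_{G,J})\ll\mu_{G,J}$; the two requirements are incompatible for every $\eps\le 1$. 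The paper needs a genuinely structural statement here (\refL{lem:scattered}, proved in \refApp{apx:scattered} by decomposing each copy of $K$ along a maximal chain of primal subgraphs and bounding the branching at each link via an exponential union bound): \whp{} every vertex lies in at most $O(\mu_{G,J_{\max}}/\Phi)$ copies of $K$, and only the hypothesis $\eps^2\Phi\to\infty$ then converts this into $o(\eps\mu_{G,J_{\max}})$.

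Second, your $1$-statement defers the passage from the primal to $H$ to ``the standard counting extensions reduction,'' asserting that the generic estimates it needs follow from Janson's inequality once $\mu_{G,J'}\to\infty$ for all $J'$. That condition is not sufficient: to make $X_{J_{\max},H}(\y)$ concentrated \emph{simultaneously for all} $\y\in\oset{v_{J_{\max}}}$ one must beat a union bound over $\Theta(n^{v_{J_{\max}}})$ roots, i.e.\ one needs individual tail bounds of size $n^{-\Omega(1)}$, not merely $o(1)$ — this is precisely the $\log n$ phenomenon you are trying to avoid. The missing ingredient is the density gap of \refL{lem:nogroundeddensity}: for a \emph{maximal} primal $J_{\max}$ one has $m(J_{\max},H)<m(G,H)$, so $p=\Omega(n^{-1/m(G,H)})$ forces $\Phi_{J_{\max},H}=n^{\Omega(1)}$ and the polynomial tails come for free from the moment estimate of \refT{thm_generaltail}~(i). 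Without this observation the second stage has no engine, and your closing paragraph concedes as much by invoking the very machinery you were asked to supply. (Your $0$-statement essentially matches the paper's — normal anti-concentration when $\Phi\to\infty$, Markov when $\Phi\to 0$, integrality at $\Phi\asymp 1$ — but the $\Phi\asymp 1$ case again needs the concentrated second stage, hence the same \refL{lem:nogroundeddensity}.)
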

\noindent
Similar to~\refT{thm_strictly_balanced}~(ii), 
the intuition is that all~$X_\xx$ are approximately equal once we know the number of unrooted copies of a certain subgraph of~$H$ 
(e.g., in example~(d) from \refF{fig_primal} this special subgraph~is~$K_4$).

Theorems~\ref{thm_unique}--\ref{thm_nogrounded} give thresholds for the concentration of extension counts in terms of~$\eps^2\Phi$. 
For general~${(G,H)}$ we do not have such a threshold, but the following 
result intuitively states that the transition from the \mbox{$0$-statement} to the \mbox{$1$-statement} 
always happens at some point as~$\eps^2 \Phi$ changes from~$o(1)$ to~$n^{\Omega(1)}$.  
%
%
\begin{theorem}[General case: approximate conditions]\label{thm_general}%
Let~$(G,H)$ be a rooted graph. 
For all~$p=p(n) \in [0,1]$ and~$\eps  = \eps(n) \in (0,1]$ with~$1-p = \Omega(1)$ and~$\Phi \to \infty$, 
\begin{equation}\label{eq:general}
	\lim_{n \to \infty} \Pr\Bigpar{\max_{\xx \in \osets}|X_\xx - \mu| < \eps\mu} = 
  \begin{cases}
	  0 &\text{if } \eps^2 \Phi \to 0, \\
		1 &\text{if } \eps^2 \Phi  = n^{\Omega(1)}.
	\end{cases}\vspace{0.75em}
\end{equation}%
\end{theorem}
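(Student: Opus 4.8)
The \onest{} and the \zerost{} rest on different mechanisms, so the plan is to handle them separately.

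\emph{The \onest.} Here $\eps^2\Phi\ge n^{\delta}$ for some fixed $\delta>0$. The plan is a union bound over the $\Theta(n^{v_G})$ roots $\xx\in\osets$, fed by two tail estimates for a single $X_\xx$. For the lower tail, apply Janson's inequality: if $\bar\Delta_\xx$ is the usual Janson parameter for $X_\xx$ (the sum of $\prob{e,e'\text{ both occur}}$ over ordered pairs of $(G,H)$-extensions of $\xx$ that meet outside $\xx$), then a routine computation over the subgraphs $G\subsetneq J\subseteq H$ gives $\bar\Delta_\xx\asymp\sum_{J}\mu^2/\mu_{G,J}\asymp\mu^2/\Phi$, so $\prob{X_\xx\le(1-\eps)\mu}\le\exp\bigl(-\Omega(\eps^2\mu^2/\bar\Delta_\xx)\bigr)=\exp\bigl(-\Omega(\eps^2\Phi)\bigr)$. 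For the upper tail, apply polynomial concentration (Vu's inequality, or Kim--Vu) to $X_\xx$, a polynomial of fixed degree $e_H-e_G$ in the edge indicators; this gives, for $n$ large, $\prob{X_\xx\ge(1+\eps)\mu}\le\exp\bigl(-C_1(\eps^2\Phi)^{1/(e_H-e_G)}\bigr)$ with $C_1=C_1(G,H)>0$ depending only on $(G,H)$ and, crucially, not on $\eps$. As $\eps\le1$ forces $\Phi\ge\eps^2\Phi\ge n^{\delta}$, both bounds are $\exp(-n^{\Omega(1)})=o(n^{-v_G})$, and the union bound finishes. This is exactly where the hypothesis $\eps^2\Phi=n^{\Omega(1)}$ — rather than just $\eps^2\Phi\to\infty$ — is used: the polynomial slack survives the bounded root in the upper-tail exponent even when $\eps=\eps(n)\to0$.

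\emph{The \zerost.} Now $\Phi\to\infty$, $1-p=\Omega(1)$, $\eps^2\Phi\to0$ (so $\eps\to0$). It suffices to show a \emph{single} fixed root fails \whp{}, because $\prob{\max_{\xx}|X_\xx-\mu|<\eps\mu}\le\prob{|X_{\xx_0}-\mu|<\eps\mu}$ for any fixed $\xx_0\in\osets$. The same overlap structure gives $\Var(X_{\xx_0})\le\bar\Delta_{\xx_0}\asymp\mu^2/\Phi$, and — using $1-p=\Omega(1)$ to keep each covariance a bounded fraction of the corresponding $\prob{e,e'\text{ both occur}}$ — also the matching lower bound; thus $\Var(X_{\xx_0})\asymp\mu^2/\Phi=:\sigma^2$, with $\sigma\ge\sqrt{\Phi}\to\infty$ (as $\Phi=\min_J\mu_{G,J}\le\mu_{G,H}=\mu$). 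Since $\Phi\to\infty$ and $1-p=\Omega(1)$, the count $X_{\xx_0}$ obeys a central limit theorem $(X_{\xx_0}-\mu)/\sigma\Rightarrow N(0,1)$ — the extension-count analogue of Ruci\'nski's theorem for subgraph counts — so its mass sits on a window of width $\asymp\sigma$. Finally, the interval $(\mu(1-\eps),\mu(1+\eps))$ has $\sigma$-normalised width $2\eps\mu/\sigma\asymp\eps\sqrt{\Phi}\to0$ (this is where $\eps^2\Phi\to0$ enters), so $\prob{X_{\xx_0}\in(\mu(1-\eps),\mu(1+\eps))}\to0$, i.e.\ $\prob{|X_{\xx_0}-\mu|\ge\eps\mu}\to1$, as required.

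\emph{The main obstacle} is the central limit theorem for $X_{\xx_0}$ — equivalently, the anti-concentration estimate $\sup_a\prob{X_{\xx_0}\in[a,a+\lambda]}=o(1)$ for every $\lambda=o(\sigma)$ — for an \emph{arbitrary} rooted graph in the regime $\Phi\to\infty$, $1-p=\Omega(1)$. This can be proved directly (e.g.\ by Stein's method, or by conditioning on a sparsest ``bottleneck'' sub-extension $J^*$ with $\mu_{G,J^*}=\Phi$ and using independence of the remaining edges); alternatively one can quote \refT{thm_unique} or \refT{thm_nogrounded} in the cases they cover (and $\eps\ge n^{-\alpha}$), or replace the CLT by a second-moment argument over the roots — a fourth-moment bound gives $\prob{|X_\xx-\mu|\ge\eps\mu}=\Omega(1)$ for each $\xx$, and a decorrelation estimate for overlapping extension structures of distinct roots then lets Chebyshev produce a bad root \whp{}. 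On the \onest{} side, the only subtlety is to keep the polynomial-concentration constants genuinely $\eps$-free.
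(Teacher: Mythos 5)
Your argument is correct, and its two halves compare differently with the paper (which routes \refT{thm_general} through the slightly more general \refT{thm_generaltail}, proved in \refApp{apx:general}). For the \zerost{} you take essentially the paper's route: the paper also fixes a single root, uses $\sigma^2\asymp(1-p)\mu^2/\Phi$, and proves asymptotic normality of $X_\xx$ by the method of moments (an extension of the textbook argument for unrooted subgraph counts, \refCl{cl:mom}~\ref{cl:mom:asymp}, valid under $\Phi(1-p)\to\infty$, which your hypotheses supply), so that the window $(1\pm\eps)\mu$ has normalized width $\asymp\sqrt{\eps^2\Phi/(1-p)}\to0$; the CLT you flag as the main obstacle is thus exactly the paper's key claim, handled at the same level of detail. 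For the \onest{} your route is genuinely different: the paper avoids Janson and Kim--Vu entirely and instead bounds $\E(X_\xx-\mu)^{2m}\le C_m(\mu^2/\Phi)^m$ (a by-product of the same moment computation used for the CLT, \refCl{cl:mom}~\ref{cl:mom:order}), applies Markov's inequality with a fixed large $m$ chosen according to the $\Omega(1)$ exponent in $\eps^2\Phi=n^{\Omega(1)}$, and takes a union bound; this treats both tails at once, sidesteps any discussion of $\eps$-uniformity of constants, and reuses work already needed for the \zerost{}. Your Janson-plus-polynomial-concentration alternative also works; the only quibble is that Kim--Vu actually yields an exponent of order $(\eps^2\Phi)^{1/(2(e_H-e_G))}$ minus an additive $O(\log n)$ term, rather than $(\eps^2\Phi)^{1/(e_H-e_G)}$, but since any fixed polynomial root of $n^{\Omega(1)}$ still dominates $(v_G+1)\log n$, the union bound goes through unchanged. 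Both proofs make essential use of $\eps^2\Phi=n^{\Omega(1)}$ rather than merely $\eps^2\Phi\to\infty$: yours to survive the root in the upper-tail exponent, the paper's to let a fixed moment order~$m$ suffice.
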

\noindent
The $1$-statement in~\eqref{eq:general} 
implies~\cite[Corollary~4]{S90b}, which in turn strengthens a result that 
played a key role in the study of zero-one laws~\cite{SS1988} due to Shelah and Spencer
(since the `safe' assumptions from~\cite{S90b,SS1988} imply~$\Phi = n^{\Omega(1)}$ 
via~\refR{rem:Phibig}~\ref{eq:Phibig:iv} from \refS{s_prelim}).

\pagebreak[3]

\subsection{Discussion: open problems and cautionary examples}\label{sec:intro:general}
For rooted subgraph extension counts, the main open problem is to fully determine the thresholds for concentration, 
i.e., to close the gap in~\eqref{eq:general} of \refT{thm_general} 
(and to weaken the conditions of Theorems~\ref{thm_strictly_balanced}--\ref{thm_nogrounded}).   
\begin{problem}\label{prb:open}%
Determine the `correct' conditions for the $0$-~and $1$-statements of any rooted graph~$(G,H)$. 
\end{problem}
%
%
Our 
understanding of~\refPr{prb:open} is still far from satisfactory. 
Indeed, even for fixed~$\eps \in (0,1]$ the correct {$1$-statement} condition remains open, 
which we now illustrate for the rooted graph~(e) from \refF{counterexample}. 
In this case, any {$(G,H)$-extension} can be viewed as a combination of a {$(G,K_4)$-extension} and a {$(K_4,H)$-extension}. 
The proof of Spencer's general $1$-statement \mbox{result~\cite[Theorem~3]{S90b}} combines this 
decomposition with his strictly balanced result~\eqref{eq_Spencer_SB} for~$(G,K_4)$ and~$(K_4,H)$, 
leading to a sufficient condition of form~$\min\{\mu_{G,K_4}, \mu_{K_4,H}\} \ge K'(\eps) \log n$ (cf.~\cite[Section~2]{S90b}). 
The following result shows that this sufficient condition can be weakened in some range, demonstrating that Spencer's general $1$-statement condition is not always optimal. 
%
%
\begin{proposition}\label{prop:counterexample:2}%
Let~$(G,H)$ be the rooted graph~(e) depicted in \refF{counterexample}. 
Set~$\omega := np^2$. 
For all~$p=p(n) \in [0,1]$ and $\eps=\eps(n) \in (0,1]$ 
such that~$\omega \ll \log n$ and~$\eps^2 \omega^3 \gg \log n$, 
we have~$\eps^2\mu_{G,K_4} \gg \log n \gg \eps^2 \mu_{K_4,H}$ 
but~$\Pr(\max_{\xx \in \osets}|X_\xx - \mu| < \eps\mu) \to 1$ as~$n \to \infty$. 
\end{proposition}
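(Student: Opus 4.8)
First I would pin down the rooted graph (e): from Figure~\ref{counterexample} and the surrounding text, $(G,H)$ factors through $K_4$, meaning $G \subsetneq K_4 \subsetneq H$ with $G$ the rooted part, $(G,K_4)$ one extension type, and $(K_4,H)$ another. With $\omega = np^2$ one has $\mu_{G,K_4} \asymp \omega^{a}$ and $\mu_{K_4,H} \asymp \omega^{b}$ for appropriate small exponents, and the product structure gives $\mu = \mu_{G,H} \asymp \mu_{G,K_4} \cdot \mu_{K_4,H}$ (up to the polynomial-in-$n$ factors which cancel in the relevant ratios); the hypotheses $\eps^2\mu_{G,K_4} \gg \log n \gg \eps^2\mu_{K_4,H}$ and $\eps^2\omega^3 \gg \log n \gg \omega$ are then just bookkeeping constraints I would record explicitly. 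The plan is to prove the $1$-statement by a \emph{two-level} concentration argument that exploits this factorization, rather than a single union bound over all roots.

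**The two-level strategy.** I would introduce an intermediate random variable. For each root tuple $\xx$ and each copy $K$ of $K_4$ that is a $(G,K_4)$-extension of $\xx$ in $\Gnp$, let $Y_K$ count the $(K_4,H)$-extensions of $K$, so that $X_\xx = \sum_K Y_K$ where the sum is over the (random) set of $(G,K_4)$-extensions of $\xx$. The key point is that $(G,K_4)$ is strictly balanced and grounded, so by Theorem~\ref{thm_strictly_balanced}(i) — applied with the \emph{same} $\eps$, which is legitimate precisely because $\eps^2\mu_{G,K_4} \gg \log n$ — whp every root $\xx$ has $(1-\eps)\mu_{G,K_4} \le N_\xx \le (1+\eps)\mu_{G,K_4}$, where $N_\xx$ is the number of $(G,K_4)$-extensions. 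Now I must control $\sum_K Y_K$ over this many-but-concentrated family of copies $K$. Here $\eps^2\mu_{K_4,H} \ll \log n$, so I \emph{cannot} union-bound each $Y_K$ individually to get $Y_K \sim \mu_{K_4,H}$; instead I would argue that the total $\sum_K Y_K$ concentrates because it is a sum over $\Theta(\mu_{G,K_4})$ weakly-dependent terms each with mean $\Theta(\mu_{K_4,H})$, so the effective deviation scale is governed by $\eps^2 \mu_{G,K_4}\mu_{K_4,H} = \eps^2\mu \gg \log n$ — but one must be careful that the $Y_K$'s can share edges. I would handle this by conditioning on the edges inside and incident to the $K_4$-copies, then using a Janson/Chernoff-type bound on $\sum_K (Y_K - \E[Y_K \mid \cdots])$ together with a separate estimate showing the conditional means $\E[Y_K\mid\cdots]$ are themselves all $(1\pm o(\eps))\mu_{K_4,H}$ whp (this last step uses $\eps^2\omega^3 \gg \log n$ to control the relevant lower-order extension counts feeding into $H$ via intermediate subgraphs between $K_4$ and $H$).

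**What replaces the naive union bound.** The clean way to package this is via the auxiliary `disjoint extensions' technology the authors mention for the $0$-statement, used here in reverse for the upper tail control, or more simply: take a union bound over the $\Theta(n^{v_G})$ roots $\xx$ of the event $\{X_\xx \notin (1\pm\eps)\mu\}$, but estimate each term not by a crude per-root tail but by first conditioning on the $K_4$-extension structure. Given that whp $N_\xx \asymp \mu_{G,K_4}$ for all $\xx$ simultaneously, and given the conditional means are right, the conditional distribution of $X_\xx$ is a sum of $\asymp \mu_{G,K_4}$ roughly-independent bounded contributions, so $\Pr(|X_\xx - \mu| \ge \eps\mu \mid \text{good } K_4\text{-structure}) \le \exp(-\Omega(\eps^2\mu / (\text{max contribution})))$; since each $(K_4,H)$-extension contributes $O(1)$ and there are the right number of them, the exponent is $\Omega(\eps^2\mu)$ or at worst $\Omega(\eps^2 \omega^3)$ after accounting for clustering, which beats $\log n$ by hypothesis. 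Summing over the $n^{O(1)}$ roots then gives the $1$-statement.

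**Main obstacle.** The real difficulty is the dependence among the $Y_K$ and, relatedly, making rigorous the claim that the conditional means $\E[Y_K \mid \text{edges near all }K_4\text{-copies}]$ are uniformly $(1\pm o(\eps))\mu_{K_4,H}$ — this requires understanding the subgraphs $J$ with $K_4 \subseteq J \subseteq H$ and showing the relevant intermediate extension counts are concentrated, which is exactly where the hypothesis $\eps^2\omega^3 \gg \log n$ (as opposed to the weaker $\eps^2\mu_{K_4,H} \gg \log n$ that fails) must enter. I would expect to spend most of the proof verifying that $(G,H)$ is `balanced enough above $K_4$' in the precise sense needed, identifying $\omega^3$ as the correct governing quantity $\Phi$-type parameter for the $K_4$-to-$H$ part, and then invoking a suitable concentration inequality (Janson's inequality for the lower tail on disjoint $H$-copies through each fixed $K_4$, Chernoff or a martingale bound for the upper tail) to close the gap between $\eps^2\mu_{K_4,H}$ and $\eps^2\omega^3$. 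The conceptual message — that decomposing through $K_4$ and applying Theorem~\ref{thm_strictly_balanced}(i) to the bottom layer lets the top layer get away with a weaker condition than Spencer's $\min$ — is what drives the whole argument.
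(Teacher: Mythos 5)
Your proposal follows essentially the same route as the paper's proof: factor the extension through the $K_4$-layer, apply \refT{thm_strictly_balanced}~(i) to the strictly balanced, grounded pair $(G,K_4)$ (using $\eps^2\mu_{G,K_4}\asymp\eps^2\omega^3\gg\log n$), condition on that layer via a two-round edge exposure, and then show that the sum of the $(K_4,H)$-contributions concentrates with exponent $\Omega(\eps^2\mu)$ rather than the unavailable $\Omega(\eps^2\mu_{K_4,H})$, so that a union bound over the $n$ roots closes the argument. The paper implements this by exposing first the edges at $x$ and inside $\Gamma_x$, writing $X_{(x)}$ up to an error $\ll\eps\mu$ as $Z_x=\sum_{abc\in\cT_x}(Y_{a,b}+Y_{b,c}+Y_{a,c})$, where $\cT_x$ is the set of triangles in $\Gamma_x$ and $Y_{a,b}$ counts common neighbours of $a,b$ outside $\{x\}\cup\Gamma_x$, and then using Janson's inequality for the lower tail and a bounded-dependency inequality for the upper tail of $Z_x$.

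The one step your sketch leaves genuinely open is the one you flag as the main obstacle: controlling the clustering among the second-layer contributions. This cannot be absorbed into a generic appeal to Janson, because a priori a single vertex (or edge) of $\Gamma_x$ could lie in very many triangles of $\cT_x$, in which case the paths counted by a single $Y_{a,b}$ enter $Z_x$ with large multiplicity, the $\Delta$-term blows up, and the $\exp(-\Omega(\eps^2\mu))$ bound is lost. The paper's concrete fix is an additional whp structural property: every $y\in\Gamma_x$ lies in at most $15$ triangles of $\cT_x$, proved by showing that otherwise $\Gamma_x$ would contain a $4$-flower or a $4$-book at $y$ all of whose vertices are joined to $x$, a configuration whose expected count is $n^{-1/2+o(1)}$ in the forced range $p=n^{-1/2+o(1)}$. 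With this, each length-two path shares an edge with only $O(1)$ others, so the Janson $\Delta$-term is $O(\E Z_x)$ and the upper tail follows from a concentration inequality for sums with bounded dependency degree. By contrast, the worry in your sketch about concentrating conditional means over intermediate subgraphs $K_4\subseteq J\subseteq H$ is unnecessary here: after the first exposure round the conditional mean of $Z_x$ is simply $3|\cT_x|\,(n-1-|\Gamma_x|)p^2$, which is $(1\pm\eps/8)\mu$ deterministically once one also records $|\Gamma_x|\le 9np\ll\eps n$.
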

\begin{figure}
  \begin{center}
    \hspace*{\fill}
    \begin{tikzpicture}[thick,scale=0.5]
	\draw 
	(-1,0) node[vertex] (root) { } -- (1, 1) node[vertex] (top) { } -- (1, -1) node[vertex] (bot) { } -- (root)
	(top) -- (0.2, 0) node[vertex] (mid) {} -- (bot)
	(mid) -- (root)
	(top) -- (2, 0) node[vertex] (add1) {} -- (bot);
	\draw  (root) circle (8pt);
	\node at (3.5,-1) () {(e)};
    \end{tikzpicture}
    \hspace*{\fill}
    \begin{tikzpicture}[thick,scale=0.5]
	\draw 
	(-1,0) node[vertex] (root) { } -- (1, 1) node[vertex] (top) { } -- (1, -1) node[vertex] (bot) { } -- (root)
	(top) -- (0.2, 0) node[vertex] (mid) {} -- (bot)
	(mid) -- (root)
	(top) -- (2, 0.5) node[vertex] (add1) {} -- (bot)
	(top) -- (2, -0.5) node[vertex] (add2) {} -- (bot);
	\draw  (root) circle (8pt);
	\node at (3.5,-1) () {(f)};
    \end{tikzpicture}\vspace{-1.125em}
    \hspace*{\fill}
  \end{center}
  \caption{The rooted graphs used in~Propositions~\ref{prop:counterexample:2}--\ref{prop:counterexample:1}, with the root vertex circled: 
for~(e) Spencer's general $1$-statement is not~optimal, 
and for~(f) the natural condition~$\eps^2\Phi \gg \log n$ does~not imply the~$1$-statement.}
		\label{counterexample}
\end{figure}
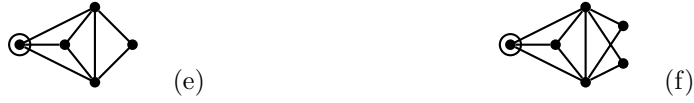
\noindent
It is not hard to see that in the setting of \refP{prop:counterexample:2} we have $\eps^2\Phi \asymp \eps^2 \mu_{G, K_4} \gg \log n$, which together with Theorems~\ref{thm_unique}--\ref{thm_nogrounded} suggests that maybe~$\eps^2 \Phi \gg \log n$ is always a sufficient condition\footnote{Further support comes from the fact that~$X_\xx$ is asymptotically normal, see~\refCl{cl:mom}~\ref{cl:mom:asymp} in \refApp{apx:general} and the variance estimate~\eqref{eq:Variance} from \refS{s_prelim}, which makes it plausible that~$\Pr(|X_\xx - \mu| \ge \eps\mu) \le \e^{-\Omega((\eps \mu)^2/\Var X_\xx)} \le \e^{-\Omega(\eps^2 \Phi)} \ll n^{-v_G}$ holds, which in turn would then establish the $1$-statement by taking the union bound over all~$\Theta(n^{v_G})$ roots~$\xx$.} 
for the $1$-statement (which would sharpen~\refT{thm_general}). 
However, the following result shows that this speculation is false for the rooted graph~(f) depicted in~\refF{counterexample}, 
indicating that Problem~\ref{prb:open} is more tricky than one might~think.
\begin{proposition}\label{prop:counterexample:1}
Let~$(G,H)$ be the rooted graph~(f) depicted in \refF{counterexample}. 
Set~$\omega := np^2$. 
For all~$p=p(n) \in [0,1]$ and~$\eps=\eps(n) \in (0,1]$ 
such that~$\omega \ll (\log n)^{0.39}$ and~$\eps^2 \omega^3 \gg \log n$, 
we have~$\eps^2\Phi \asymp \eps^2\mu_{G,K_4} \gg \log n$ 
but~$\Pr(\max_{\xx \in \osets}|X_\xx - \mu| < \eps\mu) \to 0$ as~$n \to \infty$. 
\end{proposition}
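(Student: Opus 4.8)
The plan is to read off the parameters for the rooted graph~(f) and then prove the $0$-statement by exhibiting, whp, a single edge whose codegree is so large that one of its neighbours is forced to have many more than $(1+\eps)\mu$ extensions.

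\emph{Parameters.} Here $v_G=1$, $e_G=0$, $v_H=6$ and $e_H=10$, so~\eqref{def:muGH} gives $\mu\asymp n^{5}p^{10}\asymp\omega^{5}$. A short case analysis of the subgraphs $G\subseteq J\subseteq H$ shows that $m(G,H)=2$, that among the primal subgraphs the $K_4$ spanned by the root, the two degree-$5$ vertices and the `inner' vertex has the fewest edges ($e_{K_4}=6$, the other primals having $e_J\in\{8,10\}$), and that $\mu_{G,J}\gg\omega^{3}$ for every other relevant~$J$: when $d(G,J)<2$ because $\mu_{G,J}=n^{\,v_J-1-e_J/2}\,\omega^{\,e_J/2}\gtrsim n^{1/2}$ while $\omega^{3}\ll(\log n)^{1.2}=o(n^{1/2})$, and for the larger primals because $\mu_{G,J}\in\{\omega^{4},\omega^{5}\}$ with $\omega\to\infty$. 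Hence $\Phi=\mu_{G,K_4}\asymp\omega^{3}$ by~\eqref{eq_PhiGH}, so the hypothesis $\eps^{2}\omega^{3}\gg\log n$ gives $\eps^{2}\Phi\asymp\eps^{2}\mu_{G,K_4}\asymp\eps^{2}\omega^{3}\gg\log n$, as claimed, and also $\mu\asymp\omega^{5}\to\infty$; it remains to prove the $0$-statement.

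\emph{A deterministic reduction.} Put $D:=\ceil{C\omega^{5/2}}$, where $C>0$ is a large absolute constant, to be chosen using $\mu=O(\omega^{5})$. First I claim that if $\Gnp$ contains an edge $\{t,b\}$ with at least $D$ common neighbours, two of which --- call them $x$ and $m$ --- are adjacent, then the root $\xx=x$ satisfies $X_x\ge(1+\eps)\mu$. Indeed, taking the two degree-$5$ vertices of~$H$ to be $t,b$, the inner vertex to be $m$, and letting the two remaining vertices of~$H$ range over the $\ge D-2$ common neighbours of $t$ and $b$ other than $x,m$, one obtains $\gtrsim D^{2}\asymp\omega^{5}$ distinct $(G,H)$-extensions of $x$; choosing $C$ large enough that this exceeds $2\mu$ for all large~$n$ gives $X_x\ge 2\mu\ge(1+\eps)\mu$ (here $\eps\le1$). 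Then $|X_x-\mu|\ge\mu\ge\eps\mu$, so $\bigl\{\max_{\xx\in\osets}|X_\xx-\mu|<\eps\mu\bigr\}$ fails; it thus suffices to find such a configuration whp.

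\emph{Finding the over-full edge.} I would do this by applying Janson's inequality to the events $A_\tau$ indexed by tuples $\tau=(t,b,x,m,a_1,\dots,a_{D-2})$ of distinct vertices, where $A_\tau$ says that the $2D+2$ edges $\{t,b\}$, $\{x,m\}$ and $\{y,t\},\{y,b\}$ for $y\in\{x,m,a_1,\dots,a_{D-2}\}$ are all present; every $A_\tau$ certifies a configuration as above, with root~$x$. A direct count gives $\mu_J:=\sum_\tau\prob{A_\tau}\asymp n^{D+2}p^{2D+2}/(D-2)!\asymp n\,\omega^{D+1}/(D-2)!$, and Stirling turns $\mu_J\to\infty$ into the inequality $\omega^{5/2}\log\omega\ll\log n$, which is exactly what $\omega\ll(\log n)^{0.39}$ provides ($\tfrac52\cdot0.39=0.975<1$, the remaining slack absorbing the $\log\log n$ factor). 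For the correlation term $\Delta_J=\sum_{\tau\sim\tau'}\prob{A_\tau\cap A_{\tau'}}$ one groups the overlapping pairs by their common pages-pair $\{t,b\}$ and by the number of shared common neighbours, and the identity $\sum_{k\ge0}\binom{D}{k}\omega^{D-k}=(1+\omega)^{D}=\omega^{D}\e^{O(\omega^{3/2})}$ shows the dominant (same-$\{t,b\}$) contribution to be $n^{-3/2+o(1)}\mu_J^{2}$, with the remaining overlaps smaller; thus $\Delta_J=o(\mu_J^{2})$ and Janson's inequality gives $\prob{\bigcap_\tau\overline{A_\tau}}\to0$, so whp some $A_\tau$ occurs. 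The genuinely delicate step is this bound $\Delta_J=o(\mu_J^{2})$: each $A_\tau$ prescribes $\Theta(\omega^{5/2})$ edges and distinct witnessing tuples overlap heavily, so one must enumerate carefully all the ways two tuples can share edges and verify that the total correlation stays negligible --- and it is precisely there that the margin between the exponent $0.39$ and the critical value $2/5$ is used up.
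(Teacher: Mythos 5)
Your parameter computation and the deterministic reduction are both correct and in fact coincide with the paper's: the paper also reduces the $0$-statement to exhibiting, \whp{}, an edge of some $K_4$ whose codegree is at least $z\asymp\omega^{5/2}\asymp\mu^{1/2}$ (forcing $\binom{z}{2}\ge(1+\eps)\mu$ extensions of a root vertex of that $K_4$), and both arguments consume the hypothesis $0.39<2/5$ only through $\omega^{5/2}\log\omega=o(\log n)$. Where you genuinely diverge is the probabilistic step. The paper first finds $\Theta(n)$ \emph{vertex-disjoint} copies of $K_4$ inside $W=\{1,\dots,\lfloor n/2\rfloor\}$ via \cite[Theorem~3.29]{JLR}, conditions on them, and lets $Z_i$ count the common neighbours of a designated edge of the $i$-th copy among the unexposed vertices $[n]\setminus W$; the $Z_i$ are then exactly independent, so all that is needed is the one-line binomial bound $\Pr(Z_i\ge z)\ge\bigl(np^2/(2z)\bigr)^{z}\e^{-np^2}\ge n^{-o(1)}$ and a product over the $\Theta(n)$ indices. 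Your route instead runs Janson's inequality over the full family of witnessing tuples, which shifts the entire burden onto the correlation term $\Delta_J$.

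That bound $\Delta_J=o(\mu_J^2)$ is the one piece of your proposal that is asserted rather than proved, and you say so yourself. Your computation of the same-$\{t,b\}$ contribution is right (it is $n^{-3/2+o(1)}\mu_J^2$: the factor $\e^{O(D\log\omega)}=n^{o(1)}$ is absorbed by $n^2p=n^{3/2}\omega^{1/2}$), but since each $A_\tau$ prescribes $\Theta(\omega^{5/2})$ edges, a complete proof must enumerate all remaining overlap patterns --- shared spokes with $t=t'$ but $b\ne b'$, the edge $\{x,m\}$ of one tuple reappearing as a spoke or as $\{t',b'\}$ of the other, and so on --- and check in each case that every shared edge costs a net factor of order $D^{O(1)}/(np)=n^{-1/2+o(1)}$ in the pair count. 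I see no overlap type that would dominate, so your approach should go through, but this bookkeeping is precisely what the paper's two-round disjoint-copies device (modelled on \cite[Lemma~3]{SW18}) is designed to avoid. Either carry out the $\Delta_J$ enumeration in full, or replace the Janson step by the disjointness argument; as written, the proof is not complete.
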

%

Overall, we hope that the above intriguing examples and open problems   
will stimulate more research into rooted subgraph counts. 
When~$(G,H)$ is strictly balanced and grounded, 
then we conjecture that~\eqref{eq:thm:unique} holds for suitable~$c,C>0$ 
under the natural assumptions~$\mu \to \infty$ and~$1-p=\Omega(1)$, 
i.e., without assuming~$\eps \ge n^{-\alpha}$. 
We leave it as an open problem to formulate a conjecture for the general solution to~\refPr{prb:open}, 
which in many cases is closely related to determining the regime where $\Pr(|X_\xx - \mu| \ge \eps\mu)$ changes from~$n^{-o(1)}$ to~$n^{-\omega(1)}$, say. 
In the concluding remarks we also discuss a potential connection to extreme value theory (see \refS{sec:conclusion}).


\subsection{Organization of the paper}
In \refS{s_prelim} we introduce some auxiliary results, which also imply~\refT{thm_general}. 
In \refS{s_strictly_balanced} we prove our main result \refT{thm_strictly_balanced}~(i) for strictly balanced~$(G,H)$ that are grounded. 
In Sections~\ref{s_nogrounded} and~\ref{s_unique} we prove Theorems~\ref{thm_unique} and~\ref{thm_nogrounded}, 
i.e., cover the case where no grounded primal of~$(G,H)$ exists, and the case where the primal of~$(G,H)$ is unique and grounded, respectively.  
In \refS{sec:ext:non}, we prove \refT{thm_strictly_balanced}~(ii) for strictly balanced~$(G,H)$ that are not grounded. 
In \refS{sec:counterexample} we prove the cautionary examples from Propositions~\ref{prop:counterexample:2}--\ref{prop:counterexample:1}. 
Finally, \refS{sec:conclusion} contains some concluding remarks and~problems.  

\pagebreak[3]

\section{Preliminaries}\label{s_prelim}
In this section we collect some useful basic observations, 
and a partial result which implies Theorem~\ref{thm_general}. 
First, by adapting the textbook argument~\cite[Lemma~3.5]{JLR} for (unrooted) subgraph counts, 
for any rooted graph~$(G,H)$ it is standard to see that the variance of~$X_{G,H}(\xx)$ satisfies 
\begin{equation}\label{eq:Variance}
\sigma^2 = \sigma_{G,H}^2 := \Var X_{G,H}(\xx) 
	\asymp (1 - p)\mu_{G,H}^2 /\Phi_{G,H}
\end{equation}
for any edge probability~$p=p(n) \in (0,1]$, 
where~$\mu=\mu_{G,H}$ and~$\Phi=\Phi_{G,H}$ are as defined in~\eqref{def:muGH} and~\eqref{eq_PhiGH};~cf.~\cite{Matas2012phd}. 
Next, inspired by similar statements for subgraph counts~\cite[Lemma~3.6]{JLR}, 
using the relation~$\mu_{G,J} \asymp \xpar{n^{1/d(G,J)}p}^{e_J-e_G}$ for all~$G \subseteq J \subseteq H$ with~$e_J > e_G$,  
it is straightforward to establish the following useful~properties. 
Recall that~$m(G,H)$ and $\Phi = \Phi_{G,H}$ are defined in~\eqref{def:mGH} and~\eqref{eq_PhiGH}, respectively. 
\begin{remark}\label{rem:Phibig}%
For any rooted graph~$(G,H)$, the following hold for all~$p=p(n)\in [0,1]$:%
\begin{romenumerate}
\item\label{eq:Phibig:i}
$\Phi \to \infty$ is equivalent to~$p \gg n^{-1/m(G,H)}$.
\item\label{eq:Phibig:ii}
$\Phi = \Omega(1)$ is equivalent to~$p = \Omega(n^{-1/m(G,H)})$. 
\item\label{eq:Phibig:iii}
If~$\Phi \asymp 1$, then~$\mu_{G,J} \asymp 1$ for any~$G \subseteq J \subseteq H$ that is primal for~$(G,H)$.
\item\label{eq:Phibig:iv}
If~$p = \Omega(n^{-1/m(G,H) + \eta})$ for some constant~$\eta \ge 0$, then~$\Phi = \Omega(n^{\eta})$. 
\end{romenumerate}
\end{remark}
\noindent
Finally, the approximate result Theorem~\ref{thm_general} immediately follows 
from the following slightly more general theorem, 
whose technical statement will be convenient in several later proofs. 
In particular, in some ranges of the parameters, we will be able to deduce the desired $1$-~or $0$-statements directly from~\eqref{eq:thm_generaltail:1}--\eqref{eq:thm_generaltail:0} below. 
\begin{theorem}\label{thm_generaltail}%
For any rooted graph~$(G,H)$, the following hold for all~$p=p(n)\in [0,1]$:%
\begin{romenumerate}
\item\label{thm_tail1}%
	If~$\Phi = \Omega(1)$ and $(t/\mu)^2\Phi\ge n^{\Omega(1)}$, then 
	\begin{equation}\label{eq:thm_generaltail:1}
\lim_{n \to \infty} \Pr\Bigpar{\max_{\xx \in \osets}|X_\xx - \mu| < t} = 1. \hspace{2.5em}
	\end{equation}%
\item\label{thm_tail0}%
	If $\eps = \eps(n) \in (0,1]$ and either (a)~$\Phi(1-p) \to \infty$ and $\eps^2 \Phi/(1-p) \to 0$, or~(b)~$\Phi \to 0$, then 
	\begin{equation}\label{eq:thm_generaltail:0}
\lim_{n \to \infty} \Pr\Bigpar{\max_{\xx \in \osets}|X_\xx - \mu| \ge \eps \mu} = 1 . \hspace{2.5em}
\vspace{-0.125em}%
	\end{equation}%
\end{romenumerate}
\end{theorem}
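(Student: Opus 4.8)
The plan is to prove the two parts by complementary routes, using the variance estimate~\eqref{eq:Variance} together with a union bound for the $1$-statement, and an anticoncentration-via-second-moment argument for the $0$-statement. For part~\ref{thm_tail1}, I would first fix a single root~$\xx$ and show a stretched-exponential upper tail bound of the form $\Pr(|X_\xx - \mu| \ge t) \le \exp(-n^{\Omega(1)})$ whenever $(t/\mu)^2\Phi \ge n^{\Omega(1)}$ and $\Phi = \Omega(1)$. The natural tool here is a Chebyshev-type bound boosted to concentration of measure: since $X_\xx$ is a polynomial of bounded degree $e_H - e_G$ in the independent edge indicators, one can apply a polynomial concentration inequality (e.g.\ Kim--Vu or the Janson--Ruci\'nski type bounds, or simply a bounded-differences/martingale argument exploiting that each edge affects $X_\xx$ by at most a lower-order extension count). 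The key quantitative input is that the fluctuation scale is $\sigma \asymp \mu/\sqrt{\Phi}$ by~\eqref{eq:Variance}, so $t/\sigma \asymp (t/\mu)\sqrt{\Phi} \ge n^{\Omega(1)}$, giving a tail bound that beats $n^{-v_G}$ with room to spare; a union bound over the $\Theta(n^{v_G})$ roots~$\xx$ then finishes~\eqref{eq:thm_generaltail:1}. I would be a little careful to extract the concentration bound uniformly in~$p$, handling the degenerate regimes $p$ close to~$0$ or~$1$ separately (when $p \to 0$ one can compare with a Poisson-type count; the condition $\Phi = \Omega(1)$ is exactly what keeps $\mu_{G,J}$ bounded below for all relevant $J$, cf.\ \refR{rem:Phibig}).

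For part~\ref{thm_tail0}, the goal is reversed: I want to show that \emph{some} root~$\xx$ deviates by at least $\eps\mu$, with high probability. In case~(b), $\Phi \to 0$ forces $\mu_{G,J} \to 0$ for some $G \subseteq J \subseteq H$ with $e_J > e_G$ (by definition~\eqref{eq_PhiGH}); but then already $X_{G,J}(\xx) = 0$ fails to hold for all~$\xx$ with probability bounded away from~$1$ only if the expected number of such $J$-extensions summed over roots is small — in fact one expects that for a typical~$\xx$ there is \emph{no} $(G,H)$-extension at all, so $X_\xx = 0 \le (1-\eps)\mu$ once $\mu = \Omega(1)$, and if $\mu \to 0$ the statement is even easier. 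The substantive case is~(a): here $\Phi(1-p)\to\infty$ but $\eps^2\Phi/(1-p) \to 0$, i.e.\ $\eps\mu \ll \sigma$, so the target deviation is \emph{below} the standard deviation of a single $X_\xx$. I would pick a well-chosen subgraph $G \subseteq J \subseteq H$ achieving (or nearly achieving) the minimum in~\eqref{eq_PhiGH}, and argue that $X_{G,J}(\xx)$ already fluctuates on scale $\sigma_{G,J} \asymp \mu_{G,J}/\sqrt{\Phi_{G,J}} \gg \eps \mu_{G,J}$; passing from fluctuations of $X_{G,J}$ to fluctuations of $X_{G,H}$ (via the decomposition of an $(G,H)$-extension through~$J$) should give that $X_\xx$ is not within $\eps\mu$ of~$\mu$ with probability bounded away from~$0$ for each fixed~$\xx$. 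To upgrade "bounded away from~$0$ for each~$\xx$" to "holds for some~$\xx$ whp", I would use near-independence of the events $\{|X_\xx - \mu| \ge \eps\mu\}$ across a suitably sparse family of roots~$\xx$ (roots whose extension-neighbourhoods are vertex-disjoint), so that a Borel--Cantelli / second-moment computation over that family yields the claim.

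The main obstacle I anticipate is the case~(a) of part~\ref{thm_tail0}: one needs a genuine \emph{lower} bound on $\Pr(|X_\xx - \mu| \ge \eps\mu)$ in a regime where $\eps\mu$ is smaller than the standard deviation, and such anticoncentration lower bounds are exactly what the introduction flags as delicate. The cleanest route is probably a two-moment argument on a carefully truncated variable — show $\E X_\xx = \mu$ and $\Var X_\xx \asymp \sigma^2 \gg (\eps\mu)^2$, then deduce via the Paley--Zygmund inequality (applied to $(X_\xx - \mu)^2$, after controlling the fourth moment of $X_\xx - \mu$, which is again polynomial-concentration territory) that $|X_\xx - \mu| \ge \eps\mu$ with probability at least some constant. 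The fourth-moment bound $\E(X_\xx - \mu)^4 = O(\sigma^4)$ is the technical heart: it should follow from the asymptotic normality of~$X_\xx$ (\refCl{cl:mom}~\ref{cl:mom:asymp} and the variance estimate, as hinted in the footnotes) together with uniform integrability, but making this quantitative and uniform in~$p$ over the full range, including where $p\to 0$ and the normal approximation degrades, will require some care. The disjointification step — choosing $\Omega(n^{v_G})$ roots with pairwise disjoint extension structures so the deviation events become essentially independent — is routine but needs $n$ large relative to~$v_H$, which is harmless.
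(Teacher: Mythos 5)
Your part~\ref{thm_tail1} follows the same skeleton as the paper (a single-root tail bound that beats $n^{-v_G}$, then a union bound over roots), but the paper gets the single-root bound much more cheaply than you propose: it applies Markov's inequality to a \emph{fixed} even central moment $\E(X_\xx-\mu)^{2m}=O\bigl((\mu^2/\Phi)^m\bigr)$ (\refCl{cl:mom}~\ref{cl:mom:order}), with $m$ a large constant depending on $v_G$; since $(t/\mu)^2\Phi\ge n^{c}$, each of the $m$ factors contributes $n^{-c}$ and no exponential tail is needed. Of your suggested tools, a Kim--Vu-type polynomial concentration bound would plausibly also work, but a plain bounded-differences/Azuma argument would not: the worst-case effect of a single edge on $X_\xx$ is of order $n^{v_H-v_G-2}$ or larger, which makes the resulting exponent vacuous. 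Your case~(b) of part~\ref{thm_tail0} is essentially right (though garbled): the clean statement is that $\Pr(X_\xx\ge 1)\le\min_J\Pr(X_{G,J}(\xx)\ge1)\le\Phi\to0$, so whp $X_\xx=0$ and $|X_\xx-\mu|=\mu\ge\eps\mu$ for a single fixed root.

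The genuine gap is in case~(a) of part~\ref{thm_tail0}. Paley--Zygmund applied to $(X_\xx-\mu)^2$, with the fourth-moment bound from \refCl{cl:mom}~\ref{cl:mom:order}, only yields $\Pr(|X_\xx-\mu|\ge\eps\mu)\ge c$ for a \emph{constant} $c>0$, and your proposed upgrade --- choosing many roots with ``pairwise disjoint extension structures'' so that the deviation events become essentially independent --- cannot work for general $(G,H)$. The theorem covers ungrounded rooted graphs (e.g.\ example~(b) of \refF{fig_primal}), for which all the $X_\xx$ are essentially the \emph{same} random variable (each is within a lower-order error of the total number of copies of $H-V(G)$); the deviation events for different roots then coincide up to negligible corrections, no sparse family of roots decorrelates them, and a second-moment/Borel--Cantelli boost leaves the probability stuck at~$c$. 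Even in grounded cases, showing approximate independence of the events $\{|X_\xx-\mu|\ge\eps\mu\}$ at constant probability is a serious second-moment computation (this is exactly the difficulty \refS{s_strictly_balanced} works around with auxiliary disjoint-extension events). The paper avoids all of this: under $\Phi(1-p)\to\infty$, the method of moments gives $(X_\xx-\mu)/\sigma\Rightarrow N(0,1)$ (\refCl{cl:mom}~\ref{cl:mom:asymp}), and since $\eps\mu/\sigma\asymp\sqrt{\eps^2\Phi/(1-p)}\to0$, for every fixed $\delta>0$ one gets $\limsup_n\Pr(|X_\xx-\mu|<\eps\mu)\le\Pr(|\eta|<\delta)$; letting $\delta\to0$ shows the deviation occurs whp already for a \emph{single} root, so no boosting over roots is needed. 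You should replace the Paley--Zygmund-plus-independence step by this anticoncentration-from-a-continuous-limit argument.
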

\begin{remark}\label{rem:thm_generaltail}%
In~\ref{thm_tail1}, the conclusion~\eqref{eq:thm_generaltail:1} holds with probability~$1 - o(n^{-\tau})$ for any constant~$\tau > 0$.
\end{remark}
%
%
\noindent
We defer the simple proof of Theorem~\ref{thm_generaltail} to Appendix~\ref{apx:general}, and only mention the main ideas here. 
Claim~\ref{thm_tail0} exploits that~$X_\xx$ is asymptotically normal when $\Phi(1-p) \to \infty$. 
Claim~\ref{thm_tail1} is based on Markov's inequality and a central moment estimate $\E (X_\xx - \mu)^{2m} \le C_m \sigma^{2m} \le D_m (\mu^2/\Phi)^m$ that is a by-product of the usual asymptotic normality proof via the method of moments (see Claim~\ref{cl:mom} in Appendix~\ref{apx:general}).  
This approach for obtaining tail estimates `without much effort' does not seem to be as widely known in probabilistic combinatorics, 
and we believe that it will be useful in other applications
(e.g., it yields a simple direct proof of~\cite[Corollary~4]{S90b}). %

\section{Strictly balanced and grounded case (Theorem~\ref{thm_strictly_balanced})}\label{s_strictly_balanced}
In this section we prove the threshold~\eqref{eq:main:strbal} of Theorem~\ref{thm_strictly_balanced}~(i) for strictly balanced rooted graphs~$(G,H)$ that are grounded (see \refS{sec:ext:non} for the less interesting ungrounded case). 

The~$0$-statement in~\eqref{eq:main:strbal} is the main difficulty, and here the plan is to 
use a second moment argument to show the existence of a root~$\xx \in \osets$ with too many $(G,H)$-extensions, i.e., with~$X_{\xx} \ge (1+\eps)\mu$. 
Unfortunately, even an asymptotic estimate of the relevant first moment is challenging, 
since the upper tail proba\-bi\-li\-ty~$\Pr(X_{\xx} \ge (1+\eps)\mu)$ is hard to estimate up to a~$1+o(1)$ factor 
(this is an instance of the `infamous' upper tail problem~\cite{JR2002,SW18}). 
To sidestep this technical difficulty, we instead show the existence of a root~${\xx \in \osets}$ 
which attains~$X_{\xx}=\ceil{(1+\eps)\mu}$ due to exactly~$\ceil{(1+\eps)\mu}$ extensions that are vertex-disjoint outside of~$\xx$. 
The crux is that these auxiliary events are more tractable: we can estimate the relevant first and second moments up to the required~$1+o(1)$ factors  
via a careful mix of Harris' Lemma~\cite{Harris}, Janson's inequality~\cite{J90,BS,RiordanWarnke2015}, and counting arguments. 
It turns out that here the extra assumption~$\eps \ge n^{-\alpha}$ is helpful: 
it will allow us to focus on fairly small edge probabilities~$p=p(n)$ that are close to~$n^{-1/d(G,H)}$,  
which intuitively makes it easier to show that various events are approximately independent (as tacitly required by the second moment method);  
see~\refS{sec:0statement} for the~details.

The~$1$-statement in~\eqref{eq:main:strbal} is simpler (and nowadays fairly routine). 
For edge probabilities~$p=p(n)$ that are close to~$n^{-1/d(G,H)}$, we use a standard union bound argument, estimating the lower tail~$\Pr(X_{\xx} \le (1-\eps)\mu)$ via Janson's inequality~\cite{J90,JLR,RiordanWarnke2015} and the upper tail~$\Pr(X_{\xx} \ge (1+\eps)\mu)$ via an inequality of Warnke~\cite{WUT}.
For edge probabilities~$p=p(n)$ much larger than~$n^{-1/d(G,H)}$, 
it turns out that we can simply use the partial result \refT{thm_generaltail}~\ref{thm_tail1} due to the extra assumption~$\eps \ge n^{-\alpha}$;  
see~\refS{sec:1statement} for the~details.

\subsection{Technical preliminaries}\label{s_strictly_balanced:prelim}
Our upcoming arguments exploit two standard properties of strictly balanced rooted graphs: 
(i)~for fairly small edge probabilities~$p=p(n)$, the expectation~$\mu=\mu_{G,H}$ is significantly smaller than any other expectation~$\mu_{G,J}$ with~$G \subsetneq J \subsetneq H$ (note that~$\mu_{G,H}/\mu_{G,J} \asymp n^{v_H - v_J}p^{e_H - e_J} \ll 1$ via~\eqref{eq:lem:density:subs} below), 
and (ii) after removing the root vertices from~$H$, the remaining graph~${H - V(G)}$ is connected. 
Both mimic well-known properties from the unrooted case, 
so we defer the routine 
proof of \refL{lem:StrBal} to~\refS{s_strictly_balanced:prelim:deferred}. 
\begin{lemma}\label{lem:StrBal}%
For any strictly balanced rooted graph~$(G,H)$, the following hold:%
\begin{romenumerate}
\item\label{eq:StrBal:density}
There is a constant $\beta = \beta(G,H) > 0$ such that, for all~$p=p(n) \in [0,1]$ with~$p= O(n^{-1/d(G,H) + \beta})$, 
\begin{equation}
\label{eq:lem:density:subs}
\max_{G \subsetneq J \subsetneq H} n^{v_H - v_J}p^{e_H - e_J} \ll n^{-\beta}. \hspace{2.5em}
\vspace{-0.125em}%
\end{equation}%
\item\label{eq:StrBal:connected}
The graph~${H - V(G)}$, obtained from~$H$ by deleting the vertices of~$G$, is connected. 
\end{romenumerate}
\end{lemma}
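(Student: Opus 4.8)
The plan is to derive both parts directly from the defining inequality $d(G,J) < d(G,H)$ for all $G \subsetneq J \subsetneq H$, mimicking the well-known unrooted analogues.

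For part~\ref{eq:StrBal:density}, set $d := d(G,H)$. Since $G$ is an induced subgraph of~$H$, any $J$ with $G \subsetneq J \subseteq H$ and $v_J = v_G$ would have $E(J) = E(G)$ and hence equal~$G$; so every $J$ in the relevant range satisfies $v_J > v_G$, the density $d(G,J)$ is well defined, and (as there are only finitely many such~$J$) strict balancedness yields a uniform gap
\[
\delta := \min_{G \subsetneq J \subsetneq H} \bigpar{d - d(G,J)} > 0 .
\]
Using the identities $e_H - e_G = d\,(v_H - v_G)$ and $e_J - e_G = d(G,J)(v_J - v_G)$, a one-line rearrangement of exponents gives, for $G \subsetneq J \subsetneq H$ with $v_J < v_H$,
\[
n^{v_H - v_J} p^{e_H - e_J} = \bigpar{n p^d}^{v_H - v_J} \cdot p^{(d - d(G,J))(v_J - v_G)} ,
\]
while for $v_J = v_H$ one trivially has $n^{v_H - v_J} p^{e_H - e_J} = p^{e_H - e_J} \le p$. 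Assuming $p = O(n^{-1/d + \beta})$, we have $n p^d = O(n^{d\beta})$, and since $(d - d(G,J))(v_J - v_G) \ge \delta$ with $p \le 1$, also $p^{(d - d(G,J))(v_J - v_G)} \le p^\delta = O(n^{(-1/d + \beta)\delta})$; using $0 \le v_H - v_J \le v_H$, both cases are then $O(n^{\theta})$ for an exponent $\theta = \theta(\beta, d, v_H, \delta)$ which becomes $< -\beta$ once the constant $\beta > 0$ is chosen small enough. Taking the maximum over the finitely many~$J$ then yields~\eqref{eq:lem:density:subs}.

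For part~\ref{eq:StrBal:connected}, I would argue by contradiction. Suppose $H - V(G)$ has components $C_1, \dots, C_k$ with $k \ge 2$, and for each~$i$ let $J_i$ be the subgraph of~$H$ on vertex set $V(G) \cup V(C_i)$ containing all edges of~$H$ inside that set. Then $V(C_i) \neq \emptyset$ and $k \ge 2$ force $G \subsetneq J_i \subsetneq H$. Every edge of~$H$ not contained in~$V(G)$ has an endpoint in some~$C_i$, and — as $H - V(G)$ has no edges between distinct components — lies entirely inside $V(G) \cup V(C_i)$; thus these edges are partitioned among the~$J_i$, so that $e_H - e_G = \sum_{i} (e_{J_i} - e_G)$, and likewise (partitioning $V(H) \setminus V(G)$) $v_H - v_G = \sum_{i} (v_{J_i} - v_G)$. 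The mediant inequality then gives
\[
d(G,H) = \frac{\sum_{i} (e_{J_i} - e_G)}{\sum_{i} (v_{J_i} - v_G)} \le \max_{i} \frac{e_{J_i} - e_G}{v_{J_i} - v_G} = \max_{i} d(G, J_i) ,
\]
which contradicts strict balancedness since each $J_i \subsetneq H$. Hence $H - V(G)$ is connected.

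Both parts are routine, as the lemma advertises; the only place that needs a little care is the exponent bookkeeping in part~\ref{eq:StrBal:density} — specifically, checking that a single constant $\beta > 0$ works simultaneously for all subgraphs~$J$, including the degenerate ones with $v_J = v_H$, which is precisely what the finiteness of the family and the uniform gap~$\delta$ guarantee.
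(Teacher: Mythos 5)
Your proof is correct and follows essentially the same route as the paper: part~\ref{eq:StrBal:connected} is the identical component-splitting/mediant argument, and part~\ref{eq:StrBal:density} is the same routine density bookkeeping, merely factoring $n^{v_H-v_J}p^{e_H-e_J}$ as $(np^{d})^{v_H-v_J}p^{(d-d(G,J))(v_J-v_G)}$ and using the uniform gap~$\delta$ directly, where the paper instead passes through the equivalent inequality $d(J,H)>d(G,H)$ and writes the quantity as $(n^{1/d(J,H)}p)^{e_H-e_J}$. Both treatments of the degenerate case $v_J=v_H$ and the verification that $v_J>v_G$ match the paper's.
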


\subsection{The $0$-statement}\label{sec:0statement} 
Our second-moment-based proof of the \zerost{} in~\eqref{eq:main:strbal} of Theorem~\ref{thm_strictly_balanced} hinges on the following key lemma.
Given a root~$\xx \in \osets$, let~$\cE_{\xx}$ denote the event that, in~$\Gnp$, the root~$\xx$ has 
exactly~$\dex:= \ceil{(1 + \eps)\mu}$ many $(G,H)$-extensions, and all of them are pairwise vertex-disjoint (i.e., sharing no vertices outside~$\xx$). 
We also say that two roots~$\xx_1, \xx_2 \in \osets$ are \emph{disjoint} if they share no elements as (unordered) sets. 
\begin{lemma}\label{lem:main}%
Let $(G,H)$ be a rooted graph that is strictly balanced and grounded. 
There are constants~$c, \gamma > 0$ such that, 
for all~$\eps=\eps(n) \in (0,1]$ and~$p=p(n) \in [0,1]$ with~$p \le n^{-1/d(G,H) + \gamma}$, $\mu \ge 1/2$ and~$\eps^2\mu \le c \log n$, 
the following holds: 
for all roots~$\xx \in \osets$ we have 
\begin{equation}\label{eq:pr:lb}
\Pr(\cE_{\xx}) \gg n^{-1/2},
\end{equation}
and for all disjoint roots~$\xx_1,\xx_2 \in \osets$ we have 
\begin{equation}\label{eq:pr:ub}
\Pr(\cE_{\xx_1}, \: \cE_{\xx_2}) \le (1 + o(1)) \Pr(\cE_{\xx_1})\Pr(\cE_{\xx_2}).
\end{equation}
\end{lemma}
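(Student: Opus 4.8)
The plan is to determine $\Pr(\cE_\xx)$ up to a $1+o(1)$ factor, showing $\Pr(\cE_\xx)=(1+o(1))\mu^z e^{-\mu}/z!=(1+o(1))\Pr(\mathrm{Poisson}(\mu)=z)$, and then to read off \eqref{eq:pr:lb} by Stirling and \eqref{eq:pr:ub} after a routine estimate for overlapping configurations. Throughout write $a:=v_H-v_G$, $b:=e_H-e_G$, and $m:=\mu p^{-b}$, so that $m$ is exactly the number of potential $(G,H)$-extensions of any fixed root and $m\asymp n^a$. The assumptions $p\le n^{-1/d(G,H)+\gamma}$ and $\mu\ge 1/2$ pin $p$ to a small window around $n^{-1/d(G,H)}$, whence $\mu=\Omega(1)$ and $\mu=n^{O(\gamma)}$ (so also $z=\Theta(\mu)=n^{O(\gamma)}$). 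This polynomial bound on $\mu$ is the reason for the extra hypothesis on $p$: every error term below will have the shape $n^{O(\gamma)}\cdot n^{-\Omega(1)}$, the negative power coming from strict balancedness via the density bound~\eqref{eq:lem:density:subs}, so it is $o(1)$ once $\gamma$ is chosen small enough --- which is the accuracy one needs, since the main term carries the factor $e^{-\mu}$ with $\mu$ possibly unbounded.

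For a family $\cF=\{S_1,\dots,S_z\}$ of potential $\xx$-extensions that are pairwise vertex-disjoint outside $\xx$, let $E_\cF:=\bigcup_i E(S_i)$ (a disjoint union of $zb$ edges) and $q_\cF:=\Pr(\text{no }(G,H)\text{-extension of }\xx\text{ outside }\{S_1,\dots,S_z\}\text{ is present}\mid E_\cF\subseteq\Gnp)$. Since $H-V(G)$ is connected (\refL{lem:StrBal}\ref{eq:StrBal:connected}), any extension all of whose edges lie in $E_\cF$ must be one of the $S_i$, so $\cE_\xx$ is the disjoint union over such $\cF$ of the events $\{E_\cF\subseteq\Gnp\}\cap\{\text{no }(G,H)\text{-extension of }\xx\text{ outside }\cF\}$, and $\Pr(\cE_\xx)=\sum_\cF p^{zb}q_\cF$. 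The heart of the argument is the uniform estimate $q_\cF=(1+o(1))e^{-\mu}$. For the lower bound I would apply Harris' inequality to the decreasing events $\{S\not\subseteq\Gnp\}$ over extensions $S\notin\cF$, giving $q_\cF\ge\prod_{S\notin\cF}(1-p_S)$ with $p_S:=\Pr(S\subseteq\Gnp\mid E_\cF)$: extensions vertex-disjoint from $\bigcup_i V(S_i)$ have $p_S=p^b$ and are all but $O(z n^{a-1})$ of the $m$ extensions, contributing $\mu-o(1)$; the remaining ``reusing'' extensions share $t\ge 1$ vertices (and at most $s$ edges with $s/t<d(G,H)$) with the $S_i$, so by~\eqref{eq:lem:density:subs} they contribute only $o(1)$ in total; and $\sum_S p_S^2=o(1)$ as well. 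For the matching upper bound I would use Janson's inequality, $q_\cF\le\exp(-M_\cF+\Delta_\cF/2)$, with $M_\cF=\sum_{S\notin\cF}p_S=\mu+o(1)$ and clustering term $\Delta_\cF=o(1)$ (two extensions sharing an edge form a configuration strictly denser than $H_G$, so~\eqref{eq:lem:density:subs} applies). Combining this with the count $\#\{\cF\}=(1+o(1))m^z/z!$ (pairwise-disjoint $z$-families make up all but an $O(z^4/n)=o(1)$ fraction of all $z$-subsets of extensions) yields $\Pr(\cE_\xx)=(1+o(1))m^z p^{zb}e^{-\mu}/z!=(1+o(1))\mu^z e^{-\mu}/z!$. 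Now~\eqref{eq:pr:lb} follows from Stirling: $\mu^z e^{-\mu}/z!\ge c'\mu^{-1/2}\exp\bigpar{-(z-\mu)^2/(2\mu)}$, and since $(z-\mu)^2/(2\mu)\le \eps^2\mu/2+2\le (c/2)\log n+2$ while $\mu^{-1/2}\ge n^{-O(\gamma)}$, this is $\gg n^{-1/2}$ provided $c$ and $\gamma$ are small enough (say $c+b\gamma<1$).

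For~\eqref{eq:pr:ub}, with $\xx_1,\xx_2$ disjoint, I would expand $\Pr(\cE_{\xx_1}\cap\cE_{\xx_2})=\sum_{\cF_1,\cF_2}\Pr(\cG_{\cF_1}\cap\cG_{\cF_2})$ over pairs of pairwise-disjoint families, where $\cG_{\cF_i}$ denotes the event that, among the $\xx_i$-extensions, exactly those in $\cF_i$ are present. When the vertex sets $V_{\cF_1}$ and $V_{\cF_2}$ are disjoint one has $\Pr(\cG_{\cF_1}\cap\cG_{\cF_2})=p^{2zb}q_{12}$, where $q_{12}$ is the probability that no extension of either root outside $\cF_1\cup\cF_2$ is present, given $E_{\cF_1}\cup E_{\cF_2}\subseteq\Gnp$; Janson's inequality applied to the combined family of forbidden extensions gives $q_{12}\le(1+o(1))e^{-2\mu}$ --- here one uses~\eqref{eq:lem:density:subs} once more to bound the ``cross'' terms (an $\xx_1$-extension reusing edges of $E_{\cF_2}$, or a pair of extensions rooted at the two different $\xx_i$ sharing an edge), and one uses that $(G,H)$ is grounded to rule out the degenerate possibility of an $\xx_1$-extension lying entirely inside $E_{\cF_2}$ (a grounded extension has a root--nonroot edge, and the roots lie outside $V_{\cF_2}$). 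Summing over the at most $(\#\{\cF\})^2$ such pairs gives $(1+o(1))\bigpar{m^z p^{zb}e^{-\mu}/z!}^2=(1+o(1))\bigpar{\mu^z e^{-\mu}/z!}^2\le(1+o(1))\Pr(\cE_{\xx_1})\Pr(\cE_{\xx_2})$. The remaining pairs, for which $V_{\cF_1}\cap V_{\cF_2}\ne\emptyset$, are only an $o(1)$ fraction of the disjoint ones when they share a single vertex, and even rarer (by~\eqref{eq:lem:density:subs}) when they share edges; keeping the bound $q_{12}\le(1+o(1))e^{-2\mu}$, which remains valid, their total contribution is $o\bigpar{(\mu^z e^{-\mu}/z!)^2}=o\bigpar{\Pr(\cE_{\xx_1})\Pr(\cE_{\xx_2})}$, which establishes~\eqref{eq:pr:ub}.

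The main obstacle, I expect, is the uniform estimate $q_\cF=(1+o(1))e^{-\mu}$ (and its two-root analogue $q_{12}=(1+o(1))e^{-2\mu}$): the real content is that conditioning on $E_\cF$ being present perturbs the ``no extension'' probability only by a $1+o(1)$ factor, which calls for a careful enumeration of the ways an extension can reuse edges of the conditioned structure together with a check --- using both parts of \refL{lem:StrBal}, the groundedness hypothesis, and the restriction of $p$ to a small window around $n^{-1/d(G,H)}$ --- that each such contribution is genuinely $o(1)$ rather than merely $o(\mu)$.
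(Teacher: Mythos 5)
Your proposal follows essentially the same route as the paper: restrict to the window $p\le n^{-1/d(G,H)+\gamma}$ so that $z=n^{O(\gamma)}$, decompose $\cE_{\xx}$ over families of pairwise vertex-disjoint extensions, estimate the exclusion probability from below by Harris and from above by Janson using the strictly-balanced density bound, count the families, and finish the first moment with a local-limit/Stirling computation; the Poisson normalization $\mu^z e^{-\mu}/z!$ versus the paper's binomial $\binom{N}{z}p^{bz}(1-p^b)^{N-z}$ is immaterial. The one step where your justification is genuinely incomplete is the overlapping pairs $(\cF_1,\cF_2)$ in the second moment: when a \emph{full} extension $T$ of $\xx_2$ has $E(T)\subseteq E_{\cF_1}$, the probability boost is $p^{-(e_H-e_G)}$, and the bound \eqref{eq:lem:density:subs} you invoke does not apply, since it only covers proper intermediate graphs $G\subsetneq J\subsetneq H$ and this is the case $J=H$. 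This is exactly where groundedness must carry the argument (as in the paper's Claims~\ref{cl:finiteOverlaps}--\ref{cl:conditionalsum}): since $H-V(G)$ is connected and $(G,H)$ is grounded, any extension of $\xx_2$ contained in $E_{\cF_1}$ must pass through a vertex of $\xx_1$, so only an $O(z/n)$ fraction of families $\cF_1$ admit such a $T$, and at most $v_G$ vertex-disjoint ones can occur; combining this rarity with the factor $z/N$ for forcing $T\in\cF_2$ beats the boost $p^{-(e_H-e_G)}=N/\mu$. You flag this as the main obstacle and name the right ingredients, but as written the claim ``even rarer when they share edges'' is not backed by the tool you cite in the critical case; making it precise requires the groundedness argument above rather than \eqref{eq:lem:density:subs}.
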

\begin{proof}[Proof of the \zerost{} in~\eqref{eq:main:strbal} of Theorem~\ref{thm_strictly_balanced}] 
Let~$c, \gamma>0$ be the constants given by Lemma~\ref{lem:main}. 
Fix arbitrary $0 < \alpha < \gamma/2$.
First, when~$p > n^{-1/d(G,H) + \gamma}$, then~$\eps \ge n^{-\alpha}$ and \refR{rem:Phibig}~\ref{eq:Phibig:iv} 
imply~$\eps^2\mu \ge {n^{-2\alpha} \cdot \Phi_{G,H}} = \Omega(n^{\gamma - 2\alpha}) \gg \log n$, so the condition of the \zerost{} cannot be satisfied and hence there is nothing to prove. 
Next, when~$\mu < 1/2$, then~$(1+\eps) \mu \le 2 \mu < 1$ and~$\eps \le 1$ imply that the interval~$\left((1-\eps)\mu, (1 + \eps)\mu\right)$ contains no integers, and so the \zerost{} again holds trivially. 

Thus we can henceforth assume~$\mu \ge 1/2$ and~$p \le n^{-1/d(G,H) + \gamma}$, as required by Lemma~\ref{lem:main}. 
For convenience, we set~$s := \lfloor n / v_G \rfloor \asymp n$, and choose disjoint roots~$\xx_1, \dots, \xx_s \in \osets$. 
Writing~$Y := |\left\{ i \in [s] : \cE_{\xx_i} \text { holds} \right\}|$, to prove the \zerost{} of Theorem~\ref{thm_strictly_balanced} we shall now show that~$Y > 0$~\whp{}, i.e., that~$\Pr(Y>0) \to 1$ as~$n \to \infty$. 
Using~\eqref{eq:pr:lb} we obtain~$\E Y = \sum_{1 \le i \le s}\Pr(\cE_{\xx_i}) \gg s \cdot n^{-1/2} \asymp n^{1/2} \to \infty$. 
Together with~\eqref{eq:pr:ub} it follows~that 
\begin{equation*}\label{eq:mu2:ub}
\begin{split}
\E Y^2 & \le \sum_{1 \le i ,j \le s: \; i \neq j} \Pr(\cE_{\xx_i}, \: \cE_{\xx_j}) + \sum_{1 \le i \le s} \Pr(\cE_{\xx_i})  \; \le \; (1 + o(1)) \cdot (\E Y)^2 + \E Y \; \sim \; (\E Y)^2. 
\end{split}
\end{equation*}
Now Chebyshev's inequality readily yields $\Pr(Y=0) \le \Var Y/(\E Y)^2 \to 0$ as~$n \to \infty$, completing the~proof. 
\end{proof}

The remainder of Section~\ref{sec:0statement} is dedicated to the proof of Lemma~\ref{lem:main}. 
For concreteness, for~$\beta>0$ as given by \refL{lem:StrBal}~\ref{eq:StrBal:density}, we choose the constants~$\gamma, c \in (0,1/2)$ such that  
\begin{equation}\label{eq:gammadef}
\gamma e_H \; < \; \min\bigcpar{\beta/v_H, \: \beta/2, \: 1/2, \: 1-2c}. 
\end{equation}
Recalling~$\mu \asymp n^\vGH p^\eGH$ and~$\eps \le 1$, using the assumptions~$\mu \ge 1/2$ and~$p \le n^{-1/d(G,H) + \gamma}$, we infer 
\begin{equation}\label{eq:mumupper}
1/2 \: \le \: \mu \: \le \: \dex = \ceil{(1 + \eps)\mu} \: \le \: O(n^{\gamma e_H}) \ll \min\bigcpar{n^{1/2},n^{\beta/2}} , 
\end{equation}
and
\begin{equation}\label{eq:p_half}
p \le \Bigpar{n^{-(v_H-v_G) + \gamma(\eGH)}}^{\frac{1}{\eGH}} \le \Bigpar{n^{-1 + 1/2}}^{\frac{1}{\eGH}} \ll 1/2,
\end{equation}
with room to spare. 
With foresight, given~$\xx \in \osets$, we denote by~$N = N_{G,H}(\xx)$ the number of $(G,H)$-extensions of~$\xx$ in~$K_n$. 
Note that $N \asymp n^{\vGH}$ does not depend on the particular choice of~$\xx$.

\subsubsection{The first moment: inequality~\eqref{eq:pr:lb}}\label{sec:first}
We start with~\eqref{eq:pr:lb}, i.e., a lower bound for~$\Pr(\cE_{\xx})$. 
Recall that every $\xx \in \osets$ has~$N$ extensions in~$K_n$. 
The plan is to show that~$\Pr(\cE_{\xx})$ is comparable with $\Pr(\Bin(N,p^\eGH) = \dex)$. 
More precisely, we will show that 
\begin{equation}\label{eq:pr:lb:bin}
\Pr(\cE_{\xx}) \; \ge \; (1+o(1)) \cdot \binom{N}{\dex} p^{(\eGH)\dex} (1-p^\eGH)^{N - \dex} .
\end{equation}
In view of $\dex \approx (1+\eps)\mu = (1 + \eps) Np^\eGH$, using Stirling's formula 
it then will be routine to deduce that the lower bound in~\eqref{eq:pr:lb:bin} is~$\Theta(z^{-1/2}) \cdot \e^{-\Theta(\eps^2\mu)}$, 
which together with~\eqref{eq:gammadef}--\eqref{eq:mumupper} and the assumption~$\eps^2\mu \le c\log n$ 
will eventually imply the desired inequality~\eqref{eq:pr:lb}; see \eqref{eq:MoivreLaplace}--\eqref{eq:MoivreLaplace2}~below.

Turning to the technical details, given $\xx \in \osets$, 
let~$\fH(\xx)$ denote the set of all (unordered) collections of~$\dex = \ceil{(1 + \eps)\mu}$ vertex-disjoint $(G,H)$-extensions of~$\xx$ in~$K_n$. 
Given~$\cC \in \fH(\xx)$, let~$\cC^c$ denote the remaining~${N - \dex}$ extensions of~$\xx$ in~$K_n$. 
Given a collection~$\cS$ of extensions of~$\xx$, we write~$\cI_{\cS}$ for the event that all extensions in~$\cS$ are present in~$\Gnp$, 
and~$\cD_{\cS}$ for the event that all extensions in~$\cS$ are not present in~$\Gnp$.
Note that  
\begin{equation}\label{eq:er}
  \Pr(\cE_{\xx}) = \sum_{\cC \in \fH(\xx)} \Pr(\cI_{\cC} , \: \cD_{\cC^c}) = \sum_{\cC \in \fH(\xx)} \Pr(\cI_{\cC}) \Pr(\cD_{\cC^c} \mid \cI_{\cC}) \ge |\fH(\xx)| \min_{\cC \in \fH(\xx)} \Pr(\cI_{\cC})\Pr(\cD_{\cC^c} \mid \cI_{\cC}),
\end{equation}
where the minimum is of course only formal: by symmetry the probabilities are the same for every~$\cC \in \fH(\xx)$.
To estimate~$|\fH(\xx)|$, note that given~$i \le z$ vertex-disjoint extensions, the number of choices for another vertex-disjoint extension is $N - O(z n^{v_H - v_G - 1})$.
Since~$\fH(\xx)$ consists of unordered collections of extensions, 
using~$N \asymp n^{\vGH}$ and~$\dex \ll n^{1/2}$ (see~\eqref{eq:mumupper}) together with~$1 - x = e^{-x(1 + o(1))}$ as~$x \to 0$ 
it follows~that
\begin{equation}\label{eq:Hr}
  |\fH(\xx)| = \frac{\left(N- O(\dex n^{\vGH-1})\right)^\dex}{\dex!} 
	= \frac{N^\dex}{\dex!} \cdot \Bigpar{1-O\bigpar{z/n}}^z
\sim \frac{N^\dex}{\dex!} \sim \binom{N}{\dex} . 
\end{equation}
Since the extensions in $\cC \in \fH(\xx)$ are disjoint, we have 
\begin{equation}\label{eq:I}
\Pr(\cI_{\cC})= p^{(\eGH)\dex}.
\end{equation}
For the remaining lower bound on~$\Pr(\cD_{\cC^c} | \cI_{\cC})$, 
the idea is to apply Harris' Lemma~\cite{Harris} 
and then use \refL{lem:StrBal}~\ref{eq:StrBal:density} to show that the effect of `overlapping' pairs of extensions is negligible. 
\begin{claim}\label{cl:D:lower}%
Let~$\xx \in \osets$. Then, for all~$\cC \in \fH(\xx)$, we have 
\begin{equation}\label{eq:D:lower}
\Pr(\cD_{\cC^c} | \cI_{\cC}) \; \ge \; (1+o(1)) \cdot (1-p^\eGH)^{N-\dex} .
\end{equation}
\end{claim}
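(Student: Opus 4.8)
The plan is to condition on $\cI_{\cC}$ (which only ``reveals'' the edges inside the extensions of $\cC$, all of which are forced to be present) and then estimate the probability that none of the $N-\dex$ remaining extensions in $\cC^c$ appears. First I would note that, since edges forced by $\cI_{\cC}$ only make it \emph{more} likely for other extensions to appear, conditioning on $\cI_{\cC}$ can only decrease $\Pr(\cD_{\cC^c})$; more precisely, $\Pr(\cD_{\cC^c}\mid \cI_{\cC})$ equals the probability that, in $\Gnp$ on the edge set not forced by $\cI_{\cC}$, none of a certain list of ``partial'' extensions (the pieces of the $\cC^c$-extensions lying outside the $\cC$-edges) is present. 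I would then apply Harris' (FKG) inequality to this downward-closed event: the events ``extension $e$ is absent'', $e \in \cC^c$, are all decreasing in the remaining random edges, so
\[
\Pr(\cD_{\cC^c}\mid \cI_{\cC}) \;\ge\; \prod_{e \in \cC^c} \Pr(\text{$e$ absent}\mid \cI_{\cC}).
\]
Each such factor is at least $1-p^{\eGH}$ (an extension that shares no forced edge has probability exactly $p^{\eGH}$ of being present; one that shares $k\ge 1$ forced edges has conditional presence probability $p^{\eGH-k} \ge p^{\eGH}$, which only helps the lower bound if we are careful about the direction). So $\Pr(\cD_{\cC^c}\mid\cI_{\cC}) \ge (1-p^{\eGH})^{N-\dex}$ up to the correction from extensions sharing edges with $\cC$, which must be quantified.

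The heart of the argument is showing that this correction is $1+o(1)$. An extension $e \in \cC^c$ that shares at least one vertex outside $\xx$ with some extension of $\cC$ (call it ``bad'') has a conditional absence probability that may differ from $1-p^{\eGH}$ only because some of its $\eGH$ edges may already be forced present. The number of bad extensions is small: the number of $(G,H)$-extensions of $\xx$ that meet a fixed vertex set of size $O(\dex\, v_H)$ in at least one non-root vertex is, by a union bound over which proper subgraph $G \subsetneq J \subsetneq H$ is ``pinned'' to that vertex set, at most $O\big(\dex \cdot \max_{G\subsetneq J\subsetneq H} n^{\vGH - (v_J - v_G) }\big)$ times lower-order factors; invoking \refL{lem:StrBal}~\ref{eq:StrBal:density} this is $O(\dex \cdot n^{\vGH} \cdot n^{-\beta})= o(n^{\vGH}) = o(N)$, using $\dex \ll n^{\beta/2}$ from~\eqref{eq:mumupper}. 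Since each bad extension contributes a factor between $p^{\eGH}$ (trivially $\ge$ the good factor, hence harmless) and $1$ to the product — or rather, since the discrepancy per bad extension between the true factor and $(1-p^{\eGH})$ is at most $O(p^{\eGH})$ in the relevant direction — the total multiplicative discrepancy is $\exp\big(O(p^{\eGH}) \cdot o(N)\big)$; recalling $Np^{\eGH} \asymp \mu \ll n^{1/2}$ and that the number of bad extensions is $o(N)$, one sees this is $1+o(1)$.

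The step I expect to be the main obstacle is the bookkeeping in the previous paragraph: being genuinely careful that conditioning on forced edges moves each factor in the \emph{favorable} direction (so Harris plus ``throw away the bad factors'' really gives a valid lower bound), and getting a clean quantitative bound on the number of extensions in $\cC^c$ that share an edge — not merely a vertex — with $\cC$, since it is shared \emph{edges} that perturb the conditional probabilities. Once the count of such extensions is shown to be $o(N)$ via \refL{lem:StrBal}~\ref{eq:StrBal:density} (this is exactly the strict-balancedness input), the estimate $(1-p^{\eGH})^{N-\dex}(1+o(1))$ follows, and combining \eqref{eq:er}, \eqref{eq:Hr}, \eqref{eq:I} with Claim~\ref{cl:D:lower} yields~\eqref{eq:pr:lb:bin}.
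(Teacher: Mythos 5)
Your skeleton is the same as the paper's: condition on $\cI_{\cC}$, apply Harris to the decreasing events that the extensions of $\cC^c$ are absent, and then control the perturbation coming from extensions that share edges with $\cC$ using \refL{lem:StrBal}~\ref{eq:StrBal:density}. However, the quantitative step where you bound the correction has a genuine error, and it starts with a sign confusion. An extension $H_2\in\cC^c$ with $e(H_2\cap F)=k\ge 1$ forced edges has conditional absence probability $1-p^{\eGH-k}$, which is \emph{smaller} than $1-p^{\eGH}$ (larger presence probability means smaller absence probability), so these factors \emph{hurt} the lower bound rather than being ``harmless''; your assertion that each factor is ``at least $1-p^{\eGH}$'' is backwards. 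Consequently the per-extension multiplicative discrepancy is of order $p^{\eGH-k}$ — which for $k$ close to $\eGH$ can be as large as $\Theta(p)$, i.e.\ polynomially larger than your claimed $O(p^{\eGH})$ — and the quantity that must be shown to be $o(1)$ is the \emph{weighted} sum $\sum_{H_2:\,e(H_2\cap F)\ge 1} p^{\eGH-e(H_2\cap F)}$, not the bare count of edge-sharing extensions. Your criterion ``count $=o(N)$'' is neither sufficient (if $N^{0.9}$ extensions each shared $\eGH-1$ edges with $F$, the product of their factors could be $\e^{-\Theta(N^{0.9}p)}$, far from $1+o(1)$) nor the one that strict balancedness naturally certifies.

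The correct estimate proceeds by classifying each edge-sharing $H_2$ according to the subgraph $G\subsetneq J\subsetneq H$ that $E(H_2\cap F)$ spans as a $(G,J)$-extension of $\xx$: there are $O(\dex^{v_H})$ choices of such a $(G,J)$-extension inside $F$, each completing to $H_2$ in at most $n^{v_H-v_J}$ ways and contributing $p^{e_H-e_J}$, so the weighted sum is $\sum_{G\subsetneq J\subsetneq H}O\bigpar{\dex^{v_H}n^{v_H-v_J}p^{e_H-e_J}}\ll n^{\gamma e_H v_H-\beta}=o(1)$ by \eqref{eq:mumupper} and \eqref{eq:lem:density:subs}. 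Combined with $1-x\ge\e^{-2x}$ for $x\le 1/2$ applied to each bad factor, this gives the claim. You correctly identified strict balancedness as the essential input, but you applied it to the wrong quantity; as written, your bound on the total discrepancy does not follow.
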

\begin{proof}%
We fix~$\cC \in \fH(\xx)$, and define the auxiliary graph~$F := \bigpar{[n], \; \bigcup_{H_1 \in \cC} E(H_1)}$. 
Note that after conditioning on the event~$\cI_\cC$, in~$\Gnp$ each possible edge from~$E(K_n) \setminus E(F)$ is still included independently with probability~$p$. 
Therefore Harris' Lemma (see, e.g.,~{\cite[Theorem~6.3.2]{AS}}) implies that 
\begin{equation} \label{eq:Harris0} 
  \Pr(\cD_{\cC^c} | \cI_{\cC}) \ge \prod_{H_2 \in \cC^c} \bigpar{1-p^{\eGH - e(H_2 \cap F)}}.
\end{equation}
Note that there are at most ${N - z}$ extensions~$H_2 \in \cC^c$ with~$e(H_2 \cap F) = 0$, 
  each contributing a factor of~${1 - p^{e_H - e_G}}$ to the right-hand side of~\eqref{eq:Harris0}. 
Every other extension~$H_2 \in \cC^c$ contains at least one edge not in~$F$
(since by \refL{lem:StrBal}~\ref{eq:StrBal:connected}, after deleting the root vertices~$\xx$, all graphs in~$\{H_1 -\xx: H_1 \in \cC\}$ are vertex-disjoint and connected), 
so that~$p^{e_H - e_G - e(H_2 \cap F)} \le p \le 1/2$ by~\eqref{eq:p_half}. 
Since~$1-x \ge \e^{-2x}$ for~$x \le 1/2$, from~\eqref{eq:Harris0} it follows~that
\begin{equation} \label{eq:Harris} 
\Pr(\cD_{\cC^c} | \cI_{\cC}) \ge  (1-p^{\eGH})^{N - \dex} \cdot \exp \Big( -2 \hspace{-0.25em} \sum_{\substack{H_2 \in \cC^c: \\ e(H_2 \cap F) \ge 1}} \hspace{-0.5em} p^{\eGH - e(H_2 \cap F)}\Big) .
\end{equation}
To estimate the sum in \eqref{eq:Harris}, note that if~$H_2 \in \cC^c$ shares an edge with~$F$, 
then~$E(H_2 \cap F)$ corresponds to a $(G,J)$-extension of~$\xx$ for some~$G \subsetneq J \subsetneq H$. 
The number of such extensions is at most~$(v_H\dex)^{v_J-v_G} = O(\dex^{v_H})$, with room to spare. 
Given a $(G,J)$-extension, it can be further extended to some~$H_2 \in \cC^c$ in at most~$n^{v_H-v_J}$ ways. 
Using~$e_H-e_G-(e_J-e_G)=e_H-e_J$ together with~\eqref{eq:mumupper} and \eqref{eq:lem:density:subs}, 
it follows that 
\begin{equation}\label{eq:Harris:overlap}
\sum_{\substack{H_2 \in \cC^c: \\ e(H_2 \cap F) \ge 1}} \hspace{-0.5em} p^{\eGH - e(H_2 \cap F)} \le
\sum_{G \subsetneq J \subsetneq H} \hspace{-0.375em} O\Bigpar{\dex^{v_H} n^{v_H-v_J} \cdot p^{e_H-e_J}} 
\ll n^{\gamma e_H v_H - \beta} = o(1), 
\end{equation}
which together with~\eqref{eq:Harris} establishes inequality~\eqref{eq:D:lower}. 
\end{proof}

Combining estimates~\eqref{eq:er}--\eqref{eq:D:lower}, we readily obtain inequality~\eqref{eq:pr:lb:bin}. 
To establish~\eqref{eq:pr:lb}, it remains to estimate the right-hand side of~\eqref{eq:pr:lb:bin} 
via the following well-known form of Stirling's formula (see, e.g.,~\cite[equation~(1.4)]{BB}):
\begin{equation}\label{eq:Stirling}
  n! = \sqrt{2\pi n}\left( \frac{n}{e} \right)^n e^{\alpha_n} \quad \text{ with } \quad \alpha_n = O(n^{-1}).
\end{equation}
With foresight, let~$t:=\dex-\mu=\eps \mu+O(1)$, and define~$\varphi(x):=(1+x)\log(1+x)-x$ for~$x > -1$.
Recalling~\eqref{eq:mumupper} we have~$1 \le z \ll n^{1/2} \ll N$. 
Using Stirling's formula~\eqref{eq:Stirling} together with~$\mu=Np^{\eGH}$ and~$z=\mu+t$, 
then a simple (but slightly tedious) calculation along the lines of the Appendix of~\cite{W14} gives
\begin{equation}\label{eq:MoivreLaplace}
\begin{split}
  \binom{N}{\dex} p^{(\eGH) \dex} (1-p^{\eGH})^{N-\dex} & \ge \frac{\exp\Bigl(-O\bigl(N^{-1}+\dex^{-1}+(N-\dex)^{-1}\bigr)\Bigr)}{\sqrt{2\pi \dex (1-\dex/N)}} \cdot \left(\frac{\mu}{\dex}\right)^{\dex} \left( \frac{N-\mu}{N-\dex} \right)^{N-\dex} \\
  & \ge \Omega(\dex^{-1/2}) \cdot \exp\Bigl(-\mu \varphi\bigl(t/\mu\bigr) - (N-\mu)\varphi\bigl(-t/(N-\mu)\bigr)\Bigr) .
\end{split}
\end{equation}
Note that~$\log(1+x) \le x$ implies~$\varphi(x) \le x^2$. 
Using~\eqref{eq:mumupper} we readily infer~$t^2/\mu 
= \eps^2\mu + O(1)$. 
Furthermore, \eqref{eq:gammadef} implies~$p^\eGH \le n^{-1/2}$, so that~$N \ge n^{1/2}\mu \gg \mu$. 
Using the estimates~\eqref{eq:mumupper} and~$\eps^2\mu \le c \log n$ together with~$\gamma e_H/2 + c < 1/2$ (see~\eqref{eq:gammadef}), 
it now follows that~\eqref{eq:MoivreLaplace} is at least
\begin{equation}\label{eq:MoivreLaplace2}
\Omega\bigpar{z^{-1/2}} \cdot \exp \Bigpar{-\bigpar{1+O\bigpar{n^{-1/2}}} \eps^2\mu} 
	\; \ge \; 
	\Omega(1) \cdot \exp \Bigpar{-\bigpar{\gamma e_H/2 + c}\log n} \gg n^{-1/2},
\end{equation}
which together with~\eqref{eq:pr:lb:bin} completes the proof of inequality~\eqref{eq:pr:lb} from Lemma~\ref{lem:main}. 

\subsubsection{The second moment: inequality~\eqref{eq:pr:ub}}
Now we turn to~\eqref{eq:pr:ub}, i.e., an upper bound for $\Pr\left( \cE_{\xx_1}, \cE_{\xx_2} \right)$ when~$\xx_1$, $\xx_2$ are disjoint. 
Recalling~\eqref{eq:er}, note~that 
\begin{equation}\label{eq:EE00}
\Pr(\cE_{\xx_1} , \: \cE_{\xx_2}) = \sum_{\cC_1 \in \fH(\xx_1)}\sum_{\cC_2 \in \fH(\xx_2, \cC_1)} \Pr(\cI_{\cC_1 \cup \cC_2} , \: \cD_{\cC_1^c \cup \cC_2^c}) ,
\end{equation}
where we (with foresight) define
\begin{equation}\label{eq:HRC}
	\fH(\xx_2, \cC_1) := \bigcpar{ \cC_2 \in \fH(\xx_2) : \ \Pr(\cI_{\cC_1 \cup \cC_2} , \: \cD_{\cC_1^c \cup \cC_2^c})>0 }.
\end{equation}
Guided by the heuristics that the various events are approximately independent, 
the plan is to show~that 
\begin{equation}\label{eq:EE01}
\Pr(\cI_{\cC_1 \cup \cC_2} , \: \cD_{\cC_1^c \cup \cC_2^c})
\; \le \; (1+o(1))  \Pr(\cI_{\cC_1} , \: \cD_{\cC_1^c}) \cdot \Pr(\cI_{\cC_2}, \: \cD_{\cC_2^c}) ,
\end{equation}
though the actual details will be slightly more involved. 
Ignoring these complications for now, note that~\eqref{eq:EE01} would together with~\eqref{eq:EE00}, \eqref{eq:er} and~$\fH(\xx_2, \cC_1) \subseteq \fH(\xx_2)$ indeed imply the desired inequality~\eqref{eq:pr:ub}.

Turning to the technical details, 
by applying Harris' Lemma (noting that~$\cI_{\cC_1 \cup \cC_2}$ is an increasing event and that~$\cD_{\cC_1^c \cup \cC_2^c}$ is a decreasing event; see~the definitions above~\mbox{\cite[Theorem~6.3.2]{AS}}) 
to the right-hand side of~\eqref{eq:EE00} we obtain~that  
\begin{equation}\label{eq:EE}
\begin{split}
\Pr(\cE_{\xx_1}, \: \cE_{\xx_2})  \le \sum_{\cC_1 \in \fH(\xx_1)}\sum_{\cC_2 \in \fH(\xx_2, \cC_1)} \Pr(\cI_{\cC_1 \cup \cC_2}) \Pr(\cD_{\cC_1^c \cup \cC_2^c}) .
\end{split}
\end{equation}
Recalling that every~$\xx \in \osets$ has~$N$ extensions in~$K_n$, 
Harris' Lemma also gives the lower bound~$\Pr(\cD_{\cC_1^c \cup \cC_2^c}) \ge (1 - p^\eGH)^{2(N-\dex)}$. 
We will now prove an asymptotically matching upper bound that 
does \emph{not} depend on the choice of $\cC_1$ and~$\cC_2$ (similarly as in Claim~\ref{cl:D:lower}). 
Here the idea is to apply a form of Janson's inequality~\cite{BS,JLR,AS}, 
and then again use \refL{lem:StrBal}~\ref{eq:StrBal:density} to argue that `overlaps' have negligible contribution. 
\begin{claim}\label{cl:D2:upper}%
Let~$\xx_1,\xx_2 \in \osets$ be disjoint. 
Then, for all~$\cC_1 \in \fH(\xx_1)$ and~$\cC_2 \in \fH(\xx_2)$, we~have 
\begin{equation}\label{eq:D2:upper}
\Pr(\cD_{\cC_1^c \cup \cC_2^c}) \; \le \; (1 + o(1)) \cdot (1-p^\eGH)^{2(N - \dex)} .
\end{equation}
\end{claim}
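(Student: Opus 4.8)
The plan is to bound $\Pr(\cD_{\cC_1^c \cup \cC_2^c})$ from above using Janson's inequality. Recall that $\cD_{\cC_1^c \cup \cC_2^c}$ is the event that none of the $2(N-\dex)$ extensions in $\cC_1^c \cup \cC_2^c$ is present in $\Gnp$. Setting up Janson's inequality with the index set ranging over these extensions, let $\lambda$ denote the sum of their individual presence probabilities (each equal to $p^{\eGH}$), so $\lambda = 2(N-\dex)p^{\eGH}$, and let $\Delta$ denote the usual sum $\sum_{i \sim j}\Pr(\cI_i \cap \cI_j)$ over ordered pairs of \emph{distinct} extensions that share at least one edge. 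Janson's inequality then yields
\begin{equation*}
\Pr(\cD_{\cC_1^c \cup \cC_2^c}) \;\le\; \exp\Bigpar{-\lambda + \tfrac{1}{2}\Delta} \;=\; (1-p^{\eGH})^{-O(\lambda p^{\eGH})} \cdot (1-p^{\eGH})^{2(N-\dex)} \cdot \e^{\Delta/2},
\end{equation*}
using $-x \le \log(1-x) + O(x^2)$ with $x = p^{\eGH}$. So it suffices to show $\lambda p^{\eGH} = o(1)$ and $\Delta = o(1)$.

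The first bound is immediate: $\lambda p^{\eGH} \asymp N p^{2(\eGH)} \asymp \mu \cdot p^{\eGH} \ll 1$ since $\mu = O(n^{\gamma e_H})$ by~\eqref{eq:mumupper} and $p^{\eGH} \le n^{-1/2}$ by the computation following~\eqref{eq:MoivreLaplace}, which already gives $\mu p^{\eGH} \ll n^{\gamma e_H - 1/2} = o(1)$ via~\eqref{eq:gammadef}. For $\Delta$, an overlapping pair of extensions contributes $p^{2(\eGH) - e(H_1 \cap H_2)}$, where $e(H_1\cap H_2) \ge 1$ counts shared edges; the union of two overlapping extensions of the \emph{same} root $\xx_i$ spans a $(G,J)$-extension of $\xx_i$ for some $G \subsetneq J \subsetneq H$ (extended on both sides), and a pair with one extension at $\xx_1$ and one at $\xx_2$ can only overlap outside both roots, again producing a dense sub-configuration. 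Either way, the counting mirrors exactly the estimate~\eqref{eq:Harris:overlap} in Claim~\ref{cl:D:lower}: there are $O(\dex^{v_H})$ choices for the shared $(G,J)$-part near each root (or $O(\dex^{2v_H})$ near both), each extended to full extensions in $O(n^{v_H - v_J})$ ways per side, so that $\Delta \le \sum_{G\subsetneq J\subsetneq H}O\bigpar{\dex^{O(1)} n^{v_H - v_J}p^{e_H - e_J}} \ll n^{-\beta + O(\gamma e_H v_H)} = o(1)$ by~\eqref{eq:lem:density:subs} and~\eqref{eq:gammadef}. Combining, $\Pr(\cD_{\cC_1^c \cup \cC_2^c}) \le (1+o(1))(1-p^{\eGH})^{2(N-\dex)}$, and crucially the $o(1)$ is uniform in $\cC_1,\cC_2$ since it depends only on $N$, $\dex$, $p$, and the fixed graph $(G,H)$.

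The main obstacle is a bookkeeping one rather than a conceptual one: correctly classifying the overlap types for cross-pairs (one extension rooted at $\xx_1$, the other at $\xx_2$) and checking that, since $\xx_1$ and $\xx_2$ are disjoint, any shared edge must lie outside both roots, so the overlapping union still contains a copy of a proper rooted subgraph dense enough for~\eqref{eq:lem:density:subs} to apply with room to spare. Once this is set up, the computation is routine and parallels Claim~\ref{cl:D:lower} almost verbatim; the only genuinely new input is invoking Janson's inequality (rather than Harris') to get the \emph{upper} bound on $\Pr(\cD_{\cC_1^c\cup\cC_2^c})$.
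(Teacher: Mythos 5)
Your overall strategy coincides with the paper's: apply Janson's inequality to the family of absent extensions and show that the correlation term $\Delta$ is $o(1)$ via the counting of \eqref{eq:Harris:overlap} and the density estimate \eqref{eq:lem:density:subs}; your bounds $\lambda p^{\eGH}\asymp \mu p^{\eGH}=o(1)$ and $\Delta=o(1)$ are both correct and match \eqref{eq:Janson:approx} and \eqref{eq:Janson:Delta:bound}.

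There is, however, one genuine gap: you assert that $\cC_1^c\cup\cC_2^c$ consists of $2(N-\dex)$ extensions and accordingly set $\lambda=2(N-\dex)p^{\eGH}$. This is false in general, because a single edge-set can simultaneously be a $(G,H)$-extension of $\xx_1$ and of $\xx_2$, so the union (as a set of edge-sets) can be strictly smaller. Applying Janson to the actual family $\cF=\cC_1^c\cup\cC_2^c$ only yields $\Pr(\cD_{\cC_1^c\cup\cC_2^c})\le(1+o(1))(1-p^{\eGH})^{|\cF|}$, which is \emph{weaker} than the claim whenever $|\cF|<2(N-\dex)$; one must separately show that the deficit $2(N-\dex)-|\cF|$ contributes only a $1+o(1)$ factor. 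This is exactly where the two hypotheses you do not use enter: since $(G,H)$ is grounded, any duplicate extension must contain a vertex of $\xx_2$, and since $\xx_1,\xx_2$ are disjoint there are only $O(n^{v_H-v_G-1})$ such duplicates, whence the correction factor is $(1-p^{\eGH})^{-O(n^{v_H-v_G-1})}=\exp\bigpar{O(\mu/n)}=1+o(1)$ by \eqref{eq:mumupper}. (Alternatively one can index the events as a multiset of size $2(N-\dex)$, but then the duplicate pairs must be included in $\Delta$, contributing the same $O(n^{v_H-v_G-1}p^{\eGH})$ term.) Without groundedness the claim itself can fail — if no root vertex is attached, almost every extension of $\xx_1$ is also one of $\xx_2$ and $\Pr(\cD_{\cC_1^c\cup\cC_2^c})$ is close to $(1-p^{\eGH})^{N}$, not $(1-p^{\eGH})^{2N}$ — so this is not a removable bookkeeping step but the place where the hypothesis does its work.
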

\begin{proof}%
Let~$\cS$ be the family of edge-sets, each of size~${e_H - e_G}$, corresponding to extensions in~${\cC_1^c \cup \cC_2^c}$ 
(each extension of~$\xx_1$ or~$\xx_2$ is uniquely determined by its edge-set, since~$H$ has no isolated vertices outside of~$V(G)$ by \refL{lem:StrBal}~\ref{eq:StrBal:connected}). 
Note that if an extension in~$\cC_1^c$ is also an extension in~$\cC_2^c$, then it must contain some vertex from~$\xx_2$ (because~$(G,H)$ is grounded).
Since~$\xx_1, \xx_2$ are disjoint, the number of such duplicate extensions is~$O(n^{v_H - v_G - 1})$, which implies that~$|\cS| \ge 2(N - \dex) - O(n^{v_H - v_G - 1})$. 
Setting~$X := \sum_{E \in \cS} \indic{E \subseteq \Gnp}$, note that the event~$\cD_{\cC_1^c \cup \cC_2^c}$ is precisely the event that~$X = 0$.
Since~$p \le 1/2$ (see~\eqref{eq:p_half}) implies~$1/(1-p^{\eGH}) \le 2$ and~$(1-p^\eGH)^{-1} \le e^{2p^\eGH}$, 
by invoking the Boppana--Spencer~\cite{BS} variant of Janson's inequality (see, e.g.,~\cite[Remark~2.20]{JLR} or~\cite[Theorem~8.1.1]{AS})  
it then follows~that
\begin{equation}
\label{eq:Janson}
\Pr(\cD_{\cC_1^c \cup \cC_2^c}) = \Pr\left( X = 0 \right)
\le (1-p^\eGH)^{|\cS|} \cdot \e^{\Delta/(1 - p^{e_H - e_G})} 
\le (1-p^\eGH)^{2(N - \dex)} \cdot \e^{O(n^{v_H - v_G - 1}p^{\eGH}+\Delta)}, 
\end{equation} 
where 
\begin{equation}\label{eq:Janson:Delta}
\Delta := \sum_{\substack{(E_1, E_2) \in \cS\times\cS:\\ 1 \le |E_1 \cap E_2| < \eGH}} \hspace{-0.75em} p^{|E_1 \cup E_2|}.
\end{equation}
Using~$\mu=Np^\eGH \asymp n^{\vGH}p^\eGH$ together with~\eqref{eq:mumupper}, it follows that 
\begin{equation}\label{eq:Janson:approx}
	n^{v_H - v_G - 1}p^\eGH \asymp \mu \cdot n^{-1} \ll n^{1/2 - 1} = o(1).
\end{equation}
Turning to the~$\Delta$-term, note that~$|\cS|p^\eGH \le 2(N - \dex)p^\eGH \le 2 \mu$. 
By proceeding analogously to the estimates in~\eqref{eq:Harris}--\eqref{eq:Harris:overlap}, using \eqref{eq:mumupper} and \eqref{eq:lem:density:subs} it routinely follows~that 
\begin{equation}\label{eq:Janson:Delta:bound}
\Delta 
\le \sum_{E_1 \in \cS} p^{\eGH}\hspace{-0.75em}\sum_{\substack{E_2 \in \cS:\\ 1 \le |E_1 \cap E_2| < \eGH}}\hspace{-1.0em} p^{\eGH-|E_1 \cap E_2|} 
\le O\Bigpar{\mu \cdot \sum_{G \subsetneq J \subsetneq H} n^{v_H - v_J}p^{e_H - e_J}} 
= o(1),
\end{equation}
which together with~\eqref{eq:Janson}--\eqref{eq:Janson:approx} establishes inequality~\eqref{eq:D2:upper}. 
\end{proof}

To sum up, by inserting the estimates~\eqref{eq:I} and~\eqref{eq:D2:upper} into~\eqref{eq:EE}, we readily arrive at
\begin{equation}\label{eq:EE1}
\Pr(\cE_{\xx_1},\cE_{\xx_2}) \; \le \; (1+o(1)) \cdot p^{(\eGH)\dex}(1-p^\eGH)^{2(N - \dex)}  \sum_{\cC_1 \in \fH(\xx_1)}\sum_{\cC_2 \in \fH(\xx_2, \cC_1)} \Pr(\cI_{\cC_2} \mid \cI_{\cC_1}) .
\end{equation}
Anticipating that the main contribution comes from pairs~$\cC_1, \cC_2$ of `disjoint' collections, we~partition 
\begin{equation}\label{eq:part}
\fH(\xx_1) := \fH_0(\xx_1,\xx_2) \cup \fH_{\ge 1}(\xx_1,\xx_2) ,
\end{equation}
where $\fH_0(\xx_1,\xx_2)$ contains the collections~$\cC_1 \in \fH(\xx_1)$ for which the auxiliary~graph 
\begin{equation}\label{eq:F}
  F = F(\cC_1) := \Bigl([n], \; \textstyle \bigcup_{H' \in \cC_1}E(H')\Bigr)
\end{equation}
contains no extensions of~$\xx_2$, and~$\fH_{\ge 1}(\xx_1,\xx_2)$ contains the remaining ones.  
Since~$\xx_1, \xx_2$ are disjoint and~$(G,H)$ is grounded, 
every $\cC_1 \in \fH_{\ge 1}(\xx_1,\xx_2)$ must contain at least one extension overlapping with~$\xx_2$ (in at least one vertex). 
From \eqref{eq:Hr}, $N \asymp n^{\vGH}$ and~$\dex \ll n$ (see~\eqref{eq:mumupper}) 
it follows~that, for some constant~$A = A(G,H) > 0$, 
\begin{equation}\label{eq:negligbleC1}
\left|\fH_{\ge 1}(\xx_1,\xx_2)\right| \le A n^{\vGH-1} \cdot \binom{N}{\dex-1} 
\asymp n^{\vGH-1}\cdot \frac{\dex}{N} \cdot |\fH(\xx_1)|
\ll |\fH(\xx_1)| .
\end{equation}
Exploiting the groundedness assumption, we next show that pairs~$\cC_1, \cC_2$ 
can only overlap in at most~$v_G=O(1)$ extensions (see~Claim~\ref{cl:finiteOverlaps}), 
and that overlapping pairs effectively have negligible contribution (see~Claim~\ref{cl:conditionalsum}). 
\begin{claim}\label{cl:finiteOverlaps}%
Let~$\xx_1,\xx_2 \in \osets$ be disjoint. 
Then, for all~$\cC_1 \in \fH(\xx_1)$, the graph~$F = F(\cC_1)$ defined in~\eqref{eq:F} contains 
at most~$v_G$ vertex-disjoint extensions of~$\xx_2$. 
\end{claim}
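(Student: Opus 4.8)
The plan is to exploit the rigid ``sunflower'' structure of the auxiliary graph $F=F(\cC_1)$, together with the two structural properties of strictly balanced and grounded rooted graphs recorded in \refL{lem:StrBal}. Write each extension $H'\in\cC_1$ as a copy of $H_G$ with roots at~$\xx_1$ and non-root vertex set $W_{H'}:=V(H')\setminus\xx_1$ (so $|W_{H'}|=\vGH$); by the definition of~$\fH(\xx_1)$ the sets $W_{H'}$, $H'\in\cC_1$, are pairwise disjoint and disjoint from~$\xx_1$. Since $G$ is an \emph{induced} subgraph of~$H$, every edge of~$H_G$ has an endpoint outside~$V(G)$, so any edge lying in two of the edge-sets $E(H')$ would have both endpoints in~$\xx_1$, which is impossible; hence these edge-sets are pairwise disjoint and partition~$E(F)$, so that every edge of $F$ lies in a unique \emph{petal}~$H'\in\cC_1$ and has at least one endpoint in~$W_{H'}$. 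Finally, by \refL{lem:StrBal}~\ref{eq:StrBal:connected} each induced subgraph $H'[W_{H'}]\cong H-V(G)$ is connected on $\vGH$ vertices. In particular, every edge of $F$ that avoids~$\xx_1$ has \emph{both} endpoints in one and the same petal.

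Now let $H''_1,\dots,H''_m$ be pairwise vertex-disjoint (outside~$\xx_2$) extensions of~$\xx_2$ contained in~$F$, and put $U_l:=V(H''_l)\setminus\xx_2$, so the sets $U_l$ are pairwise disjoint, disjoint from~$\xx_2$, and of size~$\vGH$. The heart of the argument is to show that $U_l\cap\xx_1\ne\emptyset$ for every~$l$; granting this, the $U_l$ are pairwise disjoint subsets of the $v_G$-element set~$\xx_1$, and therefore $m\le v_G$, which is the claim. So suppose, for contradiction, that $U_l\cap\xx_1=\emptyset$ for some~$l$; since $\xx_1\cap\xx_2=\emptyset$, this gives $V(H''_l)\cap\xx_1=\emptyset$. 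Every non-root vertex of the copy~$H''_l$ of~$H_G$ is incident to an edge of~$H''_l$ --- because $H-V(G)$ has no isolated vertices when $\vGH\ge2$, while if $\vGH=1$ the unique non-root vertex has degree $\eGH\ge1$ --- so each $u\in U_l$ lies in some petal. Using that $H''_l[U_l]\cong H-V(G)$ is connected and that every edge of $H''_l[U_l]$ avoids~$\xx_1$ and thus stays inside a single petal, all of~$U_l$ lies in one petal~$H'$, and comparing sizes yields $U_l=W_{H'}$. Every remaining edge of~$H''_l$ meets~$\xx_2$; as $H_G$ has no edge within~$V(G)$, its other endpoint lies in $U_l=W_{H'}$, which forces that edge too into the petal~$H'$. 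Hence $E(H''_l)\subseteq E(H')$, and since both $H''_l$ and~$H'$ are copies of~$H_G$ with exactly $\eGH$ edges, in fact $E(H''_l)=E(H')$. But $(G,H)$ is grounded, so some edge of~$H'$ is incident to a vertex of~$\xx_1$; this edge also lies in $E(H''_l)$, yet every vertex incident to an edge of~$H''_l$ belongs to $V(H''_l)=\xx_2\cup U_l$, which is disjoint from~$\xx_1$ --- a contradiction.

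The step I expect to need the most care is the reduction ``$U_l$ avoids the core $\Rightarrow$ $H''_l$ lies inside one petal'': one must handle separately the edges of~$H''_l$ within~$U_l$ and those meeting~$\xx_2$, deal with the degenerate case $\vGH=1$ (where $H''_l[U_l]$ has no edges at all), and invoke only the connectivity of $H-V(G)$ --- not of $H_G$ itself, which may be disconnected because of isolated root vertices. Once this is in place, the edge-partition of~$F$ into petals and the concluding pigeonhole over the $v_G$ core vertices are both routine.
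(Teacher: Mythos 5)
Your proof is correct and follows essentially the same route as the paper's: both arguments reduce to showing that any extension of~$\xx_2$ contained in~$F$ must meet~$\xx_1$ (using the connectivity of~${H-V(G)}$ from \refL{lem:StrBal}~\ref{eq:StrBal:connected} together with groundedness), and then conclude by pigeonhole on the~$v_G$ vertices of~$\xx_1$; the paper packages the key step as ``$F-\xx_1$ is $H'$-free since its components are copies of~${H-V(G)}$, which has fewer edges than the connected graph~$H'$,'' whereas you reach the same contradiction by identifying the hypothetical extension with a single petal. One cosmetic slip: the sets~$U_l$ are not themselves subsets of~$\xx_1$ — it is the pairwise disjoint nonempty intersections $U_l\cap\xx_1$ that give $m\le v_G$.
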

\begin{proof}%
The graph~${F - \xx_1}$, obtained by removing the vertices~$\xx_1$ from~$F$, consists of isolated vertices and vertex-disjoint copies of the graph~${H - V(G)}$, which, by \refL{lem:StrBal}~\ref{eq:StrBal:connected}, is connected. 
Let~$H'$ be obtained from~${H - E(G)}$ by removing isolated root vertices (if any). 
Since~$(G,H)$ is grounded, we have ${e_{{H - V(G)}} < e_{H'}}$. 
Note that~$H'$ is connected (since it equals~${H - V(G)}$ with some root vertices connected to it) and therefore~${F - \xx_1}$ is~$H'$-free. 
It follows that any extension of~$\xx_2$ that is present in~$F$ must intersect~$\xx_1$, so there are at most~$|\xx_1| = v_G$ such vertex-disjoint extensions of~$\xx_2$. 
\end{proof}
\begin{claim}\label{cl:conditionalsum}%
Let~$\xx_1,\xx_2 \in \osets$ be disjoint. Then 
\begin{equation}\label{eq:sumP}
\sum_{\cC_1 \in \fH(\xx_1)}\sum_{\cC_2 \in \fH(\xx_2, \cC_1)}\Pr(\cI_{\cC_2} \mid \cI_{\cC_1}) 
\; \le \; 
(1+o(1)) \sum_{\cC_1 \in \fH(\xx_1)}\sum_{\cC_2 \in \fH(\xx_2)}\Pr(\cI_{\cC_2}) .
\end{equation}
\end{claim}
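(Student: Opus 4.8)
The plan is to prove \eqref{eq:sumP} by splitting the outer sum over $\cC_1$ according to the partition $\fH(\xx_1) = \fH_0(\xx_1,\xx_2) \cup \fH_{\ge 1}(\xx_1,\xx_2)$ from~\eqref{eq:part}, and controlling the two pieces by rather different means. For the `bad' part $\fH_{\ge 1}(\xx_1,\xx_2)$, I would first argue that $\Pr(\cI_{\cC_2}\mid\cI_{\cC_1}) \le p^{(\eGH)\dex - e(F\cap\cC_2)}$ and that, by Claim~\ref{cl:finiteOverlaps}, the graph $F=F(\cC_1)$ contains at most $v_G$ vertex-disjoint extensions of $\xx_2$, hence $F$ can absorb edges from at most $O(1)$ of the $\dex$ extensions in $\cC_2$; combined with the crude bound $|\fH_{\ge 1}(\xx_1,\xx_2)| \ll |\fH(\xx_1)|$ from~\eqref{eq:negligbleC1} and $|\fH(\xx_2)| \sim \binom{N}{\dex}$, this should show the contribution of $\fH_{\ge 1}$ to the left-hand side is $o\bigl(\sum_{\cC_1\in\fH(\xx_1)}\sum_{\cC_2\in\fH(\xx_2)}\Pr(\cI_{\cC_2})\bigr)$, using that the right-hand side is $\asymp |\fH(\xx_1)|\cdot|\fH(\xx_2)|\cdot p^{(\eGH)\dex}$ and that the per-$\cC_1$ over-count from edge savings is at most a constant power of $p^{-\eGH}=n^{O(1)}$ while we gain a genuine factor $n^{-1}$ from $|\fH_{\ge 1}|/|\fH(\xx_1)|$, together with room from~\eqref{eq:mumupper}.

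For the `good' part $\cC_1 \in \fH_0(\xx_1,\xx_2)$, the point is that $F=F(\cC_1)$ contains \emph{no} extension of $\xx_2$, so no single extension $H_2\in\cC_2$ lies entirely in $F$; thus each of the $\dex$ extensions in $\cC_2$ contributes at least one fresh edge, and more precisely, reasoning as in Claim~\ref{cl:D:lower} and Claim~\ref{cl:D2:upper}, the total number of $F$-edges usable by members of $\cC_2$ corresponds to $(G,J)$-extensions of $\xx_2$ with $G\subsetneq J\subsetneq H$. So I would write
\begin{equation*}
\Pr(\cI_{\cC_2}\mid\cI_{\cC_1}) = p^{(\eGH)\dex - |E(\cC_2)\cap E(F)|} \le p^{(\eGH)\dex}\cdot p^{-\sum_{H_2\in\cC_2} e(H_2\cap F)},
\end{equation*}
and then sum the exponent over the possible overlap patterns: for each $H_2\in\cC_2$ that meets $F$, $E(H_2\cap F)$ is a $(G,J)$-extension of $\xx_2$ for some $G\subsetneq J\subsetneq H$, the number of such being $O(\dex^{v_H} n^{v_H-v_J})$ (choosing the non-root, non-$F$ vertices), each extendable to $p^{\eGH-(e_J-e_G)}$ mass. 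By the same computation as in~\eqref{eq:Harris:overlap}, invoking \refL{lem:StrBal}~\ref{eq:StrBal:density} and~\eqref{eq:mumupper}, the resulting correction factor is $\exp\bigl(O(\mu)\sum_{G\subsetneq J\subsetneq H} n^{v_H-v_J}p^{e_H-e_J}\bigr) = \exp(o(1)) = 1+o(1)$, uniformly in $\cC_1\in\fH_0$ and $\cC_2$; summing this over $\cC_2\in\fH(\xx_2)\supseteq\fH(\xx_2,\cC_1)$ and then over $\cC_1\in\fH_0(\xx_1,\xx_2)\subseteq\fH(\xx_1)$ gives the claimed bound with constant $(1+o(1))$.

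Combining the two parts yields~\eqref{eq:sumP}. The main obstacle I anticipate is bookkeeping in the $\fH_{\ge 1}$ piece: I need to make sure that the (bounded, but nontrivial) number of $F$-edges a member of $\cC_2$ can reuse — which inflates $\Pr(\cI_{\cC_2}\mid\cI_{\cC_1})$ by a factor $p^{-O(\eGH)} = n^{O(1)}$ — is genuinely outweighed by the factor $|\fH_{\ge 1}(\xx_1,\xx_2)|/|\fH(\xx_1)| \asymp n^{\vGH-1}\dex/N \asymp \dex/n \ll n^{\gamma e_H - 1}$ coming from~\eqref{eq:negligbleC1}. Since $\gamma$ was chosen in~\eqref{eq:gammadef} so that $\gamma e_H$ is much smaller than $1$ (indeed $\gamma e_H < 1/2$), and the number of reusable edges is bounded by a graph-dependent constant (via Claim~\ref{cl:finiteOverlaps}, at most $v_G$ extensions of $\xx_2$ can even partially sit in $F-\xx_1$, each contributing $< \eGH$ edges), this margin is comfortable; still, it is where the choice of constants from~\eqref{eq:gammadef} must be invoked carefully, and where the groundedness hypothesis (used both in Claim~\ref{cl:finiteOverlaps} and in bounding $|\fH_{\ge 1}|$ via the "must contain an extension touching $\xx_2$" argument) is essential.
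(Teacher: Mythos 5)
Your treatment of the `good' part $\cC_1 \in \fH_0(\xx_1,\xx_2)$ is essentially the paper's argument (a weighted count over the overlap types $G \subseteq J \subsetneq H$, controlled by \eqref{eq:Harris:overlap} and \refL{lem:StrBal}~\ref{eq:StrBal:density}), though the bookkeeping of the correction factor is loose: the relevant quantity is a \emph{relative} per-extension correction of order $\dex^{v_H}n^{v_H-v_J}p^{e_H-e_J}/\mu$, compounded over $\dex \asymp \mu$ extensions, which gives exactly the $o(1)$ of \eqref{eq:Harris:overlap}; your written expression $\exp\bigl(O(\mu)\sum_J n^{v_H-v_J}p^{e_H-e_J}\bigr)$ only comes out right because two slips (dropping the $\dex^{v_H}$ overlap-position count and writing $O(\mu)$ instead of $O(\dex/\mu)=O(1)$) happen to point in opposite directions.

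The genuine gap is in the `bad' part $\fH_{\ge 1}(\xx_1,\xx_2)$, and it is exactly at the spot you flag as "the main obstacle": the margin is \emph{not} comfortable. If $F=F(\cC_1)$ contains a full extension of $\xx_2$ that belongs to $\cC_2$, the conditional probability is inflated by $p^{-(\eGH)}$, and since $p \le n^{-1/d(G,H)+\gamma}$ this is $\ge n^{\vGH - \gamma(\eGH)}$, which already exceeds $n$ whenever $H$ has at least two non-root vertices (for the rooted triangle it is $\approx n^{2}$). The factor $|\fH_{\ge 1}|/|\fH(\xx_1)| \asymp \dex/n$ therefore cannot absorb it, and no choice of $\gamma$ in \eqref{eq:gammadef} rescues the comparison "$n^{O(1)}$ inflation versus $n^{-1}$ gain". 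The missing idea is that the inflation must be paid for by a \emph{reduction in the number of admissible $\cC_2$}: by the definition \eqref{eq:HRC} of $\fH(\xx_2,\cC_1)$, every extension of $\xx_2$ contained in $F$ is forced to lie in $\cC_2$ (else $\cD_{\cC_2^c}$ is incompatible with $\cI_{\cC_1}$), so if $F$ contains $k\le v_G$ such extensions, the admissible $\cC_2$ number roughly $(\dex/N)^k|\fH(\xx_2)|$, and $(\dex/N)^k p^{-k(\eGH)} = (\dex/\mu)^k \le 4^{v_G} = O(1)$. This cancellation is what the paper's estimate \eqref{eq:intermediate} encodes via the $4^k$ factor, and only \emph{after} it does the $o(1)$ from \eqref{eq:negligbleC1} finish the job. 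Separately, your appeal to Claim~\ref{cl:finiteOverlaps} to assert that "$F$ can absorb edges from at most $O(1)$ of the $\dex$ extensions in $\cC_2$" misreads that claim: it bounds only the number of vertex-disjoint \emph{full} extensions of $\xx_2$ inside $F$; arbitrarily many members of $\cC_2$ may share edges with $F$ partially, and those partial overlaps must be handled by the same weighted $(G,J)$-count as in the good part, not by a constant bound.
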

\begin{proof}[Proof of Claim~\ref{cl:conditionalsum}]%
In the first step we estimate~$\sum_{\cC_2 \in \fH(\xx_2, \cC_1)}\Pr(\cI_{\cC_2} \mid \cI_{\cC_1})$ using a counting argument 
that accounts for the different kinds of overlaps of~$\cC_2$ with the graph~$F = F(\cC_1)$ defined in~\eqref{eq:F}. 
Turning to the details, as in the proof of Claim \ref{cl:D2:upper} we will think of~$(G,H)$-extensions as~edge-sets of size~${e_H-e_G}$. 
Recall that $|\cC_1| = |\cC_2| = z = \ceil{(1 + \eps)\mu}$. 
Suppose that the graph~$F$ contains~$k$ extensions of~$\xx_2$.
If $\cC_2 \in \fH(\xx_2,\cC_1)$ then all these~$k$ extensions must be present in $\cC_2$, since otherwise $\Pr(\cI_{\cC_1 \cup \cC_2} , \cD_{\cC_1^c \cup \cC_2^c}) \le \Pr(\cI_{\cC_1}, \cD_{\cC_2^c})= 0$ contradicting $\cC_2 \in \fH(\xx_2, \cC_1)$. 
List the remaining extensions~in~$\cC_2$ as $E_1, \dots, E_{\dex - k}$ in an arbitrary order.
Note that each~$E_i$ is not fully contained in~$E(F)$, and thus the intersection~$E_i \cap E(F)$ is the edge-set of some $(G,J_i)$-extension of~$\xx_2$ for some graph~$J_i$ satisfying~$G \subseteq J_i \subsetneq H$ 
(the case~$J_i = G$ occurs when the extension~$E_i$ is edge-disjoint from~$F$). 
When these intersections are given by~${J_1, \dots, J_{\dex-k}}$, then we clearly have 
\begin{equation*}
\prob{ \cI_{\cC_2} \mid \cI_{\cC_1} } = \prod_{i = 1}^{\dex - k} p^{e_H - e_G - (e_{J_i}-e_G)} = \prod_{i = 1}^{\dex - k} p^{e_H - e_{J_i}}.
\end{equation*}
Furthermore, the number of sequences $E_1, \dots, E_{z - k}$ corresponding to intersections~${J_1, \dots, J_{\dex-k}}$ is bounded from above by
\begin{equation*}
\prod_{i = 1}^{\dex - k} \bigpar{v_G + (\vGH)\dex}^{v_{J_i} - v_G}\extcount{J_i,H},
\end{equation*}
where~$\extcount{J,H} := N_{G,H}=N$ if~$G = J$ and~$\extcount{J,H} := n^{v_H - v_J}$ otherwise. 
Hence, summing over all possible choices of~$J_1, \dots, J_{\dex-k}$ and dividing by~$(\dex - k)!$ (since we sum over unordered collections~$\cC_2$), it follows that 
\begin{align}
	\sum_{\cC_2 \in \fH(\xx_2, \cC_1)}\Pr(\cI_{\cC_2} \mid \cI_{\cC_1}) & \le   \notag \frac{1}{(\dex-k)!} \sum_{\substack{J_1, \dots, J_{\dex-k}:\\ G \subseteq J_i \subsetneq H}} \prod_{i = 1}^{\dex - k} \bigpar{v_G + (\vGH)\dex}^{v_{J_i} - v_G} \extcount{J_i,H} p^{e_H - e_{J_i}} 
	\\
	&\le \frac{\dex^k}{\dex!} \cdot \biggpar{\sum_{G \subseteq J \subsetneq H}\bigpar{v_G + (\vGH)\dex}^{v_J - v_G} \extcount{J,H} p^{e_H - e_J}}^{\dex - k}
  \label{eq:rhs}.
\end{align}
Noting that $\extcount{G,H}p^{e_H - e_G} = \mu$, using \eqref{eq:Harris:overlap} and~$\mu \asymp \dex$ we bound the sum in~\eqref{eq:rhs} from above by, say, 
\begin{equation}\label{eq:muerror}
	\mu + O\bigpar{\sum_{G \subsetneq J \subsetneq H}  \dex^{v_H}n^{v_H - v_J}p^{e_H - e_J}} 
	\le \mu + o(1)  = \mu \cdot \Bigpar{1 + o\bigpar{\dex^{-1}}}.
\end{equation}
From the assumptions~$\eps \le 1$ and~$\mu \ge 1/2$ 
it follows that~$\dex \le (1+\eps)\mu + 1 \le 4 \mu$, say. 
Therefore, in view of \eqref{eq:rhs}--\eqref{eq:muerror}, using~$\mu=Np^\eGH$ and~\eqref{eq:Hr} it follows~that 
\begin{equation}\label{eq:intermediate}
	\sum_{\cC_2 \in \fH(\xx_2, \cC_1)}\Pr(\cI_{\cC_2} \mid \cI_{\cC_1}) \le \left( \frac{\dex}{\mu}\right)^k\frac{(Np^\eGH)^{\dex}}{\dex!} \Bigpar{1 + o\bigpar{\dex^{-1}}}^{\dex-k} 
\: \le \: 
(1+o(1)) \cdot 4^k|\fH(\xx_2)|p^{(\eGH)\dex} ,
\end{equation}
whenever the graph~$F$ defined in~\eqref{eq:F} contains exactly~$k$ extensions of~$\xx_2$.

In the second step we sum the above estimate~\eqref{eq:intermediate} over all~$\cC_1 \in \fH(\xx_1)$. 
Recalling the partition~\eqref{eq:part}, note that~$k =0$ when~$\cC_1 \in \fH_0(\xx_1,\xx_2)$, and that~$k \le v_G$ otherwise (see Claim~\ref{cl:finiteOverlaps}). 
From~\eqref{eq:intermediate} it follows that 
\[
\sum_{\cC_1 \in \fH(\xx_1)}\sum_{\cC_2 \in \fH(\xx_2, \cC_1)}\Pr(\cI_{\cC_2} \mid \cI_{\cC_1}) \le (1+o(1)) \cdot \Bigpar{|\fH_0(\xx_1,\xx_2)|  + 4^{v_G}|\fH_{\ge 1}(\xx_1,\xx_2)|} \cdot |\fH(\xx_2)|p^{(\eGH)\dex} .
\]
In view of~\eqref{eq:negligbleC1}, the factor in the above parentheses is at most~$(1+o(1)) \cdot |\fH(\xx_1)|$, say,  
which together with~$p^{(\eGH)\dex}=\Pr(\cI_{\cC_2})$ from~\eqref{eq:I} then completes the proof of inequality~\eqref{eq:sumP}.  
\end{proof}
Finally, inserting the estimates~\eqref{eq:sumP}, $p^{(\eGH)\dex}=\Pr(\cI_{\cC_1})$, and~\eqref{eq:D:lower} into~\eqref{eq:EE1}, 
it follows that 
\begin{equation*}
\Pr(\cE_{\xx_1},\cE_{\xx_2}) \; \le \; (1+o(1)) \sum_{\cC_1 \in \fH(\xx_1)}\Pr(\cI_{\cC_1})\Pr(\cD_{\cC_1^c} | \cI_{\cC_1})\sum_{\cC_2 \in \fH(\xx_2)} \Pr(\cI_{\cC_2})\Pr(\cD_{\cC_2^c} | \cI_{\cC_2}) ,
\end{equation*}
which together with~\eqref{eq:er} completes the proof of inequality~\eqref{eq:pr:ub} and thus Lemma~\ref{lem:main} 
(which in turn implies the \zerost{} in~\eqref{eq:main:strbal} of \refT{thm_strictly_balanced}, as discussed). \noproof

\subsection{The $1$-statement}\label{sec:1statement} 
Our proof of the \onest{} in~\eqref{eq:main:strbal} of Theorem~\ref{thm_strictly_balanced} is based on a fairly standard union bound argument. 
\begin{proof}[Proof of the $1$-statement in~\eqref{eq:main:strbal} of Theorem~\ref{thm_strictly_balanced}]%
Fix an arbitrary constant~$\tau > 0$. 
For~$\beta > 0$ as given by \refL{lem:StrBal}~\ref{eq:StrBal:density}, fix constants~$0 < \gamma \le \beta$ and $0 < \alpha < \gamma/2$ as in the proof of the $0$-statement (see \refS{sec:0statement}). 
If~$p > n^{-1/d(G,H) + \gamma}$, then \refR{rem:Phibig}~\ref{eq:Phibig:iv} implies~$\Phi_{G,H} = \Omega(n^\gamma)$, and using~$\eps^2 \Phi_{G,H} = \Omega(n^{\gamma - 2\alpha}) = n^{\Omega(1)}$ 
we see that the $1$-statement of Theorem~\ref{thm_strictly_balanced} follows from Theorem~\ref{thm_generaltail}~\ref{thm_tail1} with~$t = \eps \mu$.

In the remaining (main) case~$p \le n^{-1/d(G,H) + \gamma}$, we fix a root~$\xx \in \osets$. 
Since there are~$O(n^{v_G})$ many such roots, 
for the $1$-statement of Theorem~\ref{thm_strictly_balanced} it suffices to show that, for~$C>0$ large enough,
\begin{equation}\label{eq:thm_ext_01:goal}
	\prob{|X_\xx - \mu| \ge \eps \mu} = o\bigl( n^{- (v_G + \tau)} \bigr) \qquad \text{ if~$\eps^2 \mu \ge C \log n$.} 
\end{equation}
To avoid clutter, we shall henceforth use the convention that all implicit constants~$c_i$ may depend on~$(G,H)$. 
For the lower tail we shall apply Janson's inequality~\cite[Theorem~1]{RiordanWarnke2015} 
analogously to the textbook argument~\cite{JLR,JW} for unrooted subgraph counts, 
which in view of~\eqref{eq:lem:density:subs} from \refL{lem:StrBal}~\ref{eq:StrBal:density} 
routinely gives 
\begin{equation}\label{eq:1:LT:Janson}
	\prob{X_\xx \le (1- \eps) \mu} 
\; \le \;  \exp\Bigl(-c_1 \eps^2 \mu\Bigr) 
\le n^{-c_1 C} = o\bigl( n^{- (v_G + \tau)} \bigr)
\end{equation} 
for~$C > (v_G+\tau)/c_1$ 
(similar to~\eqref{eq:Janson:Delta} and~\eqref{eq:Janson:Delta:bound}, the relevant~$\Delta$-term of Janson's inequality, 
which here is defined in terms of the family~$\cS$ of edge-sets corresponding to extensions of~$\xx$ in~$K_n$, satisfies~$\Delta = o(1)$ by~\eqref{eq:mumupper} and~\eqref{eq:lem:density:subs}). 
For the upper tail we shall apply~\cite[Theorem~32]{WUT} in the setting described in~\cite[Example~20]{WUT} 
(the conditions (H$\ell$), (P), (P$q$) are defined in~\cite[Section~4.1]{WUT}). 
The underlying hypergraph $\cH = \fH(\xx)$ consists of the edge-sets of extensions of $\xx$, thus having vertex-set~$V(\cH) = E(K_n)$. 
We set the parameters to~$N = n^2$, $\ell = 1$, $q = k = \eGH$, and~$K=v_G+2\tau$. 
The quantity~$\mu_j$ from~\cite[Example~20]{WUT} satisfies~$\max_{1 \le j < q}\mu_j \le \max_{G \subsetneq J \subsetneq H} n^{v_H - v_J}p^{e_H - e_J} \ll n^{-\beta}$ by \refL{lem:StrBal}~\ref{eq:StrBal:density}. 
Invoking~\cite[Theorem~32]{WUT}, it then follows~that
\begin{equation}\label{eq:1:UT:Wsmallp}
\prob{X_\xx \ge (1+\eps) \mu} \; \le \;  \bigl(1 + o(1)\bigr) \cdot \exp\Bigl(-\min\bigl\{c_2\eps^2 \mu, \: (v_G + 2\tau)\log n\bigr\}\Bigr) = o\bigl( n^{- (r + \tau)} \bigr)
\end{equation}
for~$C > (v_G+\tau)/c_2$, completing the proof of~\eqref{eq:thm_ext_01:goal} 
 and thus the $1$-statement in~\eqref{eq:main:strbal} of Theorem~\ref{thm_strictly_balanced}.  
\end{proof}
\begin{remark}[Theorem~\ref{thm_strictly_balanced}: stronger $1$-statement]\label{rem:thm_strictly_balanced}%
The above proof yields, in view of~\refR{rem:thm_generaltail}, the following stronger conclusion: 
for any fixed~$\tau > 0$ there is a constant~$C=C(\tau,G,H)>0$ such that 
the~$1$-statement in~\eqref{eq:main:strbal} of Theorem~\ref{thm_strictly_balanced} 
holds with probability~$1 - o(n^{-\tau})$. 
\end{remark}

\subsection{Deferred proof of \refL{lem:StrBal}}\label{s_strictly_balanced:prelim:deferred}
For completeness, we now give the routine proof of \refL{lem:StrBal} 
deferred from \refS{s_strictly_balanced:prelim}. 
\begin{proof}[Proof of \refL{lem:StrBal}]%
\ref{eq:StrBal:density}: Set~$\Psi_{J,H} := n^{v_H - v_J}p^{e_H - e_J}$. 
In the case $v_J = v_H$, for any~$\beta > 0$ satisfying~$1/d(G,H) > 2\beta$ 
we have~$\Psi_{J,H} = p^{e_H-e_J} \ll n^{-(e_H - e_J)\beta} \le n^{-\beta}$. 
Thus we can henceforth assume~$v_J < v_H$.
Since~$G$ is an induced subgraph of~$H$ and thus of~$J$, we also have~$v_G < v_J$. 
Since~$(G,H)$ is strictly balanced we have~$d(G,J)< d(G,H)$, which implies 
\begin{equation}\label{eq:lem:density:subs:3}
d(J,H) = \frac{(e_H-e_G)-(e_J-e_G)}{(v_H-v_G)-(v_J-v_G)} = \frac{(v_H-v_G)d(G,H)-(v_J-v_G)d(G,J)}{(v_H-v_G)-(v_J-v_G)} > d(G,H) .
\end{equation}
Hence~$1/d(G,H) > 1/d(J,H) + 2\beta$ for~$\beta>0$ sufficiently small,  
so that~$p = O(n^{-1/d(G,H)+\beta}) \ll n^{-1/d(J,H) - \beta}$. 
Observe that~$e_H > e_J$, since otherwise~$e_H = e_J$ and~$v_H > v_J$ imply~$d(G,J) > d(G,H)$, 
contradicting that~$(G,H)$ is strictly~balanced.
Hence $\Psi_{J,H} = (n^{1/d(J,H)} p)^{e_H-e_J} \ll n^{-\beta}$, 
completing the proof of~\eqref{eq:lem:density:subs}. 

\ref{eq:StrBal:connected}: Assume the contrary. 
Then we can split $V(H) \setminus V(G)$ into two nonempty sets~$V_1$ and~$V_2$ 
such that there are no edges between $V_1$ and $V_2$. 
Writing~$H_i := H[V(G) \cup V_i]$, we readily obtain
\begin{equation*}
		d(G,H) = \frac{e_H-e_G}{v_H-v_G} = \frac{\sum_{i \in [2]}(e_{H_i}-e_G)}{\sum_{i \in [2]}(v_{H_i}-v_G)} = \frac{\sum_{i \in [2]}(v_{H_i}-v_G)d(G,H_i)}{\sum_{i \in [2]}(v_{H_i}-v_G)} \le \max_{i \in [2]}d(G,H_i) .
	\end{equation*}
Since~$(G, H)$ is strictly balanced we have~$d(G,H_i)< d(G,H)$, yielding the desired contradiction. 
\end{proof}

\section{No grounded primals case (\refT{thm_nogrounded})}\label{s_nogrounded}
In this section we prove Theorem~\ref{thm_nogrounded} by focusing on a maximal primal subgraph~$J_{\max}$ of~$(G, H)$; 
we remark that~$J_{\max}$ is in fact unique (the union of all primal subgraphs), but we do not need this. 
Our arguments hinge on the basic observation that, since~$J_{\max}$ is by assumption not grounded (i.e.,~there are no edges between~$V(G)$ and~$V(J_{\max}) \setminus V(G)$), 
extension counts~$X_{G,J_{\max}}(\xx)$ are essentially the same as the number of \emph{unrooted} copies of the graph~$K := {J_{\max} - V(G)}$, where the vertices of~$G$ are~deleted from~$J_{\max}$.

For the $1$-statement this heuristically means that if~$X_{G,J_{\max}}(\xx)$ is concentrated for \emph{some}~$\xx$, then $X_{G,J_{\max}}(\xx)$ is concentrated for \emph{all}~$\xx$ 
(the reason being that not too many copies of~$K$ can overlap with any root~$\xx'$, see Lemma~\ref{lem:scattered} below). 
Furthermore, using Theorem~\ref{thm_generaltail}~\ref{thm_tail1} it turns out that \whp{} each copy of~$J_{\max}$ extends to the `right' number of $H$-copies 
(here the crux will be that~$\Phi_{J_{\max}, H} = n^{\Omega(1)}$ follows from \refR{rem:Phibig}~\ref{eq:Phibig:iv} and Lemma~\ref{lem:nogroundeddensity} below). 
Combining these two estimates then allows us to deduce that~\whp{}~$X_{G,H}(\xx)$ is concentrated for all~$\xx$;  see Section~\ref{sec:ext:1} for~the~details.

For the $0$-statement we shall proceed similarly, the main difference is that, for a \emph{fixed}~$\xx$, we start by arguing that~$X_{G,J_{\max}}(\xx)$ is not concentrated, i.e., \whp{} far away from its expected value. 
This allows us to deduce that~$\xx$ has~\whp{} the wrong number of $(G,H)$-extensions (since by Theorem~\ref{thm_generaltail}~\ref{thm_tail1} \whp{} each copy of~$J_{\max}$ again extends to the right number of copies of~$H$); see Section~\ref{sec:ext:0} for~the~details.

\subsection{Setup and technical preliminaries}
In the upcoming arguments it will, as in~\cite{S90b}, often be convenient to treat extensions as sequences of vertices. Given a rooted graph~$(G,H)$ with labeled vertices~$V(G) = {\left\{ 1, \dots, v_G \right\}}$ and~${V(H) \setminus V(G)} = {\{v_G + 1, \dots, v_H\}}$, 
\emph{an ordered $(G,H)$-extension of} $\xx = {(x_1, \dots, x_{v_G})} \in \osets$ is a sequence $\yy = {(y_{v_G + 1}, \dots, y_{v_H})}$ of distinct vertices 
from~$[n] \setminus \{x_1, \dots, x_{v_G}\}$ 
such that the injection which maps each vertex~${j \in V(G)}$ onto $x_j$ and each vertex~${i \in {V(H)\setminus V(G)}}$ onto~$y_i$, also maps every edge~${f \in {E(H) \setminus E(G)}}$ onto an edge. 
%
Given a root~$\xx \in \osets$, let~$Y_{G,H}(\xx)$ denote the number of ordered~$(G,H)$-extensions of~$\xx$ in~$\Gnp$. 
Note that
\begin{equation}\label{def:nuGH}
\nu_{G,H} := \E Y_{G,H}(\xx) = (n-v_G) (n-v_G-1) \cdots (n-v_H+1) \cdot p^{\eGH}  
\end{equation}
does not depend on the particular choice of~$\xx$.   
Let~$\aut(G,H)$ denote the number of automorphisms of~$H$ that fix the set~$V(G)$. 
Since each extension corresponds to~$\aut(G,H)$ many ordered extensions, we~obtain
\begin{align}
\label{eq:YX}
Y_{G,H}(\xx) &= \aut(G,H) \cdot X_{G,H}(\xx) , \\
\label{eq:numu}
	\nu_{G,H} &= \aut(G,H) \cdot \mu_{G,H},
\end{align}
where $\mu_{G,H} = \E X_{G,H}(\xx)$ is defined as in~\eqref{def:muGH}.
One further useful elementary observation is that, for any induced~$G \subseteq J \subseteq H$, we have 
\begin{equation}\label{eq:muprod}
	\nu_{G,J} \cdot \nu_{J,H} = \nu_{G,H}.
\end{equation}
Our arguments will also exploit the following technical property of maximal primal~subgraphs. 
\begin{lemma}\label{lem:nogroundeddensity}%
If~$J_{\max}  \subsetneq H$ is a maximal primal of the rooted graph~$(G,H)$, \linebreak[3] 
then~$m(J_{\max},H) < m(G,H)$.
\end{lemma}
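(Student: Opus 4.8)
The plan is to argue by contradiction, exploiting the elementary ``mediant'' behaviour of the densities $d(\cdot,\cdot)$ together with the maximality of $J_{\max}$ among primal subgraphs. Recall that, by definition of a primal subgraph, $d(G,J_{\max}) = m(G,H)$, and that a primal subgraph has strictly more vertices than $G$ (otherwise $d(G,J_{\max})$ would be undefined or infinite, whereas $m(G,H) \ge d(G,H)$ is finite and positive since $e_H>e_G$). The quantity to be bounded is $m(J_{\max},H) = \max_{J_{\max} \subsetneq J \subseteq H} d(J_{\max},J)$, the maximum running over a nonempty family because $J_{\max} \subsetneq H$. So it suffices to show $d(J_{\max},J) < m(G,H)$ for every $J$ with $J_{\max} \subsetneq J \subseteq H$, and I would assume for contradiction that some such $J$ satisfies $d(J_{\max},J) \ge m(G,H)$.

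The key step is then to split the numerator and denominator of $d(G,J)$ along $J_{\max}$: writing $a_1 := e_{J_{\max}} - e_G$, $b_1 := v_{J_{\max}} - v_G$, $a_2 := e_J - e_{J_{\max}}$ and $b_2 := v_J - v_{J_{\max}}$, one has $b_1 > 0$, and
\begin{equation*}
d(G,J) \;=\; \frac{a_1 + a_2}{b_1 + b_2},
\end{equation*}
a weighted mediant of $d(G,J_{\max}) = a_1/b_1 = m(G,H)$ and $d(J_{\max},J) = a_2/b_2 \ge m(G,H)$. (If $b_2 = 0$, i.e.\ $V(J) = V(J_{\max})$ but $E(J) \supsetneq E(J_{\max})$, then already $d(G,J) = (a_1+a_2)/b_1 > a_1/b_1 = m(G,H)$, contradicting the definition of $m(G,H)$; so I may assume $b_2 > 0$.) The elementary inequality ``$a_2/b_2 \ge a_1/b_1$ implies $(a_1+a_2)/(b_1+b_2) \ge a_1/b_1$'' (a one-line computation, $b_1 b_2$ times the difference) then gives $d(G,J) \ge m(G,H)$. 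On the other hand $d(G,J) \le m(G,H)$ by the very definition of $m(G,H)$, since $G \subsetneq J \subseteq H$. Hence $d(G,J) = m(G,H)$, so $J$ is itself a primal subgraph; but $J \supsetneq J_{\max}$ contradicts the maximality of $J_{\max}$ among primal subgraphs.

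This contradiction shows that $d(J_{\max},J) < m(G,H)$ for all $J$ with $J_{\max} \subsetneq J \subseteq H$, which is exactly $m(J_{\max},H) < m(G,H)$. I do not expect any real obstacle: the only points requiring a moment's care are ensuring the denominators $b_1,b_2$ are positive (so the mediant manipulation is legitimate) and disposing of the degenerate ``edges only'' case $b_2 = 0$; both are handled above. Conceptually this is just the rooted analogue of the classical fact that a maximal maximum-density subgraph is strictly less dense than the ambient graph measured relative to it.
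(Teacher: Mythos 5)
Your proof is correct and rests on the same core idea as the paper's: splitting $d(G,J)$ along $J_{\max}$ as a weighted mediant of $d(G,J_{\max})=m(G,H)$ and $d(J_{\max},J)$, and invoking the maximality of $J_{\max}$ among primals. The paper runs this directly (from $d(G,J)<m(G,H)$ it computes $d(J_{\max},J)<m(G,H)$ via the same identity), whereas you argue the contrapositive by contradiction; the extra care you take with the degenerate denominators is fine but not needed once one notes that primal subgraphs are necessarily induced.
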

\begin{proof}
Fix~$J_{\max}  \subsetneq J \subseteq H$. 
Using maximality of~$J_{\max} \supsetneq G$, 
we infer~$d(G, J) < m(G,H)$ and~$d(G, J_{\max} ) = m(G,H)$. 
Proceeding analogously to inequality~\eqref{eq:lem:density:subs:3}, it routinely follows that 
\[
d(J_{\max},J)  
= \frac{(\vr{G}{J})d(G,J)-(\vr{G}{J_{\max}})d(G,J_{\max})}{(\vr{G}{J})-(\vr{G}{J_{\max}})}
< m(G,H),
\]
which completes the proof by maximizing over all feasible~$J$.
\end{proof}

\subsection{The $0$-statement}\label{sec:ext:0}
As discussed, for the $0$-statement of Theorem~\ref{thm_nogrounded} the core idea is 
to show that~$X_{G,J_{\max}}(\xx)$ is not concentrated for some~$\xx \in \osets$, 
and that~$X_{J_{\max},H}(\y)$ is concentrated for all~$\y \in \oset{v_{J_{\max}}}$, 
see~\eqref{eq:badJ}--\eqref{eq:sparser}~below. 
\begin{proof}[Proof of the $0$-statement of Theorem~\ref{thm_nogrounded}]
	Assuming~$\eps \ge n^{-\alpha}$ with~$\alpha < 1/2$ (as we may), we have $\eps^2\Phi_{G,H} = \Omega(n^{1-2\alpha}p^{\eGH}) \gg p^{\eGH}$, so the assumption~$\eps^2\Phi_{G,H} \to 0$ implies~$p \to 0$ and thus~$1-p=\Theta(1)$.  
Since $(G,H)$ has no grounded primals, the desired $0$-statement now follows by combining
the conclusions of Theorem~\ref{thm_generaltail}~\ref{thm_tail0} for the cases~$\Phi_{G,H} \to 0$ and~$\Phi_{G,H} \to \infty$ 
with the conclusion of Lemma~\ref{lem:generalzero} below for~$\Phi_{G,H} \asymp 1$
(formally using, as usual, the subsubsequence principle~\cite[Section~1.2]{JLR}). 
\end{proof}
\begin{lemma}\label{lem:generalzero}%
Let~$(G,H)$ be a rooted graph with no grounded primal subgraphs. 
Then, for all~$p=p(n) \in [0,1]$ and~$\eps=\eps(n) \in (0,1]$ with~$\Phi_{G,H} \asymp 1$ and~$\eps \to 0$,
\begin{equation}\label{eq:lem:generalzero}
\lim_{n \to \infty} \Pr\Bigpar{\max_{\xx \in \osets}|X_\xx - \mu| \ge \eps \mu} = 1 .
\end{equation}
\end{lemma}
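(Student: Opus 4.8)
The plan is to reduce the lemma to the non-concentration of an unrooted subgraph count with bounded mean. Fix a maximal primal subgraph $J_{\max}$ of $(G,H)$ and set $K:=J_{\max}-V(G)$. Since $(G,J_{\max})$ is not grounded, a $(G,J_{\max})$-extension of any root $\xx\in\osets$ is nothing but a copy of $K$ contained in $[n]\setminus\xx$, so writing $Z$ for the number of copies of $K$ in $\Gnp$ we have $X_{G,J_{\max}}(\xx)=Z-\#\{\text{copies of }K\text{ in }\Gnp\text{ meeting }\xx\}$. By \refR{rem:Phibig}~\ref{eq:Phibig:iii} the hypothesis $\Phi_{G,H}\asymp 1$ gives $\lambda:=\mu_{G,J_{\max}}\asymp 1$, hence also $\E Z\asymp n^{v_{J_{\max}}-v_G}p^{e_{J_{\max}}-e_G}\asymp 1$. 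The heart of the matter is that a count with bounded mean cannot concentrate, and the task is to show that this non-concentration is inherited by $X_{G,H}$. (If $H$ is itself primal, so $J_{\max}=H$, then $X_{G,H}=X_{G,J_{\max}}$ and Step~1 below is vacuous; so I will assume $J_{\max}\subsetneq H$.)

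\textbf{Step 1 (propagation).} I would first argue that $\Phi_{J_{\max},H}=n^{\Omega(1)}$: by \refL{lem:nogroundeddensity} we have $m(J_{\max},H)<m(G,H)$, while $\Phi_{G,H}\asymp 1$ forces $p\asymp n^{-1/m(G,H)}$ by \refR{rem:Phibig}~\ref{eq:Phibig:ii}, so $p=\Omega(n^{-1/m(J_{\max},H)+\eta})$ for a constant $\eta>0$, and \refR{rem:Phibig}~\ref{eq:Phibig:iv} then yields $\Phi_{J_{\max},H}=\Omega(n^{\eta})$. Applying \refT{thm_generaltail}~\ref{thm_tail1} to the rooted graph $(J_{\max},H)$ with, say, $t=\mu_{J_{\max},H}\cdot n^{-\eta/4}$ shows that \whp{} every $(J_{\max},H)$-extension count satisfies $X_{J_{\max},H}(\y)=(1+O(n^{-\eta/4}))\mu_{J_{\max},H}$, uniformly over $\y\in\oset{v_{J_{\max}}}$. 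Combining this with the decomposition $Y_{G,H}(\xx)=\sum_{\yy}Y_{J_{\max},H}(\xx\yy)$ (the sum over ordered $(G,J_{\max})$-extensions $\yy$ of $\xx$ in $\Gnp$), the identity $\nu_{G,H}=\nu_{G,J_{\max}}\nu_{J_{\max},H}$ from \eqref{eq:muprod}, and $Y=\aut\cdot X$ from \eqref{eq:YX}, I would deduce that \whp, simultaneously for all $\xx\in\osets$, $X_{G,H}(\xx)/\mu=(1+o(1))\,X_{G,J_{\max}}(\xx)/\lambda$.

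\textbf{Step 2 (non-concentration of $Z$).} Since $\E Z\asymp 1$, the expected number of vertices of $[n]$ lying in some copy of $K$ is at most $v_K\E Z=O(1)$, so Markov's inequality gives that \whp{} fewer than $n-v_G$ vertices are covered; on this event there is a root meeting no copy of $K$, so $\max_{\xx\in\osets}X_{G,J_{\max}}(\xx)=Z$. Now pick a small enough constant $\delta_0\in(0,1/2)$, also below the (positive, constant) lower bound on $\lambda$. If $|Z-\lambda|\ge\delta_0$, the maximising root already witnesses $|X_{G,J_{\max}}(\xx)-\lambda|\ge\delta_0$. Otherwise $|Z-\lambda|<\delta_0$, which forces $Z\ge1$ because $\lambda>\delta_0$, and then any root $\xx$ containing a vertex of some copy of $K$ has $X_{G,J_{\max}}(\xx)\le Z-1<\lambda-\tfrac12$, so again $|X_{G,J_{\max}}(\xx)-\lambda|\ge\delta_0$. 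Either way, \whp{} some $\xx$ satisfies $|X_{G,J_{\max}}(\xx)-\lambda|\ge\delta_0$, and feeding this into Step~1 (using $\lambda\asymp1$) produces an $\xx$ with $|X_{G,H}(\xx)-\mu|\ge\delta_1\mu$ for a constant $\delta_1>0$ once $n$ is large; since $\eps\to0$ this exceeds $\eps\mu$ for large $n$, proving \eqref{eq:lem:generalzero}.

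\textbf{Main obstacle.} The tempting shortcut ``$Z\ne\E Z$ \whp'' is false: when $\E Z$ is close to an integer $m$, one has $Z=m$ with probability bounded away from $0$, and then the root meeting no copy of $K$ has $X_{G,H}(\xx)/\mu\approx m/\lambda\approx1$ and shows no deviation. The key realisation, which Step~2 exploits, is that it is not necessary to force $Z$ itself to deviate: it suffices that the whole family $\{X_{G,J_{\max}}(\xx)\}_{\xx}$ cannot all be squeezed into the narrow window about $\lambda$, and this holds because its maximum equals $Z$, as soon as $Z\ge1$ some root is hit by a copy of $K$ and drops strictly below, and two distinct nonnegative integers cannot both lie within $\delta_0<\tfrac12$ of $\lambda\asymp1$. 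A secondary point worth checking is that the error produced by Step~1 is genuinely $n^{-\Omega(1)}$ whereas the gap exploited in Step~2 is a fixed constant, so there is no need to race the $o(1)$ against the (possibly very slowly) vanishing $\eps$.
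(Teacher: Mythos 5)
Your proposal is correct and follows essentially the same route as the paper: decompose via a maximal primal $J_{\max}$, use \refL{lem:nogroundeddensity}, \refR{rem:Phibig} and \refT{thm_generaltail}~\ref{thm_tail1} to show all $(J_{\max},H)$-extension counts are concentrated, and derive non-concentration of $X_{G,J_{\max}}$ from the fact that the $O(1)$ many copies of $K=J_{\max}-V(G)$ cover $o(n)$ vertices, so that a root avoiding all copies and a root hitting one differ by at least $1$ while the target window has width less than $1$. The only (harmless) differences are bookkeeping: you extract a constant deviation $\delta_0$ rather than the paper's $3\eps$, and you apply \refT{thm_generaltail}~\ref{thm_tail1} with $t=\mu_{J_{\max},H}n^{-\eta/4}$ instead of $t=\tfrac12\eps\mu_{J_{\max},H}$.
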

\begin{proof}%
Note that by increasing~$\eps$ if necessary, we may henceforth assume $\eps \ge n^{-\alpha}$ for any constant~$\alpha > 0$ 
(since increasing~$\eps$ can only decrease the probability on the left-hand side of~\eqref{eq:lem:generalzero} above). 
Let~$J_{\max}$ be a maximal primal subgraph of~$(G,H)$. 
By \refR{rem:Phibig}~\ref{eq:Phibig:ii}--\ref{eq:Phibig:iii}, the assumption~$\Phi_{G,H} \asymp 1$~implies 
\begin{gather}	
\label{eq:muGJmax}
\mu_{G,J_{\max}} \asymp 1 ,\\
\label{eq:conditionPhip}
p = \Omega\bigl(n^{-1/m(G,H)}\bigr) .
\end{gather}

Turning to the details, we start with the claim that, \whp{}, 
\begin{align}
\label{eq:badJ}
\max_{\xx \in \osets} |X_{G,J_{\max}}(\xx) - \mu_{G,J_{\max}}| & > 3\eps \mu_{G,J_{\max}}, \\
\label{eq:sparser}
\max_{\y \in \oset{v_{J_{\max}}}} |X_{J_{\max},H}(\y)  - \mu_{J_{\max},H}|  & < \tfrac{1}{2}\eps \mu_{J_{\max},H} . 
\end{align}
To show that this claim implies the desired $0$-statement, 
we consider ordered extensions and note that multiplying~\eqref{eq:badJ} and~\eqref{eq:sparser} by $\aut(G, J_{\max})$ and $\aut(J_{\max},H)$, respectively, we can replace~$X$~by~$Y$ and~$\mu$~by~$\nu$, cf.~\eqref{eq:YX} and \eqref{eq:numu}. 
Observe that each ordered $(G,H)$-extension corresponds to a unique pair of extensions: one of~$\xx$ with respect to~$(G,J_{\max})$ and one of~$\y$ (which consists of~$\xx$ plus the vertices of the first extension) with respect to $(J_{\max},H)$. 
Consequently, recalling the identity~\eqref{eq:muprod}, inequalities~\eqref{eq:badJ}--\eqref{eq:sparser} imply that there is~$\xx \in \osets$ such that either
\begin{equation}\label{eq:YGHR:1}
	Y_{G,H}(\xx) > (1 + 3\eps)\nu_{G,J_{\max}} \cdot (1 - \eps/2) \nu_{J_{\max},H} > (1 + \eps) \nu_{G,H}
\end{equation}
or 
\begin{equation}\label{eq:YGHR:2}
	Y_{G,H}(\xx) < (1 - 3\eps)\nu_{G,J_{\max}} \cdot (1 + \eps/2) \nu_{J_{\max},H} < (1 - \eps) \nu_{G,H} ,
\end{equation}
which in view of~\eqref{eq:YX} and \eqref{eq:numu} establishes the desired $0$-statement (after rescaling by~$\aut(G,H)$).

It remains to show that~\eqref{eq:badJ} and \eqref{eq:sparser} hold \whp{}, and we start with~\eqref{eq:badJ}. 
Consider the unrooted graph~$K := {J_{\max} - V(G)}$, where the vertices of~$G$ are deleted from~$J_{\max}$. 
By construction, we have~$v_K = \vr{G}{ J_{\max}}$.  
Since~$J_{\max}$ is not grounded, we also have~$e_K = \er{G}{ J_{\max}}$. 
Using~\eqref{eq:muGJmax} we infer 
\begin{equation}\label{eq:muK:Jmax}
\mu_K \asymp n^{v_K}p^{e_K} = n^{\vr{G}{J_{\max}}}p^{\er{G}{J_{\max}}} \asymp \mu_{G,J_{\max}} \asymp 1 ,
\end{equation}
which by Markov's inequality implies that the number of $K$-copies is \whp{} at most~$n/(2v_K)$, say (with room to spare). 
This means that either (i)~there are no $K$-copies, in which case $X_{G,J_{\max}}(\xx) = 0$ for all $\xx \in \osets$, or (ii)~the $K$-copies span at most~$n/2$ vertices, in which case there is one $\xx_1 \in \osets$ that is disjoint from all $K$-copies and another set~$\xx_2 \in \osets$ that intersects at least one $K$-copy, so that~$X_{G,J_{\max}}(\xx_1) = X_K > X_{G,J_{\max}}(\xx_2)$. 
In both cases it follows that~\eqref{eq:badJ} holds \whp{}, since~\eqref{eq:muGJmax} and~$\eps \to 0$ imply that the interval~$(1 \pm 3\eps)\mu_{G,J_{\max}}$ does not contain zero, and moreover contains at most one~integer.

Turning to~\eqref{eq:sparser}, note that~\eqref{eq:sparser} holds trivially when~$J_{\max} = H$. 
Otherwise~$m(J_{\max},H) < m(G,H)$ by Lemma~\ref{lem:nogroundeddensity}, 
so that~\eqref{eq:conditionPhip} implies $p = \Omega( n^{\gamma-1/m(J_{\max},H)} )$ for some constant $\gamma > 0$. 
Using \refR{rem:Phibig}~\ref{eq:Phibig:iv}, it follows that~$\Phi_{J_{\max},H} = \Omega(n^\gamma)$. 
Assuming~$\eps \ge n^{-\alpha}$ with~$\alpha < \gamma / 2$ (as we may), we infer $\eps^2\Phi_{J_{\max},H} =  \Omega(n^{\gamma/ 2 -\alpha}) = n^{\Omega(1)}$. 
Applying Theorem~\ref{thm_generaltail}~\ref{thm_tail1} with~$t = \tfrac{1}{2}\eps \mu_{J_{\max},H}$, 
now~\eqref{eq:sparser} holds~\whp{}. 
\end{proof}
%

\subsection{The $1$-statement}\label{sec:ext:1}
As discussed, for the $1$-statement of Theorem~\ref{thm_nogrounded} we rely on the fact that  no vertex is contained in too many copies of the (unrooted) graph~${J_{\max} - V(G)}$, which is formalized by \refL{lem:scattered} below. 
As usual, given a graph~$K$ with~$v_K \ge 1$, 
subgraphs~$J \subseteq K$ with~$v_J \ge 1$ that maximize the density $d_J := d(\emptyset, J) = e_J/v_J$ are called~\emph{primal} (consistently with rooted graphs terminology), 
and~$K$ is called~\emph{balanced} when~$K$ itself is~primal.  
\begin{lemma}\label{lem:scattered}%
Let $K$ be a balanced graph with $e_K \ge 1$. 
There are constants $\beta, C > 0$ 
such that, for all $p=p(n) \in [0,1]$ with~$n^{\beta - 1/d_K} \ll p = O(n^{\beta - 1/d_K})$, 
in $\Gnp$ \whp{} every vertex~$x \in [n]$ is contained in at most~$C\lambda^{\vr{G_{\min}}{K}}$ copies of~$K$, where~$\lambda := np^{d_K}$ 
and~$G_{\min} \subseteq K$ is a primal subgraph with the smallest number of vertices. 
\end{lemma}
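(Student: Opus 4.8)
Here is how I would approach \refL{lem:scattered}.

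The plan is induction on $v_K$. Write $Z_x$ for the number of copies of $K$ in $\Gnp$ through a vertex $x$; since a copy of $K$ is counted $v_K=O(1)$ times in $\sum_x Z_x$, it suffices to show $\max_x Z_x=O\bigl(\lambda^{v_K-v_{G_{\min}}}\bigr)$ \whp{}. First I would reduce to connected $K$: if $K$ has components $K_1,\dots,K_r$ with $r\ge2$, then each $K_i$ is again balanced with $e_{K_i}/v_{K_i}=d_K$ (as $d_K$ is a weighted average of the $e_{K_i}/v_{K_i}\le d_K$), a copy of $K$ through $x$ splits as a copy of the component of $x$ through $x$ together with disjoint copies of the other components placed anywhere, induction bounds the first factor by $O(\lambda^{v_{K_i}-v_{G_{\min}}})$ \whp{} (a smallest primal of $K_i$ has order $\ge v_{G_{\min}}$ and $\lambda\to\infty$), and \refT{thm_generaltail}~\ref{thm_tail1} applied with $G=\emptyset$ (using $\Phi_{\emptyset,K_j}=\mu_{\emptyset,J}=n^{\Omega(1)}$ for a smallest primal $J$ of $K_j$) bounds the number of copies of each $K_j$ in $\Gnp$ by $O(\lambda^{v_{K_j}})=O(\mu_{\emptyset,K_j})$ \whp{}; multiplying gives the claim.

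So assume $K$ is connected. If $K$ is strictly balanced (equivalently $G_{\min}=K$, so the target is $O(1)$), I would argue that \whp{} no vertex lies in two distinct copies: if $x\in L_1\cap L_2$ with $L_1\ne L_2$ copies of $K$, then the overlap $L_1\cap L_2$ is a proper subgraph of $K$, hence has $e_{L_1\cap L_2}\le d_K v_{L_1\cap L_2}-c_0$ for a constant $c_0=c_0(K)>0$ (the case $V(L_1)=V(L_2)$ being immediate), which forces $e_{L_1\cup L_2}\ge d_K v_{L_1\cup L_2}+c_0$; since $L_1\cup L_2$ has $O(1)$ vertices, the expected number of such configurations, summed over the finitely many possible shapes $U$, is $O\bigl(\sum_U n^{v_U}p^{e_U}\bigr)\le O\bigl(\sum_U n^{-c_0/d_K+\beta e_U}\bigr)=o(1)$ once $\beta$ is a small enough constant, and a union bound finishes this case (this also settles all small base cases).

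For the inductive step assume $G_{\min}\subsetneq K$ and split $Z_x=Z_x^{\mathrm{in}}+Z_x^{\mathrm{out}}$ according to whether a copy of $K$ through $x$ does or does not contain a copy of $G_{\min}$ through $x$. For $Z_x^{\mathrm{in}}$, each such copy arises by extending some copy of $G_{\min}$ through $x$ to $K$, so $Z_x^{\mathrm{in}}\le O(1)\cdot(\#\,G_{\min}\text{-copies through }x)\cdot\max_{\y}X_{G_{\min},K}(\y)$; the first factor is $O(1)$ \whp{} by the strictly-balanced case applied to $G_{\min}$ (which is strictly balanced, being a smallest primal), and the second is $O(\mu_{G_{\min},K})=O(\lambda^{v_K-v_{G_{\min}}})$ \whp{} by \refT{thm_generaltail}~\ref{thm_tail1} applied to $(G_{\min},K)$ — the point being that $G_{\min}$ primal forces $\Phi_{G_{\min},K}=\min_{G_{\min}\subseteq J\subseteq K,\ e_J>e_{G_{\min}}}\mu_{G_{\min},J}=n^{\Omega(1)}$. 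For $Z_x^{\mathrm{out}}$, which is the sum over the $O(1)$ vertices $w$ of $K$ lying in no copy of $G_{\min}$ of the number of copies of $K$ through $x$ with $w$ placed at $x$: if $w$ lies in no \emph{proper} primal of $K$, this count is $O(1)$ \whp{} by the density-deficiency argument of the previous paragraph (two such copies would have to agree on a primal subgraph through $w$ to keep the union's density $\le d_K$, and none exists); otherwise let $G_w\subsetneq K$ be the smallest primal containing $w$, which is again a connected primal on $<v_K$ vertices (intersections and unions of primals of a balanced graph are primal), note that such a copy of $K$ contains a copy of $G_w$ through $x$ (as $w\in G_w$), and bound the count by $O(1)\cdot(\#\,G_w\text{-copies through }x)\cdot\max_{\y}X_{G_w,K}(\y)=O(\lambda^{v_{G_w}-v_{G_{\min}}})\cdot O(\lambda^{v_K-v_{G_w}})=O(\lambda^{v_K-v_{G_{\min}}})$ \whp{}, using induction for $G_w$, \refT{thm_generaltail}~\ref{thm_tail1} for $(G_w,K)$ (again $G_w$ primal gives $\Phi_{G_w,K}=n^{\Omega(1)}$), and that a smallest primal of $G_w$ has order $\ge v_{G_{\min}}$. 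Summing the $O(1)$ contributions gives $Z_x^{\mathrm{out}}=O(\lambda^{v_K-v_{G_{\min}}})$; taking $\beta$ to be the minimum of the finitely many constants needed, and invoking \refR{rem:thm_generaltail} so that the probabilistic inputs survive the union bound over $x$, completes the induction.

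I expect the main obstacle to be the density bookkeeping: the structural facts that a smallest primal subgraph is itself primal and that intersections and unions of primals of a balanced graph are primal; verifying $\Phi_{G_{\min},K},\Phi_{G_w,K}=n^{\Omega(1)}$ so that the strong (polynomially small error) concentration of \refT{thm_generaltail}~\ref{thm_tail1} is available; and above all the density-deficiency claim behind both the strictly-balanced case and the ``$w$ in no proper primal'' subcase — that the union of two distinct copies of $K$ overlapping at a vertex placed on a vertex of $K$ lying in no primal of the relevant kind has density strictly above $d_K$, which is exactly what prevents the ``peripheral'' copies from proliferating — together with the need to choose $\beta$ and $c_0$ uniformly over all the finitely many subgraph shapes that occur.
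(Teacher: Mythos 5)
Your structural skeleton --- the density--deficiency argument showing that in the strictly balanced case no vertex lies in two copies, the observation that $G_{\min}$ is itself strictly balanced, and the idea of climbing a chain of primals from $G_{\min}$ up to $K$ --- closely parallels parts \ref{eq:sc:i}, \ref{eq:sc:iv} and \ref{eq:sc:v} of the paper's Lemma~\ref{lem:scattered:technical}. The gap is in the probabilistic engine you use for the climbing step. At several places (the reduction to connected components, the bound on $Z_x^{\mathrm{in}}$ via $\max_{\y}X_{G_{\min},K}(\y)$, and the bound on $Z_x^{\mathrm{out}}$ via $\max_{\y}X_{G_w,K}(\y)$) you invoke \refT{thm_generaltail}~\ref{thm_tail1}, justifying its hypothesis by asserting that primality of $G'$ forces $\Phi_{G',K}=n^{\Omega(1)}$. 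This is false in general. Since $G'$ and $K$ are both primal, $d(G',K)=d_K$ exactly, so $m(G',K)=d_K$ and $\mu_{G',K}=\Theta(\lambda^{v_K-v_{G'}})$; the lemma's hypothesis only gives $p\gg n^{-1/d_K}$, i.e.\ $\lambda\to\infty$, so $\Phi_{G',K}$ can be polylogarithmic. (Take $K$ to be two disjoint triangles, $G'=G_{\min}$ one of them, and $p=n^{-1}\log n$: then $\Phi_{G_{\min},K}=n^3p^3=(\log n)^3$; connected examples such as two copies of $K_4$ joined by a path of length three behave the same way.) The ``primality gives $\Phi=n^{\Omega(1)}$'' mechanism, which the paper does use, works only for the \emph{maximal} primal (Lemma~\ref{lem:nogroundeddensity}): there every strictly larger subgraph has strictly smaller relative density. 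For the \emph{minimal} primal the larger primals sit above it at density exactly $d_K$, and the mechanism fails. Without $\Phi_{G',K}=n^{\Omega(1)}$ you get neither the $O(\mu_{G',K})$ bound nor the $o(n^{-v_{G'}})$ failure probability needed to survive the union bound over all roots $\y$; falling back on Chebyshev or Markov costs an unbounded multiplicative factor, which destroys the constant $C$ in the conclusion. This is precisely the regime (subpolynomial $\lambda$) in which the lemma is applied in Section~\ref{sec:ext:1}, so the gap is not cosmetic.

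The paper closes exactly this gap with a bespoke exponential tail bound between \emph{consecutive} primals $G_0\subsetneq G_1$ in the chain (Lemma~\ref{lem:scattered:technical}~\ref{eq:sc:ii}--\ref{eq:sc:iii}): whp all copies of $G_1$ over a fixed copy of $G_0$ are vertex-disjoint outside it, so the probability of having $z=\lceil B\lambda^{v_{G_1}-v_{G_0}}\rceil$ of them is at most $\binom{O(n^{v_{G_1}-v_{G_0}})}{z}p^{(e_{G_1}-e_{G_0})z}\le p^{e_{G_0}}\exp\bigl(-\lambda^{v_{G_1}-v_{G_0}}\bigr)$, which beats the $\lambda^{v_{G_0}}$ expected number of base copies as soon as $\lambda\to\infty$. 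If you replace each appeal to \refT{thm_generaltail}~\ref{thm_tail1} by a bound of this type along your chain $G_{\min}=G_0\subsetneq\dots\subsetneq K$ (and treat disconnected $K$ by the same chain rather than by a separate component count), the rest of your plan should go through.
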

\noindent
We defer the density based proof to \refApp{apx:scattered} 
(which is rather tangential to the main~argument here), 
and now use \refL{lem:scattered} to prove the desired $1$-statement~of~Theorem~\ref{thm_nogrounded}. 
\begin{proof}[Proof of the $1$-statement of~\refT{thm_nogrounded}]%
The assumptions~$\eps \le 1$ and~$\eps^2\Phi_{G,H} \to \infty$ imply~$\Phi_{G,H} \to \infty$, so \refR{rem:Phibig}~\ref{eq:Phibig:i} implies~$p\gg~n^{-1/m(G,H)}$. 
If~$\eps^2\Phi_{G,H} = n^{\Omega(1)}$, then the desired $1$-statement follows from Theorem~\ref{thm_generaltail}~\ref{thm_tail1}, 
so we may further assume~$\eps^2 \Phi_{G,H} \le n^c$ for any constant~$c>0$ of our choice, 
which together with the assumption~$\eps \ge n^{-\alpha}$ implies $\Phi_{G,H} \le n^{c + 2\alpha}$. 
Using the contrapositive of \refR{rem:Phibig}~\ref{eq:Phibig:iv}, by choosing~$\alpha,c>0$ sufficiently small (as we may) 
we thus henceforth can~assume 
\begin{equation}	\label{eq:conditionPhi}
	n^{-1/m(G,H)} \ll p \ll n^{\beta - 1/m(G,H)},
\end{equation}
where the constant~$\beta > 0$ is as given by~Lemma~\ref{lem:scattered}.

Turning to the details, let~$J_{\max}$ be a maximal primal subgraph of~$(G,H)$. 
For convenience we use ordered extensions, as before.  
Note that~$Y_{G,J_{\max}}(\xx)$ is the number of (unrooted) copies of graph~$K := {J_{\max} - V(G)}$ that are disjoint from~$\xx$. 
For any vertex~$x \in [n]$, let~$Z_K(x)$ denote the number of copies of~$K$ containing~$x$. 
We fix some~$\xx' \in \osets$, and start with the claim that there exists a constant $D>0$ such that, \whp{}, 
\begin{align}
\label{eq:XRGprime}
	|Y_{G,J_{\max}}(\xx') - \nu_{G,J_{\max}}| &< \tfrac 1 8\eps\nu_{G,J_{\max}}. \\
\label{eq:goodcount}
\max_{\y \in \oset{v_{J_{\max}}}} |Y_{J_{\max},H}(\y)  - \nu_{J_{\max},H}| &< \tfrac{1}{2}\eps \nu_{J_{\max},H},\\
\label{eq:vertexbounded}
	\max_{x \in [n]} Z_K(x) & \le D\frac{\eps\nu_{G, J_{\max}}}{\eps^2\Phi_{G,H}}.
\end{align}
We now show that this claim implies the desired $1$-statement. 
In view of~\eqref{eq:XRGprime}, the first step is to use~\eqref{eq:vertexbounded} to show that $Y_{G,J_{\max}}(\xx)$ is also concentrated for the remaining roots $\xx \neq \xx'$. 
Specifically, using~\eqref{eq:vertexbounded} to bound the number of $(G, J_{\max})$-extensions of $\xx$ that overlap with $\xx'$ (and those of $\xx'$ overlapping with $\xx$),
in view of the assumption~$\eps^2\Phi_{G,H} \to \infty$ it follows that, for every~$\xx \in \osets$,  
\begin{equation*}
	|Y_{G,J_{\max}}(\xx) - Y_{G,J_{\max}}(\xx')| 
	\le O\Bigpar{\sum_{x \in \xx \cup \xx'} Z_K(x)} \ll \tfrac 1 8 \eps\nu_{G,J_{\max}} .
\end{equation*}
Together with~\eqref{eq:XRGprime} this implies that, say, 
\begin{equation}\label{eq:XRG}
	\max_{\xx \in \osets} |Y_{G,J_{\max}}(\xx) - \nu_{G,J_{\max}}| < \tfrac 1 4 \eps\nu_{G,J_{\max}} .
\end{equation}
The second step exploits that by~\eqref{eq:goodcount} each copy of $J_{\max}$ extends to the `right' number of copies of~$H$.
Indeed, with analogous reasoning as for~\eqref{eq:YGHR:1}--\eqref{eq:YGHR:2} from \refS{sec:ext:0}, by combining~\eqref{eq:goodcount} with~\eqref{eq:XRG} 
it now follows (in view of~\eqref{eq:muprod}) that 
\[
	\max_{\xx \in \osets} Y_{G,H}(\xx) < (1 + \eps/4)\nu_{G,J_{\max}} \cdot (1 + \eps/2)\nu_{J_{\max},H} < (1 + \eps)\nu_{G,H} ,
\]
and similarly, 
\[
   \min_{\xx \in \osets} Y_{G,H}(\xx) > (1 - \eps)\nu_{G,H} ,
\]
which in view of~\eqref{eq:YX}--\eqref{eq:numu} establishes the $1$-statement of Theorem~\ref{thm_nogrounded} (by rescaling by $\aut(G,H)$).

It remains to show that~\eqref{eq:XRGprime}--\eqref{eq:vertexbounded} hold \whp{}, and we start with~\eqref{eq:XRGprime}. 
Since $\Phi_{G,J_{\max}} \ge \Phi_{G,H}$ by definition, 
using Chebyshev's inequality together with the variance estimate~\eqref{eq:Variance} and $\eps^2 \Phi_{G,H} \to \infty$, it follows that
\begin{equation*}
\begin{split}
\prob{|X_{G,J_{\max}}(\xx') - \mu_{G,J_{\max}}| \ge \tfrac 1 8\eps\mu_{G,J_{\max}}}  
	& \le \frac{\Var X_{G,J_{\max}}(\xx)}{(\eps/8)^2\mu_{G,J_{\max}}^2} 
	\asymp \frac{1 - p}{\eps^2\Phi_{G,J_{\max} }} \le \frac{1}{\eps^2\Phi_{G,H}} \to 0 ,
\end{split}
\end{equation*}
which in view of~\eqref{eq:YX}--\eqref{eq:numu} then implies that~\eqref{eq:XRGprime} holds \whp{} (by rescaling by $\aut(G,J_{\max})$).

Next we establish~\eqref{eq:goodcount}. Note that the proof of~\eqref{eq:sparser} only relies on~\eqref{eq:conditionPhip} (which here holds by \eqref{eq:conditionPhi}), and that we may assume~$\eps \ge n^{-\alpha}$ for sufficiently small~$\alpha>0$. 
Hence by the same argument as for~\eqref{eq:sparser}, 
in view of~\eqref{eq:YX}--\eqref{eq:numu} it follows that~\eqref{eq:goodcount} holds~\whp{} (after rescaling by~$\aut(J_{\max},H)$).

Finally, we turn to the auxiliary estimate~\eqref{eq:vertexbounded}. 
Note that every subgraph~$J \subseteq K$ with~$v_J \ge 1$ satisfies~$d_J = d(G,G \cup J)$. 
Hence~$J$ is primal for~$K$ if and only if~$G \cup J$ is primal for~$(G,J_{\max})$. 
Since~$J_{\max}= G \cup K$ is primal for~$(G,H)$, it follows that~$K$ is balanced, with~$d_K=d(G,J_{\max})=m(G,H)$. 
Using assumption~\eqref{eq:conditionPhi}, we thus have~$n^{-1/d_K} \ll p \ll n^{\beta-1/d_K}$. 
Invoking Lemma~\ref{lem:scattered}, there is a constant~$C>0$ such that,~\whp{},     
\[
	\max_{x \in [n]} Z_K(x) \le C \lambda^{\vr{G_{\min}}{K}},
\]
where $G_{\min}$ is a primal subgraph of~$K$ with the smallest number of vertices. In particular, we have~$d_K=d_{G_{\min}}$. 
Since~$J_{\max}$ is a vertex-disjoint union of the graphs~$K$ and~$G$, 
using~${G_{\min} \subseteq K}$ we infer that~$e_{G_{\min}} = {\er{G}{G \cup G_{\min}}}$ and~$v_{G_{\min}} ={\vr{G}{G \cup G_{\min}}}$. 
Recalling~$\lambda = np^{d_K} = np^{d_{G_{\min}}}$, it now follows analogously to~\eqref{eq:muK:Jmax}~that 
\begin{equation*}
	\lambda^{\vr{G_{\min}}{K}} = \frac{n^{v_K} p^{e_K}}{n^{v_{G_{\min}}} p^{e_{G_{\min}}}} \asymp \frac{\mu_K}{\mu_{G_{\min}}} \asymp  \frac{\mu_{G, J_{\max}}}{\mu_{G,G \cup G_{\min}}} \le \frac{\mu_{G, J_{\max}}}{\Phi_{G,H}} ,
\end{equation*}
which together with~$1 \le 1/\eps = \eps/\eps^2$ and~\eqref{eq:numu} completes the proof of~\eqref{eq:vertexbounded} for suitable~$D>0$. 
\end{proof}

\section{Further cases}\label{sec:further}

\subsection{Unique and grounded primal case (\refT{thm_unique})}\label{s_unique}
In this section we prove \refT{thm_unique} by adapting the arguments from \refS{s_nogrounded} (focusing on the unique primal~$J=J_{\max}$). 
The key difference is that here we can use the $0$- and $1$-statements of our main result \refT{thm_strictly_balanced} to deduce that~$X_{G,J}(\xx)$ is not concentrated for some~$\xx$, or concentrated for all~$\xx$, respectively.  
This then allows us to prove the desired $0$- and $1$-statements, since each copy of~$J$ again extends to the `right' number of copies of~$H$ (by Theorem~\ref{thm_generaltail}~\ref{thm_tail1}, as in Section~\ref{s_nogrounded}); see \eqref{eq:JHconc}--\eqref{eq:GJconc} and~\eqref{eq:GJoff} below. 
\begin{proof}[Proof of \refT{thm_unique}]%
If~$\Phi_{G,H} \to 0$, then the $0$-statement holds by \refT{thm_generaltail}~\ref{thm_tail0}. 
Therefore we henceforth can assume~$\Phi_{G,H} = \Omega(1)$, which by \refR{rem:Phibig}~\ref{eq:Phibig:ii} is equivalent to
\begin{equation}\label{eq:overthreshold}
	p = \Omega\bigl(n^{-1/m(G,H)}\bigr).
\end{equation}
Note that the proof of~\eqref{eq:sparser} relies only on~\eqref{eq:conditionPhip} (which is the same as \eqref{eq:overthreshold}), the fact that $J_{\max}$ is the maximal primal (which also holds trivially for~$J$ in the current setting), 
and that we may assume $\eps \ge n^{-\alpha}$ for sufficiently small~$\alpha>0$ (which we may also assume here). 
Hence by the same argument as for~\eqref{eq:sparser}, 
after rescaling by~$\aut(J,H)$ (see~\eqref{eq:YX}--\eqref{eq:numu}) we here obtain that, \whp{}, 
\begin{equation}\label{eq:JHconc}
	\max_{\y \in \oset{v_J}}|Y_{J,H}(\y) - \nu_{J,H}| < \tfrac{1}{2} \eps \nu_{J,H}. 
\end{equation}

We start with the~$1$-statement. Since~$\mu_{G,J} \ge \Phi_{G,H}$ by definition, the assumption~$\eps^2\Phi_{G,H} \ge C \log n$ implies~$\eps^2\mu_{G,J} \ge C \log n$. By uniqueness of the primal~$J$, the rooted graph $(G,J)$ is strictly balanced. 
Therefore~\eqref{eq:main:strbal} of~\refT{thm_strictly_balanced} implies (after rescaling by~$\aut(G,J)$) that, for suitable~$\alpha, C > 0$,~\whp{}, 
	\begin{equation}\label{eq:GJconc}
		\max_{\xx \in \osets} |Y_{G,J}(\xx) - \nu_{G,J}| < \tfrac{1}{4}\eps \nu_{G,J}. 
	\end{equation}
The $1$-statement of \refT{thm_unique} now follows from~\eqref{eq:JHconc} and~\eqref{eq:GJconc} 
by exactly the same reasoning with which~\eqref{eq:goodcount} and~\eqref{eq:XRG} from Section~\ref{sec:ext:1} 
implied the $1$-statement of \refT{thm_nogrounded}.

We now turn to the~$0$-statement. We again plan to apply~\eqref{eq:main:strbal} of~\refT{thm_strictly_balanced} to the strictly balanced rooted graph~$(G,J)$, 
for which we need to check that the assumption~$\eps^2 \Phi_{G,H} \le c \log n$ implies the required condition~$\eps^2 \mu_{G,J} \le c\log n$. 
We will do this by showing that~$\Phi_{G,H} = \mu_{G,J}$ for~$n$ large enough. 
First, note that the assumptions~$\eps \ge n^{-\alpha}$ and~$\eps^2 \Phi_{G,H} \le c \log n$ imply~$\Phi_{G,H} = O(n^{2\alpha}\log n)$. 
By~\eqref{eq:overthreshold} and the contrapositive of \refR{rem:Phibig}~\ref{eq:Phibig:iv} we can thus assume that, say,  
\begin{equation}\label{eq:closetohreshold}
	p \asymp n^{\theta - 1/m(G,H)} \quad \text{ with } \quad \theta = \theta(n,p) \in [0,3\alpha].
\end{equation}
Since the primal~$J$ is unique, we have~$d(G,J)=m(G,H)$, and~$d(G,K)< m(G,H)$ when~$G \subsetneq K \subseteq H$ satisfies $J \neq K$.  
Hence there exists a constant~$\gamma=\gamma(G,J,H)>0$ 
such that, for any $G \subsetneq K \subseteq H$, 
	\begin{equation}\label{eq:mu:minimal}
		\mu_{G,K} \asymp \left( np^{d(G,K)} \right)^{\vr{G}{K}} \asymp \left( n^{1 - \frac{d(G,K)}{m(G,H)} + \theta d(G,K)} \right)^{\vr{G}{K}} = \begin{cases}
			\Omega(n^{\gamma}) & \text{if~$K \neq J$,} \\
			O(n^{3\alpha (\er{G}{J})}) & \text{if~$K = J$.} 
		\end{cases}
	\end{equation}
By taking~$\alpha > 0$ small enough, it follows that~$\Phi_{G,H} = \mu_{G,J}$ for~$n$ large enough, 
which (as discussed) establishes~$\eps^2 \mu_{G,J} \le c \log n$. 
Therefore~\eqref{eq:main:strbal} of~\refT{thm_strictly_balanced} implies (after rescaling by $\aut(G,J)$) that, \whp{},  
\begin{equation}\label{eq:GJoff}
	\max_{\xx \in \osets} |Y_{G,J}(\xx) - \nu_{G,J}| \ge 3\eps \nu_{G,J}. 
\end{equation}
The $0$-statement of \refT{thm_unique} now follows from~\eqref{eq:GJoff} and~\eqref{eq:JHconc} 
by the same (routine) reasoning with which~\eqref{eq:badJ}--\eqref{eq:sparser} from Section~\ref{sec:ext:0} 
implied the $0$-statement of \refT{thm_nogrounded}. 
\end{proof}
\begin{remark}[Theorem~\ref{thm_unique}: stronger $1$-statement]\label{rem:thm_unique1} %
The above proof yields, in view of Remarks~\ref{rem:thm_generaltail}--\ref{rem:thm_strictly_balanced}, the following stronger conclusion: 
for any fixed~$\tau > 0$ there is a constant~$C=C(\tau,G,H)>0$ such that 
the~$1$-statement in~\eqref{eq:thm:unique} of Theorem~\ref{thm_unique} 
holds with probability~$1 - o(n^{-\tau})$. 
\end{remark}

\subsection{Strictly balanced and ungrounded case (\refT{thm_strictly_balanced})}\label{sec:ext:non}
In this section we prove the threshold~\eqref{eq:main:strbal:non} of~\refT{thm_strictly_balanced}~(ii) for strictly balanced rooted graphs~$(G,H)$ that are not grounded, 
which turns out to be a simple corollary of~\refT{thm_nogrounded}. 
The crux is that, by decreasing~$\alpha>0$ (if~necessary), we can ensure that the {$0$-}~and {$1$-statement} conditions in~\eqref{eq:main:strbal:non} and~\eqref{eq:main:nogrounded} coincide. 
\begin{proof}[Proof of~\eqref{eq:main:strbal:non} of~\refT{thm_strictly_balanced}]
Recall that~$\mu=\mu_{G,H}$ and $\Phi = \Phi_{G,H}$ are defined in~\eqref{def:muGH} and~\eqref{eq_PhiGH}, respectively. 
By assumption the unique primal~$H$ is not grounded, so \refT{thm_nogrounded} applies.
Decreasing the constant~$\alpha>0$ from \refT{thm_nogrounded}, we can assume that~$\beta \ge 3\alpha$, where~$\beta>0$ is the constant given by \refL{lem:StrBal}~\ref{eq:StrBal:density}. 
We now distinguish two ranges of~$p=p(n)$. 
First, when~$p \le n^{-1/d(G,H)+\beta}$, then~\eqref{eq:lem:density:subs} from \refL{lem:StrBal} implies that~$\mu=\Phi$ for~$n$ large enough (since~\eqref{eq:lem:density:subs} implies~$\mu_{G,H}/\mu_{G,J} \asymp n^{v_H - v_J}p^{e_H - e_J} \ll 1$ for all~$G \subseteq J \subsetneq H$ with~$e_J > e_G$). 
Second, when~$p \ge n^{-1/d(G,H)+\beta} \ge n^{-1/m(G,H)+3\alpha}$, then~$\eps \ge n^{-\alpha}$ and \refR{rem:Phibig}~\ref{eq:Phibig:iv} imply that~$\min\{\eps^2 \mu, \eps^2\Phi\} \ge n^{-2\alpha} \cdot \Phi = \Omega(n^{\alpha}) \to \infty$. 
Since in both ranges the {$0$-}~and {$1$-statement} conditions in~\eqref{eq:main:strbal:non} and~\eqref{eq:main:nogrounded} coincide, 
it follows that \refT{thm_nogrounded} implies~\eqref{eq:main:strbal:non}. 
\end{proof}

\section{Cautionary examples (\refP{prop:counterexample:2} and~\ref{prop:counterexample:1})}\label{sec:counterexample} 
In this section we prove Propositions~\ref{prop:counterexample:2}--\ref{prop:counterexample:1} 
for the rooted graphs~(e) and~(f) depicted in~\refF{counterexample}.  
The proof idea for~\refP{prop:counterexample:2} is to proceed in two rounds for a fixed vertex~$x$: 
using~\refT{thm_strictly_balanced} we first find about~$\mu_{G,K_4}$ many~$(G,K_4)$-extensions of~$\xx=(x)$, 
which we then extend to about~$\mu_{G,H}$ many~$(G,H)$-extensions of~$\xx$.  
The crux is that most of the relevant~$(K_4,H)$-extensions from the second round evolve nearly independently, 
which ultimately allows us to surpass the conditions of Spencer's result~\eqref{eq_Spencer_SB} and~\refT{thm_strictly_balanced} for~$(K_4,H)$. 
\begin{proof}[Proof of \refP{prop:counterexample:2}]%
Recalling~$\omega = np^2$, by assumption we have~$\eps^2 \mu_{G,K_4} \asymp \eps^3\omega^3 \gg \log n$ and~$\eps^2 \mu_{K_4,H} \le \mu_{K_4,H} \asymp \omega \ll \log n$, which readily implies~$\log \omega \asymp \log \log n$ and~$p=n^{-1/2+o(1)}$. 
Now it is not difficult to verify that~$\Phi_{G,H} \asymp \mu_{G,K_4} \asymp \omega^3$ (either directly, or similarly as for~\eqref{eq:mu:minimal} from \refS{s_unique}). 
Turning to the details of the $1$-statement, 
we start with the auxiliary claim that, whp, for each vertex~$x$ the following event~$\cP_x$~holds:%
\begin{romenumerate2}
\item\label{eq:Pv:i}
The vertex-neighbourhood~$\Gamma_x$ of~$x$ has size~$|\Gamma_x| \le 9np$.
\item\label{eq:Pv:ii}
The collection~$\cT_x$ of all triangles spanned by~$\Gamma_x$ 
has size~$|\cT_x| = (1\pm \eps/9) \tbinom{n-1}{3}p^6$.
\item\label{eq:Pv:iii}
Every vertex~$y \in \Gamma_x$ is contained in at most~$D:=15$ triangles from~$\cT_x$. 
\end{romenumerate2}
Indeed, invoking the $1$-statement of \refT{thm_strictly_balanced} with~$H$ equal to~$K_4$ and~$G$ being the root vertex~$v$, 
from $\eps ^2 \mu_{G,K_4} \asymp \eps^2\omega^3 \gg \log n$ it follows that, whp,~\ref{eq:Pv:ii} holds for all vertices~$x$.
Since~$np = n^{1/2+o(1)} \gg \log n$, using standard Chernoff bounds it is routine to see that, whp,~\ref{eq:Pv:i} holds for all vertices~$x$. 
We claim that if~\ref{eq:Pv:iii}~fails for some~$y \in \Gamma_x$, then there are $4$~triangles in~$\cT_x$ containing $y$ that form either a flower (share no vertices other than~$y$) or a book (all contain an edge~$yz$ for some~$z \in \Gamma_x$): 
to see this, note that if we assume the contrary, then for a maximal flower (with at most~$3$ triangles) each edge of it is contained in at most~$2$ other $\cT_x$-triangles, whence there are at most~$3 + 6 \cdot 2 = 15$ triangles in~$\cT_x$ containing~$y$.
The probability that there is either a \mbox{$4$-flower} or \mbox{$4$-book} with all vertices connected to some extra vertex~$x$ is at most $n^{10}p^{21} + n^7p^{16} = n^{-1/2 + o(1)} \to 0$.
It follows that, whp, properties~\ref{eq:Pv:i}--\ref{eq:Pv:iii} hold for all vertices~$x$, establishing the~claim. 

We now fix a root vertex~$x$, and expose the edges of~$\Gnp$ in two rounds: 
in the first round we expose all edges incident to~$x$ and all edges inside~$\Gamma_x$, 
and then in the second round we expose all remaining edges. 
We henceforth condition on the outcome of the first exposure round, and assume that~$\cP_x$ holds. 
As usual, to avoid clutter we shall omit this conditioning from our notation. 
Given distinct vertices~$a,b \in \Gamma_x$, 
let~$Y_{a,b}$ denote the number of common neighbours of~$a$ and~$b$ in~$[n] \setminus (\{x\} \cup \Gamma_x)$.  
Note that~$\eps \omega \gg \sqrt{\log n/\omega} \gg 1$ by assumption. 
Since, by~\ref{eq:Pv:ii}, $|\cT_x| \asymp n^3p^6 = \omega ^3 \ll \eps \omega^4 \asymp \eps \mu$,
using~\ref{eq:Pv:iii} it is not difficult to see that
\begin{equation}\label{eq:Xx}
Z_x := \sum_{abc \in \cT_x}(Y_{a,b}+Y_{b,c} + Y_{a,c}) \qquad \text{satisfies} \qquad \bigl|X_{(x)}-Z_x\bigr| \: \le \:  3|\cT_x| \cdot D \ll \eps \mu/2 .
\end{equation}
Using~\ref{eq:Pv:i} and~$\eps \omega \gg 1$ (see above) we infer $1+|\Gamma_x| \le 10np = n^{1/2+o(1)} \ll n/\omega \ll \eps n$, 
and together with~\ref{eq:Pv:ii} it then follows that, say,  
\begin{equation}\label{eq:EZv} 
\E Z_x = 3|\cT_x| \cdot \bigpar{n-1-|\Gamma_x|} p^2 = (1\pm \eps/8) \mu . 
\end{equation}
In~\eqref{eq:Xx} we now write each~$Y_{a',b'}$ as a sum of indicators of length~$2$~paths,  
which enables us to estimate the lower tail of~$Z_x$ via Janson's inequality. 
By distinguishing between pairs of edge-overlapping paths that share one or two endpoints, 
using~\ref{eq:Pv:iii} it is standard to see that the relevant~$\Delta$ term is at most~$\E Z_x \cdot (2D p + 2D) = O(\E Z_x)$, say.  
Using~$\eps^2 \E Z_x \asymp \eps^2 \omega^4 \gg \log n$, 
by invoking~\cite[Theorem~1]{RiordanWarnke2015} it then follows~that 
\begin{equation}\label{eq:Zv:upper}
\Pr(Z_x \le (1-\eps/8) \E Z_x) \le \exp\bigpar{- \Omega(\eps^2 \E Z_x)} = o(n^{-1}) .
\end{equation}
Using~\ref{eq:Pv:iii} we also see that any path shares an edge with a total of at most~$2 D = O(1)$ paths, 
which enables us to estimate the upper tail of~$Z_v$ via concentration inequalities for random variables with `controlled dependencies'. 
In particular, by invoking~\cite[Proposition~2.44]{JLR} (see also~\cite[Theorem~9]{W14}) it follows~that 
\begin{equation}\label{eq:Zv:lower}
\Pr(Z_x \ge (1+\eps/8) \E Z_x) \le \exp\bigpar{- \Omega(\eps^2 \E Z_x)} = o(n^{-1}) .
\end{equation}
To sum up, \eqref{eq:Xx}--\eqref{eq:Zv:lower} and~$1-\eps/2 < (1 \pm \eps/8)^2 < 1 + \eps/2$ imply~$\Pr(|X_{(x)}-\mu| \ge \eps \mu \mid \cP_x) = o(n^{-1})$, 
which readily completes the proof of the desired $1$-statement (since, whp,~$\cP_x$ holds for all~$n$ vertices~$x$). 
\end{proof}

The proof idea for \refP{prop:counterexample:1} is to find a copy of~$K_4$ with an edge that is contained in extremely many triangles. 
To this end we proceed in two steps, inspired by~\cite[Lemma~3]{SW18}: 
in the first step we find~$\Theta(n)$ many vertex-disjoint copies of~$K_4$, 
and in the second step we then find the desired edge contained in many~triangles. 
\begin{proof}[Proof of \refP{prop:counterexample:1}]%
Note that~$\mu \asymp \omega^5$.  
As in the proof of \refP{prop:counterexample:2}, we again have~$\log \omega \asymp \log \log n$ and $\Phi_{G,H} \asymp \mu_{G,K_4} \asymp \omega^3$,  
so~$\eps^2 \Phi_{G,H}\asymp \eps^2 \mu_{G, K_4} \gg \log n$ by assumption.  
Noting~$0.39< 2/5$, we~define
\begin{equation}\label{eq:CE:defz}
z := \Bigceil{2 \bigpar{(1+\eps) \mu}^{1/2}} \asymp \omega^{5/2} = o(\log n/\log \omega) .
\end{equation}

Turning to the details of the desired $0$-statement, 
let~$Y_{K_4}$ denote the size of the largest collection of vertex-disjoint copies of~$K_4$ spanned by the vertices in~$W:=\{1, \ldots, \lfloor n/2 \rfloor \}$.
It is routine to check that the minimum of~${|W|^{v_G}p^{e_G}}$, taken over all~${G \subseteq K_4}$ with~${v_G \ge 1}$, equals~$|W| \approx n/2$ for~$n$ large enough. 
Since~the induced subgraph of $\Gnp$ spanned by~$W$ has the same distribution as~$\G_{|W|,p}$, 
by invoking~\cite[Theorem~3.29]{JLR} there is a constant~$c>0$ such~that 
\begin{equation}\label{eq:XK4v}
  \Pr(Y_{K_4} \ge cn) = 1 - o(1).
\end{equation}

We now condition on the edges spanned by~$W$, and assume that~$Y_{K_4} \ge cn$. 
To avoid clutter, we shall again omit this conditioning from our notation (as in the proof of \refP{prop:counterexample:1}).  
We henceforth fix~$\ceil{cn}$ vertex-disjoint copies of~$K_4$ spanned by~$W$, 
and from the~$i$-th such copy 
we pick an edge~${\{v_i,w_i\}}$ and a further vertex~${x_i \not\in \{v_i,w_i\}}$. 
Defining~$Z_i$ as the number of vertices in~$[n] \setminus W$ that are common neighbours of~$v_i$ and~$w_i$, 
using~$\tbinom{m}{z} \ge (m/z)^z$ for~$m \ge z$ together with~$np^2 = \omega = o(z)$ and~\eqref{eq:CE:defz}, it routinely follows~that 
\begin{equation*}
\Pr(Z_i \ge z) 
\ge 
\binom{\lceil n/2 \rceil}{z}p^{2z} \bigpar{1-p^2}^{\lceil n/2 \rceil-z} \ge \Bigpar{\frac{np^2}{2z}}^{z} \e^{-np^2} = \e^{-\Theta(z\log \omega)} \ge n^{-o(1)} . 
\end{equation*}
Note that~$Z_i \ge z$ implies $X_{(x_i)} \ge \binom{z}{2} \ge \bigpar{z/2}^2 \ge (1+\eps) \mu$. 
Since the random variables~$Z_i$ depend on disjoint sets of independent edges, it then follows that 
\begin{equation*}
	\Pr(\max_{x \in [n]}X_{(x)} < (1+\eps)\mu) \le \Pr(\max_{1 \le i \le \lceil cn \rceil}\hspace{-0.25em}Z_i < z) = \hspace{-0.125em}\prod_{1 \le i \le \lceil cn \rceil} \hspace{-0.375em} \Pr(Z_i < z) \le \Bigpar{1-n^{-o(1)}}^{\lceil cn \rceil} = o(1) .
\end{equation*}
Hence~$\Pr(\max_{x \in [n]}X_{(x)} \ge (1+\eps)\mu \mid Y_{K_4} \ge cn ) = 1-o(1)$, 
which together with~\eqref{eq:XK4v} completes the~proof. 
\end{proof}

\pagebreak[3]

\section{Concluding remarks}\label{sec:conclusion}
The results and problems of this paper can also be viewed through the lens of \emph{extreme value theory}, 
where a standard goal is to show that a (suitably shifted and normalized) maximum converges to a non-degenerate distribution. 
To see the connection, note that the proof of \refT{thm_strictly_balanced}~(i) describes an interval on which~$\max_{\xx \in \osets} X_\xx$ is \whp{} concentrated. 
Our setting concerns discrete random variables (which can have complicated behaviour, cf.~\cite[Section 8.5]{FHR}), 
with a correlation structure that seems quite unusual for the field. 
Hence, as a first step, it would already be interesting to establish a `law of large numbers' result 
(even for a restricted class of~$(G,H)$, such as strictly balanced~ones), 
which is the content of the following~problem. 
\begin{problem}\label{prb:extreme_val}
Determine for what rooted graphs~$(G,H)$ and edge probabilities~$p = p(n)$ there is a sequence~$(a_n)$ of real positive numbers such that~${(\max_{\xx} X_{\xx} - \mu)/a_n}$ converges to~$1$ in probability (as~$n \to \infty$). 
\end{problem}
%

\bigskip{\noindent\bf Acknowledgements.}
We are grateful to the referees for helpful suggestions concerning the presentation.


\small
\bibliographystyle{plain}

\begin{thebibliography}{}

\bibitem{AS}
N.~Alon and J.~Spencer.
\newblock {\em The Probabilistic Method}, 4th~ed., John Wiley \& Sons, Inc., Hoboken, NJ (2016).

\bibitem{BK2019}
J.~Baron and J.~Kahn.
\newblock On the cycle space of a random graph. 
\newblock {\em Random Struct.\ Alg.} {\bf 54} (2019), 39--68.

\bibitem{BFL2015}
T.~Bohman, A.~Frieze, and E.~Lubetzky. 
\newblock Random triangle removal. 
\newblock {\em Adv.\ Math.} {\bf 280} (2015), 379--438.

\bibitem{bohman2010early}
T.~Bohman and P.~Keevash.
\newblock The early evolution of the {$H$}-free process.
\newblock {\em Invent.\ Math.} {\bf 181} (2010), 291--336.

\bibitem{bohman2013dynamic}
T.~Bohman and P.~Keevash.
\newblock Dynamic concentration of the triangle-free process.
\newblock {\em Random Struct.\ Alg.} {\bf 58} (2021), 221--293.

\bibitem{BW2018}
T.~Bohman and L.~Warnke.
\newblock Large girth approximate Steiner triple systems.
\newblock {\em J.\ London Math.\ Soc.} {\bf 100} (2019), 895--913.

\bibitem{BB}
B.~Bollob{\'a}s.
\newblock {\em Random Graphs}, 2nd~ed., Cambridge University Press (2001).


\bibitem{BS} 
R.~Boppana and J.~Spencer.
\newblock  A useful elementary correlation inequality. 
\newblock  {\em J.\ Combin.\ Theory Ser.\ A} {\bf 50} (1989), 305--307.

\bibitem{Ch19} 
S.~Chatterjee.
\newblock A general method for lower bounds on fluctuations of random variables. 
\newblock {\em Ann.\ Probab.} {\bf 47} (2019), 2140--2171.

\bibitem{FHR}
M.Falk, J.~H\"usler, and~R.~Reiss.
\newblock {\em Laws of small numbers: Extremes and rare events.} 3rd~ed., Birkh\"auser, Basel (2011).


\bibitem{pontiveros2013triangle}
G.~Fiz~Pontiveros, S.~Griffiths, and R.~Morris.
\newblock The triangle-free process and {$R(3,k)$}.
\newblock {\em Mem.\ Amer.\ Math.\ Soc.} {\bf 263} (2020), no.~1274. 

\bibitem{FK}
A.~Frieze and M.~Karo{\'n}ski.
\newblock {\em Introduction to Random Graphs}.
\newblock Cambridge University Press (2016).

\bibitem{Harris}
T.~Harris.
\newblock A lower bound for the critical probability in a certain percolation process.
\newblock {\em Proc.\ Cambridge Philos.\ Soc.} {\bf 56} (1960), 13--20.

\bibitem{J90}
S.~Janson.
\newblock Poisson approximation for large deviations.
\newblock {\em Random Struct.\ Alg.} {\bf 1} (1990), 221--229.

\bibitem{JLR}
S.~Janson, T.~{\L}uczak, and A.~Ruci{\'n}ski.
\newblock {\em Random Graphs}.
\newblock Wiley-Interscience, New York (2000).
  
\bibitem{JR2002}
S.~Janson and A.~Ruci\'{n}ski.
\newblock The infamous upper tail.
\newblock {\em Random Struct.\ Alg.} {\bf 20} (2002), 317--342.

\bibitem{JR11}
S.~Janson and A.~Ruci{\'n}ski.
\newblock Upper tails for counting objects in randomly induced subhypergraphs and rooted random graphs.
\newblock {\em Ark.\ Mat.} {\bf 49} (2011), 79--96.

\bibitem{JW}
S.~Janson and L.~Warnke.
\newblock {The lower tail: Poisson approximation revisited}.
\newblock {\em Random Struct.\ Alg.} {\bf 48} (2016), 219--246.

\bibitem{LP2010}
T.~{\L}uczak and P.~Pra\l{}at.
\newblock Chasing robbers on random graphs: zigzag theorem. 
\newblock {\em Random Struct.\ Alg.} {\bf 37} (2010), 516--524.

\bibitem{LS1991}
T.~{\L}uczak and J.~Spencer.
\newblock When does the zero-one law hold?
\newblock {\em J.\ Amer.\ Math.\ Soc.} {\bf 4} (1991), 451--468.

\bibitem{M2015}
T.~Makai. 
The reverse $H$-free process for strictly $2$-balanced graphs. 
\newblock {\em J.\ Graph Theory} {\bf 79} (2015), 125--144.

\bibitem{N2017}
A.~Noever.
\newblock Online Ramsey games for more than two colors.
\newblock {\em Random Struct.\ Alg.} {\bf 50} (2017), 464--492.

\bibitem{NS2017}
A.~Noever and A.~Steger.
\newblock Local resilience for squares of almost spanning cycles in sparse random graphs.
\newblock {\em Electron.\ J.\ Combin.} {\bf 24} (2017),~Paper~4.8, 15~pp.

\bibitem{R92}
A.~Ruci\'nski. 
\newblock Matching and covering the vertices of a random graph by copies of a given graph.
\newblock {\em Discrete Math.} {\bf 105} (1992), 185--197.

\bibitem{RiordanWarnke2015}
O.~Riordan and L.~Warnke.
\newblock The {J}anson inequalities for general up-sets.
\newblock {\em Random Struct.\ Alg.} {\bf 46} (2015), 391--395.

\bibitem{SS2018}
M.~Schacht and F.~Schulenburg.
\newblock Sharp thresholds for Ramsey properties of strictly balanced nearly bipartite graphs.
\newblock {\em Random Struct.\ Alg.} {\bf 52} (2018), 3--40.

\bibitem{SS1988}
S.~Shelah and J.~Spencer.
\newblock Zero-one laws for sparse random graphs.
\newblock {\em J.\ Amer.\ Math.\ Soc.} {\bf 1} (1988), 97--115.

\bibitem{Matas2012phd}
M.~{\v{S}}ileikis.
\newblock {\em Inequalities for Sums of Random Variables: a combinatorial perspective}.
\newblock PhD thesis, AMU Pozna\'n (2012). 
Available from \texttt{\detokenize{http://hdl.handle.net/10593/2870}}

\bibitem{SW18}
M.~\v{S}ileikis and L.~Warnke. 
\newblock A counterexample to the DeMarco-Kahn Upper Tail Conjecture.
\newblock {\em Random Struct.\ Alg.}, {\bf 55} (2019), 775--794.

\bibitem{S90a}
J.~Spencer.
\newblock Threshold functions for extension statements.
\newblock {\em J.\ Combin.\ Theory Ser.\ A} {\bf 53} (1990), 286--305.

\bibitem{S90b}
J.~Spencer.
\newblock Counting extensions.
\newblock {\em J.\ Combin.\ Theory Ser.\ A} {\bf 55} (1990), 247--255.

\bibitem{S2001}
J.~Spencer.
\newblock {\em The Strange Logic of Random Graphs.}
\newblock Springer-Verlag, Berlin (2001). 

\bibitem{ST2002}
J.~Spencer and G.~T\'{o}th.
\newblock Crossing numbers of random graphs.
\newblock {\em Random Struct.\ Alg.} {\bf 21} (2002), 347--358.

\bibitem{spohel2013general}
R.~Sp{\"o}hel, A.~Steger, and L.~Warnke.
\newblock General deletion lemmas via the Harris inequality.
\newblock {\em J.\ Comb.} {\bf 4} (2013), 251--271.

\bibitem{Vu2001}
V.~Vu.
\newblock A large deviation result on the number of small subgraphs of a random graph.
\newblock {\em Combin.\ Probab.\ Comput.} {\bf 10} (2001), 79--94.

\bibitem{TBD}
L.~Warnke.
\newblock On the method of typical bounded differences. 
\newblock {\em Combin.\ Probab.\ Comput.} {\bf 25} (2016), 269--299.

\bibitem{W14}
L.~Warnke.
\newblock {Upper tails for arithmetic progressions in random subsets}.
\newblock {\em Israel J.\ Math.} {\bf 221} (2017), 317--365.

\bibitem{WUT}
L.~Warnke.
\newblock On the missing log in upper tail estimates. 
\newblock {\em J.\ Combin.\ Theory Ser.\ B} {\bf 140} (2020), 98--146.

\bibitem{YR2007}
B.~Ycart and J.~Ratsaby.
\newblock The VC dimension of $k$-uniform random hypergraphs.
\newblock {\em Random Struct.\ Alg.} {\bf 30 }(2007), 564--572.

\end{thebibliography}

\normalsize


\begin{appendix}

\section{Appendix: Proof of \refT{thm_generaltail}}\label{apx:general} 
Our proof of \refT{thm_generaltail} from \refS{s_prelim} hinges on the following fairly routine claim (based on central moment estimates), 
where we write~$X_\xx = X_{G,H}(\xx)$, as usual. 
Recall that~$\mu=\E X_\xx$ and~$\sigma^2 = \Var X_\xx$ do not depend on the particular choice of~$\xx$, 
and that~$\Phi= \Phi_{G,H}$ is defined in~\eqref{eq_PhiGH}. 
\begin{claim}\label{cl:mom}%
For any rooted graph~$(G,H)$, the following holds for all~$p=p(n) \in [0,1]$ and~$\xx \in \osets$:%
\begin{romenumerate}
\item\label{cl:mom:order}%
If~$\Phi = \Omega(1)$, then~$\E (X_\xx - \mu)^{m} =	O\bigpar{ (\mu^2/\Phi)^{m/2}}$ for any fixed integer~$m \ge 2$, where the implicit constant may only depend on~$m$,~$G$ and~$H$.
\item\label{cl:mom:asymp}%
If~$\Phi(1-p) \to \infty$, then~$\prob{|X_\xx - \mu| < \delta \sigma} \to \prob{|\eta| < \delta}$ for any fixed~$\delta \in(0,\infty)$, 
where~$\eta$ is a standard normal random variable. 
\end{romenumerate}%
\end{claim}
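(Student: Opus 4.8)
The plan is to write $X_\xx = \sum_\alpha I_\alpha$, where $\alpha$ ranges over the $N \asymp n^{\vGH}$ $(G,H)$-extensions of $\xx$ in $K_n$, $I_\alpha = \indic{\alpha \subseteq \Gnp}$, and $\E I_\alpha = p^{\eGH}$; putting $\bar I_\alpha := I_\alpha - \E I_\alpha$ we then have $X_\xx - \mu = \sum_\alpha \bar I_\alpha$. Given a tuple $(\alpha_1, \ldots, \alpha_m)$, let its \emph{overlap graph} on $[m]$ join $i$ and $j$ whenever $\alpha_i$ and $\alpha_j$ share an edge of $K_n$. Since $\bar I_\alpha$ has mean zero and is independent of those $\bar I_{\alpha'}$ with $\alpha'$ edge-disjoint from $\alpha$, the term $\E \prod_j \bar I_{\alpha_j}$ vanishes unless every component of the overlap graph has at least two vertices. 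Expanding $\E(X_\xx - \mu)^m = \sum_{(\alpha_1, \ldots, \alpha_m)} \E \prod_j \bar I_{\alpha_j}$ and grouping the surviving tuples according to the partition of $[m]$ into overlap components, both parts of the claim reduce to a density estimate plus essentially standard bookkeeping.

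\emph{Part~\ref{cl:mom:order}.} First I would prove, by induction on $\ell$, that any edge-overlap-connected family of $\ell \ge 1$ extensions of $\xx$ whose union has $v$ vertices and $e$ edges satisfies $n^{v - v_G} p^e \le C_\ell\, \mu^\ell/\Phi^{\ell-1}$: deleting a non-cut extension $\alpha$ from the family, either $\alpha$ adds no new vertex or edge, or it meets the rest in a copy of $J_G$ for some $G \subseteq J \subsetneq H$ and hence adds at most $\vr JH$ vertices and at least $\er JH$ edges, contributing a factor $n^{\vr JH} p^{\er JH} \asymp \mu/\mu_{G,J} \le \mu/\Phi$ (using $\mu_{G,J}, \mu \ge \Phi$ by~\eqref{eq_PhiGH}, \eqref{def:muGH}, and $\Phi = \Omega(1)$). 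Since $\Phi = \Omega(1)$, for $\ell \ge 2$ this yields $n^{v-v_G} p^e = O_\ell\bigpar{(\mu^2/\Phi)^{\ell/2}}$. Combining this with the elementary bound $\bigabs{\E \prod_{j \in B} \bar I_{\alpha_j}} \le 2^{|B|} p^{|E_B|}$, where $E_B := \bigcup_{j\in B} E_{\alpha_j}$ (obtained by expanding the product, since every resulting term dominates $p^{|E_B|}$), each component $B$ of size $\ell_B \ge 2$ contributes $O\bigpar{(\mu^2/\Phi)^{\ell_B/2}}$ to $\E(X_\xx - \mu)^m$ after summing over the $O(1)$ abstract structures and their at most $n^{v-v_G}$ placements. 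Multiplying over the $O(1)$ components of a partition and summing over the $O(1)$ partitions of $[m]$ then gives $\E(X_\xx - \mu)^m = O_{m,G,H}\bigpar{(\mu^2/\Phi)^{m/2}}$.

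\emph{Part~\ref{cl:mom:asymp}.} Here I would normalise $W := (X_\xx - \mu)/\sigma$ and establish $\E W^m \to \E \eta^m$ for every fixed integer $m$, which — since the normal law is determined by its moments — implies that $W$ converges in distribution to $\eta$, and hence $\prob{|W| < \delta} \to \prob{|\eta| < \delta}$ for each fixed $\delta \in (0,\infty)$, as $\pm\delta$ are continuity points of the law of $|\eta|$. This requires a refinement of the density estimate that also tracks the factor $1-p$ gained from the mean-zero indicators: a connected family of $\ell \ge 2$ extensions contributes $O\bigpar{(1-p)^{\ell-1}\mu^\ell/\Phi^{\ell-1}} = O\bigpar{\sigma^\ell\,((1-p)/\Phi)^{\ell/2-1}}$, using $\sigma^2 \asymp (1-p)\mu^2/\Phi$ from~\eqref{eq:Variance}. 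The hypothesis $\Phi(1-p)\to\infty$ forces $(1-p)/\Phi \to 0$ (if $1-p \ge c > 0$ then $\Phi \to \infty$; if $1-p \to 0$ then $\Phi \gg 1/(1-p)$, so $(1-p)/\Phi \le (1-p)^2 \to 0$). Thus every overlap component of size $\ge 3$ contributes $o(\sigma^{\ell_B})$, so in the limit only partitions of $[m]$ into blocks of size exactly $2$ survive — none if $m$ is odd, and $(m-1)!!$ perfect matchings if $m$ is even. For such a matching, the contributions of distinct pairs are asymptotically independent (configurations in which two pairs share an edge are again the negligible ``size $\ge 3$'' ones), and each pair contributes $\sigma^2(1+o(1))$ by the definition $\sigma^2 = \E(X_\xx - \mu)^2$; hence $\E W^m \to (m-1)!! = \E\eta^m$ for even $m$ and $\E W^m \to 0$ for odd $m$.

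The step I expect to be the main obstacle is the combinatorial bookkeeping in Part~\ref{cl:mom:asymp}: organising the moment expansion so that the perfect-matching partitions are seen both to dominate and to factorise through~\eqref{eq:Variance}, while all remaining partitions (and the diagonal and multiple-overlap corrections inside a matching) are controlled by the $(1-p)$-refined density estimate. All of this closely mirrors the classical method-of-moments proof of asymptotic normality for unrooted subgraph counts, which is why the claim is essentially routine.
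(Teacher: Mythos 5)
Your Part~\ref{cl:mom:order} is fine and is in the same spirit as the paper's argument (the paper simply invokes the rooted analogue of the textbook central-moment estimate from Janson--{\L}uczak--Ruci\'nski and reads off the bound; you reconstruct that estimate via the overlap-component decomposition, and your induction giving $n^{v-v_G}p^e=O_\ell(\mu^\ell/\Phi^{\ell-1})$ for a connected $\ell$-family is correct, using $\mu_{G,J}\ge\Phi$ whenever the deleted extension shares at least one edge with the rest).

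Part~\ref{cl:mom:asymp}, however, rests on a false intermediate estimate: a connected family of $\ell$ extensions does \emph{not} in general contribute $O\bigl((1-p)^{\ell-1}\mu^\ell/\Phi^{\ell-1}\bigr)$. Concretely, take $\ell=3$ extensions $\alpha,\beta,\gamma$ with edge sets $\{f,a\},\{f,b\},\{f,c\}$ sharing the single edge $f$; a direct computation gives $\E[\bar I_\alpha\bar I_\beta\bar I_\gamma]=p^4(1-p)(1-2p)$, i.e.\ only \emph{one} factor of $1-p$, not two (the diagonal tuples $\alpha=\beta=\gamma$, which you must also handle since the moment expansion runs over tuples with repetitions, give $\E\bar I_\alpha^3=q(1-q)(1-2q)$ with $q=p^{e_H-e_G}$ and exhibit the same failure). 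Aggregated over placements this configuration contributes $\Theta\bigl((1-p)\mu^3/\mu_{G,J}^2\bigr)$, which exceeds your claimed $(1-p)^2\mu^3/\Phi^2$ by a factor $1/(1-p)\to\infty$ in the admissible regime $p\to1$, $\Phi(1-p)\to\infty$. The correct statement (which is what underlies the paper's displayed formula~\eqref{eq:centralmoment}) carries a single factor $1-p$ per connected block, i.e.\ a block of size $\ell_B$ contributes $O\bigl((1-p)\mu^{\ell_B}/\Phi^{\ell_B-1}\bigr)=O\bigl(\sigma^{\ell_B}\,((1-p)\Phi)^{1-\ell_B/2}\bigr)$; this still makes blocks of size $\ge3$ negligible, but via $(1-p)\Phi\to\infty$ rather than your $(1-p)/\Phi\to0$. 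Extracting even that one factor of $1-p$ from a connected block (e.g.\ by writing $\E\prod_{j\in B}\bar I_{\alpha_j}$ as a covariance of two increasing functions, or by the telescoping argument in the JLR proof) is precisely the non-routine bookkeeping that your proposal flags as the main obstacle and then omits, so as written the proof of~\ref{cl:mom:asymp} does not go through.
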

\begin{proof}[Proof of Claim~\ref{cl:mom}]%
Recalling the variance estimate~\eqref{eq:Variance} 
and the definition~\eqref{eq_PhiGH} of~$\Phi$, a straightforward extension of the textbook proof of~\cite[Theorem~6.5 and Remark~6.6]{JLR} for (unrooted) subgraph counts~yields 
\begin{equation}\label{eq:centralmoment}
	\E (X_\xx - \mu)^{m} = \indic{\text{$m$ even}}(m-1)!!\sigma^{m}(1+o(1))  + O\Bigpar{\sum_{1 \le \ell < m/2} \sigma^{m}\bigpar{\Phi(1-p)}^{\ell-m/2}},
\end{equation}
where~$(m-1)!! = (m-1)\cdot(m-3) \cdots 1$ when~$m$ is even.
As~$\ell-m/2 < 0$ the sum in~\eqref{eq:centralmoment} is~$o(\sigma^m)$ when~$\Phi(1-p) \to \infty$. 
Hence~$\E(X_\xx-\mu)^m/\sigma^m \to \indic{m\text{ even}}(m-1)!!$, 
so the method of moments (see, e.g.,~\cite[Corollary~6.3]{JLR}) 
implies that~$(X_\xx - \mu)/\sigma$ converges to~$\eta$ in distribution, 
which implies~\ref{cl:mom:asymp}.

Turning to~\ref{cl:mom:order}, from~\eqref{eq:Variance} and~$\Phi = \Omega(1)$ 
we infer that in~\eqref{eq:centralmoment} we have~$\sigma^m = O\bigpar{\mu^m /\Phi^{m/2}}$ and 
\[
\sigma^{m} \bigl(\Phi(1-p)\bigr)^{\ell-m/2} \asymp \mu^{m}/\Phi^{m/2} \cdot \Phi^{\ell-m/2} (1-p)^{\ell}  = O\bigpar{\mu^{m}/\Phi^{m/2}}, 
\]
which shows that \eqref{eq:centralmoment} implies~\ref{cl:mom:order}.
\end{proof}
\begin{proof}[Proof of \refT{thm_generaltail}]%
For~\ref{thm_tail1} we may assume that in our lower bound on~$t$ we have~$(t/\mu)^2 \Phi \ge n^{c}$ for sufficiently large~$n$, where~$c>0$ is a constant.   
Fix an arbitrary constant~$\tau > 0$, and set~$m := \lceil (v_G+\tau+1)/c\rceil$.
Using first a union bound, next Markov's inequality, and finally~\refCl{cl:mom}~\ref{cl:mom:order},  
it then readily follows~that 
\begin{equation*}
	\prob{\max_{\xx \in \osets} |X_\xx - \mu| \ge t} \le \sum_{\xx \in \osets}\frac{\E (X_\xx - \mu)^{2m}}{ t^{2m} } 
	\le O(n^{v_G}) \cdot \biggl(\frac{\mu^2}{t^2\Phi}\biggr)^m 
	= o(n^{-\tau }).
\end{equation*}

Turning to~\ref{thm_tail0} we fix~$\xx = (1, \dots, v_G)$, say, and claim that~$|X_\xx - \mu| \ge \eps \mu$~\whp{}.
In~case~(a) we fix~$\delta > 0$. Combining the variance estimate~\eqref{eq:Variance} with our assumption~$\eps^2 \Phi/(1-p) \to 0$, we infer for~$n$ large enough that 
\[
\eps \mu/\sigma  = \sqrt{\eps^2 \mu^2/\sigma^2} \asymp \sqrt{\eps^2 \Phi/(1-p)} \ll \delta .
\]
Together with \refCl{cl:mom}~\ref{cl:mom:asymp}, it follows that 
\[
	\limsup_{n \to \infty }\prob{|X_\xx - \mu | < \eps \mu} \le \prob{|\eta| < \delta}.
\]
Since~$\delta > 0$ was arbitrary, now the basic fact~$\lim_{\delta \to 0}\prob{|\eta| \le \delta} \to 0$ completes the proof in case~(a).
In the remaining~case~(b), after recalling~$X_\xx = X_{G,H}(\xx)$, then Markov's inequality readily implies 
\begin{equation*}
	\Pr(X_\xx \ge 1) \le \min_{G \subsetneq J \subseteq H: e_J > e_G}\Pr(X_{G,J}(\xx) \ge 1) \le \Phi_{G,H} \to 0,
\end{equation*}
so that~\whp{}~$|X_\xx - \mu| = \mu \ge \eps \mu$, completing the proof. 
\end{proof}

\section{Appendix: Proof of \refL{lem:scattered}}\label{apx:scattered}
Recalling the notation and definitions for unrooted graphs introduced at the beginning of Section~\ref{sec:ext:1}, 
\refL{lem:scattered} is implied by claim~\ref{eq:sc:v} of the following more general auxiliary result, 
whose technical statement is optimized for ease of the~proofs (which are partially inspired by~\cite[Lemma~4~and~7]{S90a}). 
\begin{lemma}\label{lem:scattered:technical}
Let~$K$ be a balanced graph with~$e_K \ge 1$. 
There are constants~$\beta, B, C > 0$ such that,  
for all~$p=p(n) \in [0,1]$ with~$n^{-1/d_K} \ll p = O(n^{\beta - 1/d_K})$, 
the following holds \whp{} in~$\Gnp$, writing~$\lambda := np^{d_K}$:
\begin{romenumerate}
		\item\label{eq:sc:i}
		If $G \subseteq K$ is primal for~$K$, then any two copies of $G$ are either vertex-disjoint or their intersection is isomorphic to a primal subgraph of $K$.
		\item\label{eq:sc:ii}
		If $G_0 \subsetneq G_1$ are both primal for~$K$ and there is no third primal $F$ such that $G_0 \subsetneq F \subsetneq G_1$, then,   
		for every copy~$G_0'$ of~$G_0$, all copies of~$G_1$ that contain~$G_0'$ are vertex-disjoint outside of $V(G_0')$. 
		\item\label{eq:sc:iii}
If~$G_0, G_1$ are as in \ref{eq:sc:ii}, then every copy of~$G_0$ is contained in at most~$B \lambda^{v_{G_1} - v_{G_0}}$ copies of~$G_1$. 
		\item\label{eq:sc:iv}
			If~$v \in V(K)$ and $G^{(v)} \subseteq K$ is a minimal primal subgraph of~$K$ containing~$v$, then for each vertex~$x \in [n]$ there is at most one $(v,G^{(v)})$-extension of $x$.
		\item\label{eq:sc:v}
		If~$G_{\min} \subseteq K$ is primal for~$K$ with the smallest number of vertices, then every vertex~$x \in [n]$ is contained in at most~$C\lambda^{\vr{G_{\min}}{K}}$ copies of~$K$. 
\end{romenumerate}
\end{lemma}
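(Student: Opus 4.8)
The plan is to prove the five assertions in the order (i), (ii), (iii), (iv), (v): deduce (ii) and (iv) from (i), then (iii) from (ii), and finally assemble (v) from (iii) and (iv). The arguments are in the spirit of \cite[Lemmas~4~and~7]{S90a}, and throughout use only two elementary consequences of $n^{-1/d_K} \ll p = O(n^{\beta - 1/d_K})$: that $\lambda = np^{d_K} \to \infty$ (possibly arbitrarily slowly), and that $\lambda = O(n^{\beta d_K})$ is at most a small polynomial in~$n$; the freedom to shrink the constant $\beta > 0$ will be exploited repeatedly, as will the fact that $K$ is balanced, so that $e_J \le d_K v_J$ for all $J \subseteq K$ with equality exactly for the primal subgraphs.

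For~(i) I would use a routine first-moment bound. The intersection of two distinct copies of a primal $G \subseteq K$ is isomorphic to some subgraph $M$ of $G$; if $M$ is \emph{not} primal for $K$ (equivalently $e_M < d_K v_M$, or $M$ spans all $v_G$ vertices with fewer than $e_G$ edges), then the expected number of ordered pairs of copies of $G$ in $\Gnp$ meeting exactly in a copy of $M$ is of order $n^{2v_G - v_M}p^{2e_G - e_M}$, which — using $e_G = d_K v_G$ — rewrites as $\lambda^{2v_G - v_M}\cdot p^{d_K v_M - e_M}$. Since $d_K v_M - e_M$ is a positive quantity bounded away from~$0$ over the finitely many relevant~$M$, while $\lambda^{2v_G - v_M}$ is at most a small power of~$n$, a sufficiently small choice of $\beta$ makes each such expectation $n^{-\Omega(1)}$; a union bound over all such~$M$ then shows that whp every nonempty intersection of two copies of $G$ has density $d_K$, hence is a primal subgraph of $K$. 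All subsequent claims are then argued on the whp event that~(i) holds.

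For~(ii): if two copies $C_1 \ne C_2$ of $G_1$ both extend a copy $G_0'$ of $G_0$ and share a vertex $w \notin V(G_0')$, then $C_1 \cap C_2 \supseteq G_0' \cup \{w\}$ has more than $v_{G_0}$ vertices, and it has fewer than $v_{G_1}$ vertices, since otherwise $C_1 \cap C_2$ and $C_1$ would share all $v_{G_1}$ vertices and — both being of density $d_K$ by~(i) — all edges, giving $C_1 = C_2$. By~(i) the graph $C_1 \cap C_2$ is isomorphic to a primal subgraph of~$K$, and transporting it through the extension isomorphism $C_1 \to G_1$ (which by definition carries $G_0'$ onto the designated $G_0 \subseteq G_1$) produces a primal subgraph $F$ with $G_0 \subsetneq F \subsetneq G_1$, contradicting the hypothesis of~(ii). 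Claim~(iv) is analogous: two copies of $G^{(v)}$ in which $v$ plays the role of the same vertex $x$ intersect in a nonempty graph which, by~(i) and the minimality of $G^{(v)}$, must equal $G^{(v)}$, forcing the two copies to coincide. For~(iii), by~(ii) the copies of $G_1$ extending a fixed copy $G_0'$ of $G_0$ are pairwise disjoint outside $V(G_0')$, hence pairwise edge-disjoint there; consequently the probability that $G_0'$ is present together with some unordered collection of $t$ such extensions is at most $\lambda^{v_{G_0}}\cdot (C'\lambda^{v_{G_1}-v_{G_0}})^t/t!$ for a constant $C'$, using $e_{G_1}-e_{G_0} = d_K(v_{G_1}-v_{G_0})$ and $n^{v_{G_0}}p^{e_{G_0}} \asymp \lambda^{v_{G_0}}$. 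Choosing $t := \lceil B\lambda^{v_{G_1}-v_{G_0}}\rceil + 1$ with $B$ a large enough constant bounds this by $\lambda^{v_{G_0}}(eC'/B)^t \le \lambda^{v_{G_0}}\,2^{-B\lambda}$, which tends to~$0$ since $v_{G_1}-v_{G_0}\ge 1$ and $\lambda \to \infty$; hence whp no copy of $G_0$ lies in more than $t \le (B+2)\lambda^{v_{G_1}-v_{G_0}}$ copies of $G_1$.

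Finally, for~(v) I would sum over the at most $v_K$ roles $v \in V(K)$ that a vertex $x$ can play in a copy of $K$. By~(iv) there is at most one copy $C_0$ of $G^{(v)}$ with $v$ at $x$, and every copy of $K$ with $x$ in role $v$ contains $C_0$; fixing a maximal chain of primal subgraphs $G^{(v)} = F_0 \subsetneq \cdots \subsetneq F_r = K$ (so consecutive links have no primal strictly between, and~(iii) applies to each), a count of chains of sub-copies bounds the number of copies of $K$ containing $C_0$ by $\prod_{i}B\lambda^{v_{F_{i+1}}-v_{F_i}} = B^r\lambda^{v_K - v_{G^{(v)}}} \le B^r\lambda^{v_K - v_{G_{\min}}}$, using $v_{G_{\min}}\le v_{G^{(v)}}$ and $\lambda \ge 1$; summing over $v$ gives the claim for a suitable $C$. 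I expect the main obstacle to be~(i): it is the only genuinely probabilistic ingredient, and while conceptually standard, it needs a careful enumeration of the possible overlap types and a tuning of $\beta$ that kills every non-primal overlap. A secondary subtlety is that~(iii) must hold uniformly in $p$ even when $\lambda \to \infty$ more slowly than any power of $\log n$, so a Chernoff bound plus a union bound over the $\approx \lambda^{v_{G_0}}$ copies of $G_0$ is too weak; instead one controls the first moment of the number of offending collections directly and lets the factorial $t!$ absorb the polynomial factor $\lambda^{v_{G_0}}$.
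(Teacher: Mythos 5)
Your proposal is correct and follows essentially the same route as the paper's proof: a first-moment/union bound over overlap types for~(i), deterministic deductions of~(ii) and~(iv) from~(i), a first-moment bound on disjoint families of extensions (valid by~(ii)) for~(iii), and a chain of primal subgraphs combining~(iii) and~(iv) for~(v). The only differences are cosmetic: in~(i) the paper phrases the computation as $d_{G_1\cup G_2}>d_K$ rather than via $\lambda^{2v_G-v_M}p^{d_Kv_M-e_M}$, and in~(iii) the paper uses $\binom{x}{z}\le(\e x/z)^z$ where you use the equivalent $t!$ normalization.
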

\begin{proof}
\ref{eq:sc:i}: 
Fix a graph~$U := G_1 \cup G_2$ that is formed by the union of some two distinct overlapping copies~$G_1,G_2$ of~$G$. 
Since there are only finitely many such graphs, it is enough to show that $\Gnp$ \whp{} does not contain a copy of~$U$ when the intersection~$I := G_1 \cap G_2$ is not isomorphic to a primal subgraph of~$K$. 
Noting~$e_U = 2e_G - e_I$ and~$v_U = 2v_G - v_I$, using that~$I$ is not primal, i.e., $d_I < d_K = d_G$, it follows~that 
\[
  d_U = \frac{e_U}{v_U} = \frac{2e_G - e_I}{2v_G - v_I} = \frac{2v_Gd_G - v_Id_I}{2v_G - v_I} > d_G . 
\]
Since~$p =O(n^{\beta - 1/d_K}) \ll n^{-1/d_U}$ for~$\beta>0$ small enough, 
using $\mu_U \asymp n^{v_U}p^{e_U} = (n p^{d_U})^{v_U} \ll 1$ 
and Markov's inequality it readily follows that~$\Gnp$ \whp{} does not contain a copy of~$U$.

\ref{eq:sc:ii}: 
By~\ref{eq:sc:i} any two distinct copies of~$G_1$ that contain the same copy of~$G_0$ must intersect in a subgraph isomorphic to some primal~$J$ with $G_0 \subseteq J \subsetneq G_1$. 
The assumed properties of $G_0,G_1$ imply that~$J$ cannot contain~$G_0$ properly. 
Hence~$J = G_0$, which implies that all copies of~$G_1$ are vertex-disjoint outside~$V(G_0')$.

\ref{eq:sc:iii}: 
In $K_n$, for each a copy~$G_0'$ of~$G_0$ the number of copies of $G_1$ containing $G_0'$ is at most~${\big\lfloor An^{\vr{G_0}{G_1}}\bigr\rfloor}$ for some constant~$A = A(K) \ge 1$. 
Defining~$B := \e^2A$, let~$\cE_{G_0'}$ denote the event that there are at least ${z:=\big\lceil B \lambda^{v_{G_1} - v_{G_0}}\big\rceil}$ copies of~$G_1$ that (a)~contain~$G_0'$ and (b)~are vertex-disjoint outside of~$V(G_0')$. 
Since the copies share no edges other than~$E(G_0')$, a standard union bound argument yields
\begin{equation*}
\begin{split}
	\prob{\cE_{G_0'}} \; \le \; \binom{\big\lfloor An^{\vr{G_0}{G_1}}\bigr\rfloor}{z} p^{e_{G_0} + (\er{G_0}{G_1})z} . 
\end{split}
\end{equation*}
Since $K$ is balanced and $G_0$, $G_1$ are its primal subgraphs, we see that~$d_{G_0} = d_{G_1} = d_K$,  
from which it routinely follows that 
\[
	d(G_0,G_1) = \frac{e_{G_1}-e_{G_0}}{v_{G_1}-v_{G_0}} = \frac{v_{G_1}d_{K} - v_{G_0}d_{K}}{v_{G_1} - v_{G_0}} = d_{K}.
\]
Hence~$n^{\vr{G_0}{G_1}}p^{\er{G_0}{G_1}} = (np^{d(G_0,G_1)})^{\vr{G_0}{G_1}} = \lambda^{\vr{G_0}{G_1}}$. 
Together with $\binom{x}{z} \le (\e x/z)^z$ and the definition of~$z$, it then follows that, say, 
\begin{equation*}
\begin{split}
	\prob{\cE_{G_0'}} & \le p^{e_{G_0}} \cdot \bigl(\e A \lambda^{\vr{G_0}{G_1}}/z\bigr)^z \le p^{e_{G_0}} \cdot \exp\bigl(-\lambda^{\vr{G_0}{G_1}}\bigr) .
\end{split}
\end{equation*}
Using~$d_{G_0} = d_K$, we also obtain~$n^{v_{G_0}}p^{e_{G_0}} = (np^{d_{G_0}})^{v_{G_0}} = \lambda^{v_{G_0}}$. 
Noting that~$\lambda \to \infty$ as~$n \to \infty$, 
by summing over all possible copies~$G_0'$ (there are at most~$n^{v_{G_0}}$ of them) it then follows that 
\[
\sum_{G_0'} \prob{\cE_{G_0'}} \le 
n^{v_{G_0}} p^{e_{G_0}}
\cdot \exp \bigl(-\lambda^{\vr{G_0}{G_1}}\bigr) \le \lambda^{v_{G_0}} \cdot \exp \bigl(- \lambda \bigr) \to 0 .
\]
Now~\ref{eq:sc:iii} follows readily by a union bound argument and~\ref{eq:sc:ii}.

\ref{eq:sc:iv}: 
Given~$x \in [n]$, let~$G', G''$ be two $(v,G^{(v)})$-extensions of~$x$. Since~$G', G''$ are both copies of a primal subgraph~$G^{(v)}$, by~\ref{eq:sc:i} we have that~$G' \cap G''$ is a copy of a primal subgraph~$J \subseteq G^{(v)}$. Since each of the two isomorphisms mapping~$G^{(v)}$ to~$G'$ and $G''$ maps~$v$ to~$x$, we infer that~$v \in V(J)$. But since~$G^{(v)}$ is minimal among primals containing~$v$, it follows that~$J = G^{(v)}$ and thus~$G' = G''$.

\ref{eq:sc:v}:
Given $v \in V(K)$, set $G_0 := G^{(v)}$ and choose a maximal chain 
\[
G^{(v)}= G_0 \subsetneq G_1 \subsetneq \dots \subsetneq G_{\ell} = K 
\]
of primal subgraphs of~$K$, with the property that for any~$i$ there is no primal~$F$ for~$K$ satisfying $G_i \subsetneq F \subsetneq G_{i + 1}$ 
(to clarify: since~$K$ is balanced and thus primal, such maximal chains always exist). 
For each $(v,K)$-extension of~$x$ we can select a unique sequence of copies 
\[
x \in G_0' \subsetneq G_1' \subsetneq \dots \subsetneq G_{\ell}' 
\]
such that~$G_0'$ is an $(v,G_0)$-extension of~$x$, and that, for each $i \in [\ell]$, $G_i'$ is a copy of $G_i$. 
Hence it is enough to bound the number of such sequences, assuming~\ref{eq:sc:iii} and~\ref{eq:sc:iv}. 
By~\ref{eq:sc:iv} there is at most one choice for~$G_0'$, 
and, given~$G_{i-1}'$ with~$i \in [\ell]$, by~\ref{eq:sc:iii} there are at most~$B\lambda^{\vr{G_{i-1}}{ G_i}}$ choices for suitable~$G_{i}'$.  
Multiplying these bounds, we obtain
 \begin{equation*}
	 \max_{x \in [n]} X_{v,K}(x) \le \prod_{i \in [\ell]} B \lambda^{\vr{G_{i-1}}{G_i}} \asymp \lambda^{\sum_{i \in [\ell]}[\vr{G_{i-1}}{G_i}]} = \lambda^{\vr{G_0}{G_\ell}} = \lambda^{\vr{G^{(v)}}{K}}.
 \end{equation*}
Summing over all~$v \in V(K)$, using~$\vr{G^{(v)}}{K} \le \vr{G_{\min}}{K}$ and~$\lambda \gg 1$ it follows that, for some $C = C(K)>0$, each vertex is contained in at most~$C\lambda^{\vr{G_{\min}}{K}}$ copies of~$K$, completing the proof of~\refL{lem:scattered:technical}.  
\end{proof}

\end{appendix}

\end{document}